\title{Decomposition of De Rham Complexes with Smooth Horizontal
Coefficients for Semistable Reductions \footnote{This paper was
partially supported by the National Natural Science Foundation
of China (Grant No.\ 10901037), the Ph.D.\ Programs Foundation
of Ministry of Education of China (Grant No.\ 20090071120004)
and the Scientific Research Foundation for the Returned
Overseas Chinese Scholars, State Education Ministry.}}
\author{Qihong Xie}
\date{}
\theoremstyle{plain}
\newtheorem{prop}{Proposition}[section]
\newtheorem{lem}[prop]{Lemma}
\newtheorem{thm}[prop]{Theorem}
\newtheorem{cor}[prop]{Corollary}
\newtheorem{conj}[prop]{Conjecture}
\theoremstyle{definition}
\newtheorem{defn}[prop]{Definition}
\newtheorem{asp}[prop]{Assumption}
\newtheorem*{nota}{Notation}
\newtheorem*{ack}{Acknowledgments}
\theoremstyle{remark}
\newtheorem{rem}[prop]{Remark}
\newcommand{\Q}{\mathbb Q}
\newcommand{\Z}{\mathbb Z}
\newcommand{\F}{\mathbb F}
\newcommand{\A}{\mathbb A}
\newcommand{\PP}{\mathbb P}
\newcommand{\OO}{\mathcal O}
\newcommand{\MM}{\mathcal M}
\newcommand{\HH}{\mathcal H}
\newcommand{\LL}{\mathcal L}
\newcommand{\ZZ}{\mathcal Z}
\newcommand{\UU}{\mathcal U}
\newcommand{\CC}{\mathcal C}
\newcommand{\Supp}{\mathop{\rm Supp}\nolimits}
\newcommand{\Ext}{\mathop{\rm Ext}\nolimits}
\newcommand{\ch}{\mathop{\rm char}\nolimits}
\newcommand{\Spec}{\mathop{\rm Spec}\nolimits}
\newcommand{\im}{\mathop{\rm Im}\nolimits}
\newcommand{\divisor}{\mathop{\rm div}\nolimits}
\newcommand{\gr}{\mathop{\rm gr}\nolimits}
\newcommand{\Fil}{\mathop{\rm Fil}\nolimits}
\newcommand{\sgn}{\mathop{\rm sgn}\nolimits}
\newcommand{\ra}{\rightarrow}
\newcommand{\wt}{\widetilde}
\newcommand{\bR}{{\bf R}}
\newcommand{\cH}{{\bf H}}
\newcommand{\bH}{\mathbb H}
\numberwithin{equation}{section}
\begin{document}
\maketitle

\begin{abstract}
We generalize Illusie's result to prove the decomposition of the de
Rham complex with smooth horizontal coefficients for a semistable
$S$-morphism $f:X\ra Y$ which is liftable over $\Z/p^2\Z$.
As an application, we prove the Koll\'ar vanishing theorem in positive
characteristic for a semistable $S$-morphism $f:X\ra Y$ which is liftable
over $\Z/p^2\Z$, where all concerned horizontal divisors are smooth over $Y$.
\end{abstract}

\setcounter{section}{0}
\section{Introduction}\label{S0}

The decomposition of de Rham complexes is one of the most important
results in algebraic geometry of positive characteristic, which has
been discovered by Deligne and Illusie \cite{di} and successfully
used to give a purely algebraic proof of the Kodaira vanishing
theorem. More precisely, let $k$ be a perfect field of characteristic
$p>0$, and $W_2(k)$ the ring of Witt vectors of length two of $k$.
Let $S$ be a $k$-scheme, $\wt{S}$ a lifting of $S$ over $W_2(k)$,
$X$ a smooth $S$-scheme, and $F:X\ra X_1$ the relative Frobenius
morphism of $X$ over $S$. If $X$ has a lifting $\wt{X}$ over $\wt{S}$
and $\dim(X/S)<p$, then we have a decomposition in $D(X_1)$:
\begin{eqnarray*}
\bigoplus_i\Omega^i_{X_1/S}[-i]\stackrel{\sim}{\ra} F_*\Omega^\bullet
_{X/S}.
\end{eqnarray*}

Later, Illusie \cite{il90} has generalized the above result to the
relative case for a semistable $S$-morphism $f:X\ra Y$ to obtain
the decomposition of de Rham complexes with coefficients in the
Gauss-Manin systems. Roughly speaking, let $E$ be the branch divisor
of $f$, $D=X\times_Y E$, and $\bH=\oplus_i\bR^if_*\Omega^\bullet
_{X/Y}(\log D/E)$ the Gauss-Manin system. If $f:X\ra Y$ has a lifting
$\wt{f}:\wt{X}\ra \wt{Y}$ over $\wt{S}$ and $\dim(X/S)<p$, then
we have a decomposition in $D(Y_1)$:
\begin{eqnarray*}
\bigoplus_i\gr^i\Omega^\bullet_{Y_1/S}(\log E_1)(\bH_1)\stackrel{\sim}
{\ra} F_*\Omega^\bullet_{Y/S}(\log E)(\bH).
\end{eqnarray*}

In this paper, we generalize Illusie's result to the case where
smooth horizontal coefficients are taken into account. Roughly speaking,
let $D$ be an adapted divisor on $X$, i.e.\ $D$ consists of three parts:
all singular fibers of $f$, some smooth fibers of $f$ and some smooth
horizontal divisors with respect to $f$ (see Definition \ref{1.2} for
more details). Let $\bH=\oplus_i\bR^if_*\Omega^\bullet_{X/Y}(\log D/
E_a)$. Then we prove that if $f:(X,D)\ra (Y,E_a)$ has a lifting
$\wt{f}:(\wt{X},\wt{D})\ra (\wt{Y},\wt{E}_a)$ and $\dim(X/S)<p$, then
there is a decomposition in $D(Y_1)$ (see Theorem \ref{5.9} for more
details):
\begin{eqnarray*}
\bigoplus_i\gr^i\Omega^\bullet_{Y_1/S}(\log E_{a1})(\bH_1)\stackrel
{\sim}{\ra} F_*\Omega^\bullet_{Y/S}(\log E_a)(\bH),
\end{eqnarray*}
from which follows the Koll\'ar vanishing theorem in positive
characteristic, i.e.\
\begin{eqnarray*}
H^i(Y,\LL\otimes R^jf_*\omega_{X/S}(D))=0
\end{eqnarray*}
holds for any $i>0,j\geq 0$ and any ample invertible sheaf $\LL$
on $Y$ (see Theorem \ref{6.3} for more details). It should be mentioned
that the proofs of all of the results in this paper follow Illusie's
arguments very closely.

In general, we may put forward the following conjecture, called the
logarithmic Koll\'ar vanishing for semistable reductions in positive
characteristic (see \cite[Theorem 10.19]{ko95} for the logarithmic
Koll\'ar vanishing theorem in characteristic zero):

\begin{conj}\label{0.1}
Let $X$ and $Y$ be proper and smooth $S$-schemes, $f:X\ra Y$ an
$E$-semistable $S$-morphism, and $D$ a simple normal crossing divisor
on $X$ containing the divisor $X\times_Y E$. Let $H$ be a $\Q$-divisor
on $X$ such that the support of the fractional part of $H$ is contained
in $D$ and $H\sim_\Q f^*L$, where $L$ is an ample $\Q$-divisor on $Y$.
Assume that $f:(X,D)\ra(Y,E)$ has a lifting $\wt{f}:(\wt{X},\wt{D})
\ra(\wt{Y},\wt{E})$ over $\wt{S}$ and $\dim(X/S)<p$.
Then $H^i(Y,R^jf_*\OO_X(K_{X/S}+\ulcorner H\urcorner))=0$ holds for
any $i>0$ and $j\geq 0$.
\end{conj}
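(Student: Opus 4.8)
The plan is to deduce Conjecture \ref{0.1} from the integral case already established in Theorem \ref{6.3} by means of a cyclic covering that absorbs the fractional part of $H$. Write $H=\lfloor H\rfloor+\{H\}$, where $\Supp\{H\}\subset D$. Since $H\sim_\Q f^*L$, choose an integer $N$, prime to $p$, with $NH\sim Nf^*L$ integral; this relation furnishes an invertible sheaf $\MM$ with $\MM^{\otimes N}\cong\OO_X(N\{H\})$ and a section cutting out $N\{H\}$, which is exactly the data needed to build a degree-$N$ cyclic cover $\pi:Z\ra X$ branched along $\Supp\{H\}$. After normalization, $Z$ carries a $\mu_N$-action, and the eigenspace decomposition of $\pi_*\OO_Z$ isolates, as the summand for a suitable character, precisely the round-up $\ulcorner H\urcorner$; the ample sheaf $\OO_Y(L)$ will reappear on $Y$ as the twist against which Theorem \ref{6.3} is applied.

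First I would verify that the composite $g=f\circ\pi:Z\ra Y$ is, after a compatible resolution, again $E'$-semistable for a suitable branch divisor $E'$ on $Y$, that the total transform $D_Z$ of $D$ is an adapted simple normal crossing divisor in the sense of Definition \ref{1.2}, and --- crucially --- that the whole diagram $g:(Z,D_Z)\ra(Y,E')$ inherits a lifting $\wt g:(\wt Z,\wt D_Z)\ra(\wt Y,\wt E')$ over $\wt S$ from $\wt f$. Here one must take $N$ small enough that $\dim(Z/S)=\dim(X/S)<p$ is preserved, so that all hypotheses of Theorem \ref{6.3} hold on $Z$.

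Next I would apply Theorem \ref{6.3} to $g:(Z,D_Z)\ra(Y,E')$, obtaining $H^i(Y,\LL\otimes R^jg_*\omega_{Z/S}(D_Z))=0$ for all $i>0$, $j\ge 0$ and every ample $\LL$ on $Y$. The final step is descent: using $Rg_*=Rf_*\circ R\pi_*$ with the projection formula and the $\mu_N$-equivariant splitting of $\pi_*\omega_{Z/S}(D_Z)$ into eigensheaves --- one of which is $\OO_X(K_{X/S}+\ulcorner H\urcorner)$, the round-up $\ulcorner\cdot\urcorner$ arising from the normalization of the cover --- I would realize $R^jf_*\OO_X(K_{X/S}+\ulcorner H\urcorner)$ as a direct summand of $R^jg_*\omega_{Z/S}(D_Z)$. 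Choosing $\LL$ to cancel the twist by $\MM^{-1}$ coming from $L$ then yields the asserted vanishing on $Y$.

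The main obstacle is the liftability and semistability of the cyclic cover. In characteristic zero one freely resolves the normalized cover and invokes functoriality of the lifting, but here one must construct $\wt Z$ over $\wt S$ compatibly with $\wt f$ while keeping the branch and ramification divisors simple normal crossing and in good position relative to the fibers of $f$. This is precisely where a general simple normal crossing $D$, whose horizontal components need not be smooth over $Y$, resists reduction to the adapted case, since the requisite blow-ups may destroy either semistability or the $\wt S$-lifting. Controlling this reduction --- equivalently, showing that the decomposition of Theorem \ref{5.9} persists for non-adapted coefficients --- is the heart of the problem and the reason the statement is recorded here only as a conjecture.
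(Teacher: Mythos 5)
This statement is Conjecture \ref{0.1}; the paper offers no proof of it, and in fact explicitly lists the obstacles to proving it immediately after its statement. Your proposal is the standard characteristic-zero strategy (cyclic covering along the fractional part, then descent by the $\mu_N$-eigensheaf decomposition), and you correctly sense at the end that it does not close. But the gaps are more concrete and more fatal than your closing paragraph suggests. First, the integer $N$ you need may not exist: if $p$ divides a denominator of a coefficient of $H$, there is no $N$ prime to $p$ making $NH$ integral, and without $p\nmid N$ the cover is wildly ramified, $\pi_*\OO_Z$ does not split into eigensheaves, and the whole descent mechanism collapses. This is exactly the first difficulty the paper names (``some technique to change the $\Q$-divisor argument into the integral divisor argument''). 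Second, and independently, Theorem \ref{6.3} only applies when the divisor is \emph{adapted} in the sense of Definition \ref{1.2}: the horizontal components must be smooth over $Y$ (RSNC over $Y$) and the images of the non-fibral components must form an SNC configuration with $E$. Conjecture \ref{0.1} allows an arbitrary SNC divisor $D\supset X\times_Y E$, and your total transform $D_Z$ on the cover will in general be even worse positioned; so the target theorem simply does not cover the situation you reduce to. This is the second and third difficulty the paper names: the decomposition of de Rham complexes with general (non-smooth) horizontal coefficients is, as of this paper, completely unknown.

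Third, the lifting over $\wt{S}$ does not propagate to the cover. A cyclic cover of a $W_2$-liftable pair need not be $W_2$-liftable, and the resolution of the normalized cover that you would need to restore semistability is precisely the kind of birational modification that can destroy both the semistable structure over $Y$ and the mod-$p^2$ lifting; nothing in the paper (or in \cite{di}, \cite{il90}) gives functoriality of liftings under such covers and blow-ups. Finally, a small but telling slip: you say one must ``take $N$ small enough that $\dim(Z/S)=\dim(X/S)<p$ is preserved,'' but the dimension of a finite cover is independent of its degree; the real constraint on $N$ is the prime-to-$p$ condition above, which you cannot always arrange. In short, your proposal is a reasonable sketch of why the characteristic-zero proof does not transfer, but it is not a proof, and the paper records the statement as a conjecture for exactly the reasons your argument runs aground.
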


There are several difficulties in dealing with Conjecture \ref{0.1}.
First, we need some technique to change the $\Q$-divisor argument
into the integral divisor argument. Second, the situation of the
horizontal divisors contained in $D$ or $H$ is complicated. Third,
the decomposition of de Rham complexes with horizontal coefficients is
completely unknown. In this sense, all of results obtained in this paper
may be regarded as the first step to resolving Conjecture \ref{0.1}.

\begin{nota}
We denote the support of a divisor $D$ by $\Supp(D)$, the relative
dualizing sheaf of $f:X\ra Y$ by $\omega_{X/Y}$, and the divisor
defined by $x=0$ by $\divisor_0(x)$.
\end{nota}

\begin{ack}
I would like to express my gratitude to Professor Luc Illusie for
invaluable advices and warm encouragement. I am grateful to the referee
for pointing out some inaccurate places and giving many useful suggestions,
which make this paper more readable.
\end{ack}

\tableofcontents

\section{Definitions and preliminaries}\label{S1}

This section is parallel to \cite[\S 1]{il90}, and all proofs follow
Illusie's proofs very closely.

\begin{defn}\label{1.1}
Let $S$ be a scheme, $X$ and $Y$ smooth $S$-schemes, and $f:X\ra Y$
an $S$-morphism. Let $E\subset Y$ be a divisor relatively simple normal
crossing over $S$ (RSNC for short), and $E_X=X\times_Y E$.
We say that $f:X\ra Y$ is $E$-semistable, or that $f$ has a semistable
reduction along $E$, if locally for the \'etale topology, $f$ is the
product of $S$-morphisms of the following type (i) or (ii):
\begin{itemize}
\item[(i)] $pr_1:\A^n_S\ra\A^1_S$, $E=\emptyset$;
\item[(ii)] $h:\A^n_S\ra\A^1_S$, $h^*y=x_1\cdots x_n$, where
$\A^n_S=\Spec\OO_S[x_1,\cdots,x_n]$, $\A^1_S=\Spec\OO_S[y]$,
and $E=\divisor_0(y)$.
\end{itemize}
\end{defn}

\begin{defn}\label{1.2}
Let $f:X\ra Y$ be an $E$-semistable morphism as in Definition
\ref{1.1}. A divisor $D\subset X$ is said to be adapted to $f$,
if the following conditions hold:
\begin{itemize}
\item[(i)] $D$ admits a decomposition of irreducible components:
$D=E_X+D_a+D_h$, where $D_a$ is the sum of the irreducible components of
$D$ whose images under $f$ are divisors not contained in $E$, and $D_h$ is
the sum of the irreducible components of $D$ whose images under $f$ are the
whole $Y$; and
\item[(ii)] $D$ is RSNC over $S$, $D_h$ is RSNC over $Y$, and the union of
the divisor $A:=f(D_a)$ and $E$ is RSNC over $S$.
\end{itemize}
\end{defn}

\begin{rem}\label{1.3}
(1) The divisor $E_X$ is adapted to $f$.

(2) In Definition \ref{1.2}, for any irreducible component $D_{h1}$ of
$D_h$, the restriction morphism $f|_{D_{h1}}:D_{h1}\ra Y$ is smooth.
\end{rem}

\begin{defn}\label{1.4}
Let $f:X\ra Y$ be an $E$-semistable $S$-morphism with an adapted divisor
$D$ as in Definition {1.2}. For simplicity, denote $E+A$ by $E_a$.
Let $\Omega_{X/S}^\bullet(\log D)$ (resp.\ $\Omega_{Y/S}^\bullet(\log E_a)$)
be the de Rham complex of $X$ (resp.\ $Y$) with logarithmic poles along $D$
(resp.\ $E_a$). We define $\Omega_{X/Y}^\bullet(\log D/E_a)$, the de Rham
complex of $X$ over $Y$ with relative logarithmic poles along $D$ over $E_a$,
in the following way:

Let $\Omega_{X/Y}^1(\log D/E_a)$ be the quotient of $\Omega_{X/S}^1(\log D)$
by the image of $f^*\Omega_{Y/S}^1(\log E_a)$. Then by Lemma \ref{1.5},
$\Omega_{X/Y}^1(\log D/E_a)$ is a locally free sheaf on $X$ of rank $d=n-e$,
where $n=\dim(X/S)$ and $e=\dim(Y/S)$. Let $\Omega_{X/Y}^i(\log D/E_a)=
\wedge^i\Omega_{X/Y}^1(\log D/E_a)$ and define the differential $d$ by
passage to the quotient of that of the complex $\Omega_{X/S}^\bullet(\log D)$.
\end{defn}

It is easy to see that if $f$ is smooth, then $\Omega_{X/Y}^\bullet(\log D/E_a)
=\Omega_{X/Y}^\bullet(\log D_h)$ in the usual sense.

\begin{lem}\label{1.5}
With notation as in Definition \ref{1.4},
there is an exact sequence of locally free sheaves of finite type:
\begin{equation}
0\ra f^*\Omega_{Y/S}^1(\log E_a)\ra \Omega_{X/S}^1(\log D)\ra
\Omega_{X/Y}^1(\log D/E_a)\ra 0. \label{f1.1}
\end{equation}
\end{lem}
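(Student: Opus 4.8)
The sequence \eqref{f1.1} is right exact by construction, since $\Omega^1_{X/Y}(\log D/E_a)$ is \emph{defined} in Definition~\ref{1.4} as the cokernel of $f^*\Omega^1_{Y/S}(\log E_a)\ra\Omega^1_{X/S}(\log D)$; so the real content is the injectivity of this first map, the local freeness of all three terms, and the rank equality $d=n-e$. Each of these assertions is étale-local on $X$ and every sheaf in \eqref{f1.1} commutes with étale localization, so the plan is to reduce to the explicit local models provided by Definitions~\ref{1.1} and~\ref{1.2} and to write down an $\OO_X$-basis in which the map is transparent.

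First I would fix $x\in X$, put $y=f(x)$, and choose simultaneous étale coordinates realizing both the semistable form of $f$ and the relative normal crossing form of $D$ and $E_a$. On $Y$ take $t_1,\dots,t_e$ with $E=\divisor_0(t_1\cdots t_a)$ and $A=\divisor_0(t_{a+1}\cdots t_b)$, so that $E_a=\divisor_0(t_1\cdots t_b)$ and $\Omega^1_{Y/S}(\log E_a)$ is free on $\frac{dt_1}{t_1},\dots,\frac{dt_b}{t_b},dt_{b+1},\dots,dt_e$. By Definition~\ref{1.1}, over each component of $E$ the morphism $f$ is of type (ii), giving $f^*t_i=\prod_k x_{ik}$ for $i\le a$, while over the other directions $f$ is smooth and $f^*t_i=x_i$. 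Since $D$ is adapted, locally $E_X=\sum_{i\le a}\divisor_0(\prod_k x_{ik})$, the components of $D_a$ lie over $A$ and are of the form $\divisor_0(x_i)$ with $a<i\le b$, and $D_h$ is cut out by fibre coordinates; hence $\Omega^1_{X/S}(\log D)$ is free on the $\frac{dx_{ik}}{x_{ik}}$, the $\frac{dx_i}{x_i}$ along $D_a$, the $dx_i$ along the free base directions, the logarithmic differentials along $D_h$, and the remaining relative differentials.

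Granting these coordinates, the rest is routine linear algebra. The map $f^*$ sends $\frac{dt_i}{t_i}$ to $\sum_k\frac{dx_{ik}}{x_{ik}}$ for $i\le a$, to $\frac{dx_i}{x_i}$ for $a<i\le b$, and $dt_i$ to $dx_i$ for $b<i\le e$; in particular it lands in $\Omega^1_{X/S}(\log D)$ with exactly the correct pole orders, which is precisely what adaptedness of $D$ secures. These $e$ image elements are visibly part of an $\OO_X$-basis (complete each $\sum_k\frac{dx_{ik}}{x_{ik}}$ to a basis by retaining all but one of the $\frac{dx_{ik}}{x_{ik}}$ in the corresponding singular fibre, and adjoin the $D_h$ and relative differentials). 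Thus the image is a local direct summand: the first map is injective and the cokernel is locally free of rank $n-e=d$. As the local bases patch, \eqref{f1.1} is the asserted exact sequence of locally free sheaves of finite type.

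The hard part is the first step — producing one étale-local coordinate system that simultaneously exhibits the semistable product structure of $f$ along $E$ and the relative normal crossing structure of $D=E_X+D_a+D_h$, reconciling the horizontal part $D_h$ (which dominates $Y$ and so contributes only to the quotient) with the vertical part $D_a$ over $A=f(D_a)$. This geometric normalization is inherited from Illusie's argument in the coefficient-free case; once it is in place the computation above is immediate.
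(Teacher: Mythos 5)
Your proposal is correct and follows essentially the same route as the paper: reduce étale-locally to the explicit coordinate models furnished by the semistability of $f$ and the adaptedness of $D$, write the image of $f^*\Omega^1_{Y/S}(\log E_a)$ in an explicit basis of $\Omega^1_{X/S}(\log D)$, and observe it is a local direct summand (the paper organizes the local models as external products of three elementary types over $\A^1_S$ rather than one combined coordinate chart, but the computation is the same). The ``hard part'' you flag at the end is exactly what Definitions~\ref{1.1} and~\ref{1.2} already provide, so no additional normalization lemma is needed.
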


\begin{proof}
We have only to prove the statement locally for the \'etale topology,
so it suffices to check for the following three types, where $\A^n_S=
\Spec\OO_S[x_1,\cdots,x_n]$ and $\A^1_S=\Spec\OO_S[y]$:
\begin{itemize}
\item[(i)] $pr_1:\A^n_S\ra\A^1_S$, where $E=\emptyset$, $A=\emptyset$,
$pr^*_1(y)=x_1$, and $D_h=\divisor_0(x_2\cdots x_r)$ for $1\leq r\leq n$;
\item[(ii)] $pr_1:\A^n_S\ra\A^1_S$, where $E=\emptyset$, $A=\divisor_0(y)$,
$pr^*_1(y)=x_1$, $D_a=\divisor_0(x_1)$, and $D_h=\divisor_0(x_2\cdots x_r)$
for $1\leq r\leq n$;
\item[(iii)] $h:\A^n_S\ra\A^1_S$, where $E=\divisor_0(y)$, $h^*y=x_1
\cdots x_s$, $A=\emptyset$, and $D_h=\divisor_0(x_{s+1}\cdots x_{n})$
for $2\leq s\leq n$.
\end{itemize}

(i) $f^*\Omega_{Y/S}^1(\log E_a)$ is generated by $f^*(dy)=dx_1$,
$\Omega_{X/S}^1(\log D)$ is generated by $dx_1,dx_2/x_2,\cdots,dx_r/x_r,
dx_{r+1},\cdots,dx_n$, hence $\Omega_{X/Y}^1(\log D/E_a)$ is generated by \\
$dx_2/x_2,\cdots,dx_r/x_r,dx_{r+1},\cdots,dx_n$, so the conclusion is clear.

(ii) $f^*\Omega_{Y/S}^1(\log E_a)$ is generated by $f^*(dy/y)=dx_1/x_1$,
$\Omega_{X/S}^1(\log D)$ is generated by $dx_1/x_1,\cdots,dx_r/x_r,
dx_{r+1},\cdots,dx_n$, hence $\Omega_{X/Y}^1(\log D/E_a)$ is generated by
$dx_2/x_2,\cdots,dx_r/x_r,dx_{r+1},\cdots,dx_n$, so the conclusion is clear.

(iii) $f^*\Omega_{Y/S}^1(\log E_a)$ is generated by $f^*(dy/y)=
\sum_{i=1}^s dx_i/x_i$, $\Omega_{X/S}^1(\log D)$ is generated by
$dx_1/x_1,\cdots,dx_n/x_n$, hence
\[ \Omega_{X/Y}^1(\log D/E_a)=\OO_X\langle\frac{dx_1}{x_1},\cdots,
\frac{dx_s}{x_s}\rangle/(\sum_{i=1}^s\frac{dx_i}{x_i})\bigoplus
\OO_X\langle\frac{dx_{s+1}}{x_{s+1}},\cdots,\frac{dx_n}{x_n}\rangle, \]
so the conclusion is clear.
\end{proof}

\begin{rem}\label{1.6}
(1) Let $f_i:X_i\ra Y_i$ be $E_i$-semistable $S$-morphisms with adapted
divisors $D_i$ as in Definition \ref{1.2}. Let $X$, $Y$ and $f:X\ra Y$
be the external products over $S$ of $X_i$, $Y_i$ and $f_i$ respectively.
Then $\Omega_{X/Y}^\bullet(\log D/E_a)$ is the external tensor product of
$\Omega_{X_i/Y_i}^\bullet(\log D_i/E_{ai})$ over $S$.

(2) In the exact sequence (\ref{f1.1}), taking the top exterior tensor
product gives rise to the following canonical isomorphism:
$f^*\Omega_{Y/S}^e(\log E_a)\otimes\Omega_{X/Y}^d(\log D/E_a)\stackrel
{\sim}{\ra}\Omega_{X/S}^n(\log D)$. Since $\Omega_{Y/S}^e(\log E_a)=
\omega_{Y/S}(E_a)$, $\Omega_{X/S}^n(\log D)=\omega_{X/S}(D)$, we have
$\Omega_{X/Y}^d(\log D/E_a)\cong\omega_{X/Y}(D_h)$.

(3) Let $f':X'\ra Y'$ be deduced from $f:X\ra Y$ by a base change
$Y'\ra Y$. Pose
\begin{equation}
\Omega_{X'/Y'}^\bullet(\log D'/E'_a)=
\Omega_{X/Y}^\bullet(\log D/E_a)\otimes_{\OO_X}\OO_{X'}. \label{f1.2}
\end{equation}
Note that, in general, $X'$ is no longer smooth over $S$, and that it is
no longer possible to interpret $\Omega_{X'/Y'}^\bullet(\log D'/E'_a)$
as a de Rham complex with relative logarithmic poles.

(4) Let $j:U\hookrightarrow X$ be the open subset over which $f$ is smooth.
Then we have a canonical isomorphism:
\begin{equation}
\Omega_{X/Y}^\bullet(\log D/E_a)\stackrel{\sim}{\longrightarrow}
j_*\Omega_{U/Y}^\bullet(\log D|_U/E_a). \label{f1.3}
\end{equation}
In fact, for any point $s\in S$, $X_s-U_s$ is of codimension at least 2
in $X_s$, therefore $\Omega_{X/Y}^\bullet(\log D/E_a)$ is the unique
prolongation of $\Omega_{U/Y}^\bullet(\log D|_U/E_a)$ with components
being locally free of finite type.
\end{rem}

From now on, let $S$ be a scheme of characteristic $p>0$, and $f:X\ra Y$
an $E$-semistable $S$-morphism with an adapted divisor $D$ as in Definition
\ref{1.2}. Let $F_X$ and $F_Y$ be the absolute Frobenius morphisms
of $X$ and $Y$, which fit into the following commutative diagram:
\begin{equation}
\xymatrix{
X \ar[dr]_f \ar[r]^F & X' \ar[d]^{f'} \ar[r] & X \ar[d]^{f} \\
                     & Y  \ar[r]^{F_Y}       & Y,
} \label{f1.4}
\end{equation}
where the square is cartesian and the composition of the upper
horizontal morphisms is equal to $F_X$.

The differential $d$ of the complex $F_*\Omega_{X/Y}^\bullet(\log D/E_a)$
is $\OO_{X'}$-linear, so we would like to calculate its cohomology
$\OO_{X'}$-modules by a Cartier type isomorphism. Consider the following
commutative diagram with cartesian square:
\begin{equation}
\xymatrix{
X \ar[dr] \ar[r]^{F_{X/S}} & X_1 \ar[d] \ar[r] & X \ar[d] \\
                        & S  \ar[r]^{F_S}   & S,
} \label{f1.5}
\end{equation}
where $F_{X/S}:X\ra X_1$ is the relative Frobenius morphism of $X$ over $S$.

By \cite[7.2]{katz}, we have the Cartier isomorphism
\[ C^{-1}:\Omega^1_{X_1/S}(\log D_1)\stackrel{\sim}{\ra}
\HH^1(F_{X/S*}\Omega_{X/S}^\bullet(\log D)), \]
where $D_1$ is the pullback of $D$ by $F_S$. By adjunction of
$(F_S^*,F_{S*})$ and abuse of notation, we have the homomorphism
\[ C^{-1}:\Omega^1_{X/S}(\log D)\ra\HH^1(F_{X*}\Omega_{X/S}^
\bullet(\log D)), \]
which sends $dx$ (resp.\ $dx/x$) to the cohomology class of
$x^{p-1}dx$ (resp.\ $dx/x$) in the $\OO_X$-module
$\HH^1(F_{X*}\Omega_{X/S}^\bullet(\log D))$.
The natural surjective morphism of complexes of $\OO_X$-modules
$F_{X*}\Omega_{X/S}^\bullet(\log D)\ra F_{X*}\Omega_{X/Y}^\bullet
(\log D/E_a)$ induces a natural homomorphism
\[ \pi:\HH^1(F_{X*}\Omega_{X/S}^\bullet
(\log D))\ra\HH^1(F_{X*}\Omega_{X/Y}^\bullet(\log D/E_a)), \]
which kills all cohomology classes of $y^{p-1}dy$ (resp.\ $dy/y$), where
$dy$ (resp.\ $dy/y$) are local sections of $f^*\Omega^1_{Y/S}(\log E_a)$.
The composition
\[ \pi\circ C^{-1}:
\Omega^1_{X/S}(\log D)\ra\HH^1(F_{X*}\Omega_{X/Y}^\bullet(\log D/E_a)) \]
vanishes on $f^*\Omega^1_{Y/S}(\log E_a)$, which defines the homomorphism
\[ C^{-1}:\Omega^1_{X/Y}(\log D/E_a)\ra\HH^1(F_{X*}\Omega_{X/Y}^\bullet
(\log D/E_a)). \]
By adjunction of $(F_Y^*,F_{Y*})$, we have the Cartier homomorphism
\[ C^{-1}:\Omega^1_{X'/Y}(\log D'/E_a)\ra\HH^1(F_*\Omega_{X/Y}^\bullet
(\log D/E_a)). \]
The exterior product gives rise to a homomorphism of graded
$\OO_{X'}$-algebras:
\begin{equation}
C^{-1}:\Omega^*_{X'/Y}(\log D'/E_a)\ra\HH^*(F_*\Omega_{X/Y}^\bullet
(\log D/E_a)). \label{f1.6}
\end{equation}

\begin{prop}\label{1.7}
The homomorphism (\ref{f1.6}) is an isomorphism.
\end{prop}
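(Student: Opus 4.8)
The plan is to prove that $C^{-1}$ is an isomorphism by reducing to an explicit local computation. Since both sides of (\ref{f1.6}) are locally free $\OO_{X'}$-modules and, as noted just before the statement, $C^{-1}$ is a homomorphism of graded $\OO_{X'}$-algebras, the assertion is local for the \'etale topology on $X$. Hence by Definition \ref{1.1} and the local description used in the proof of Lemma \ref{1.5} I may assume that $(f,D)$ is an external product over $S$ of elementary factors of the three types (i), (ii), (iii) listed there. By Remark \ref{1.6}(1) both $\Omega^\bullet_{X'/Y}(\log D'/E_a)$ and $F_*\Omega^\bullet_{X/Y}(\log D/E_a)$ are the corresponding external tensor products, the relative Frobenius $F$ is compatible with these products, and $C^{-1}$ respects the resulting multiplicative structures. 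Since the computation below will exhibit each cohomology sheaf $\HH^q$ as locally free, the K\"unneth formula applies termwise and reduces the statement to a single elementary factor; in degree $0$ the map is just the identity of $\OO_{X'}$.

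For the factors coming from types (i) and (ii), the directions $dx_1$ (resp.\ $dx_1/x_1$) pulled back from $Y$ are killed in the relative complex, so the only surviving one-variable blocks are the smooth block $[\OO\ra\OO\,dx]$ and the horizontal logarithmic block $[\OO\ra\OO\,dx/x]$. For these, $C^{-1}$ is nothing but the classical (logarithmic) Cartier isomorphism of \cite[7.2]{katz}, so there is no new content; the same two blocks account for the horizontal components $\divisor_0(x_{s+1}\cdots x_n)$ occurring in type (iii).

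The essential case is the semistable block of type (iii), where $f:\A^s_S\ra\A^1_S$, $f^*y=x_1\cdots x_s$, and by Lemma \ref{1.5} one has $\Omega^\bullet_{X/Y}(\log D/E_a)=\wedge^\bullet\Omega^1_0$ with $\Omega^1_0=(\bigoplus_{i=1}^s\OO_X\,\omega_i)/\OO_X(\sum_i\omega_i)$ free of rank $s-1$, where $\omega_i=dx_i/x_i$ and $d(x^a\eta)=x^a(\sum_i\bar a_i\,\omega_i)\wedge\eta$ with $\bar a_i$ the image of $a_i$ in $\OO_S$. I would compute $\HH^\bullet(F_*\Omega^\bullet_{X/Y}(\log D/E_a))$ by splitting $F_*\OO_X$ into monomials $x^a$. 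Here $\OO_{X'}=\OO_S[x_1,\dots,x_s,y']/(x_1\cdots x_s-y'^p)$ is singular and acts through $x_i\mapsto x_i^p$, $y'\mapsto x_1\cdots x_s$, so as an $\OO_{X'}$-module the complex is the direct sum over the orbits of $\{0,\dots,p-1\}^s$ under the diagonal shift $r\mapsto r+(1,\dots,1)$, each orbit being $\OO_{X'}$-free. On the orbit of a residue $r$ the differential is $\OO_{X'}$-linear and equal to exterior multiplication by the constant element $\bar\alpha_r=\sum_i\bar r_i\,\omega_i\in\Omega^1_0$, so $\HH^q$ of that orbit is a free $\OO_{X'}$-module tensored with $\HH^q(\wedge^\bullet\Omega^1_0,\,\bar\alpha_r\wedge)$. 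Now $\bar\alpha_r=0$ exactly when all $r_i$ coincide, i.e.\ only for the single diagonal orbit; otherwise, writing $\bar\alpha_r$ in the basis $\bar\omega_1,\dots,\bar\omega_{s-1}$ its coordinates are the differences $\bar r_i-\bar r_s\in\F_p\subset\OO_S$, at least one of which is a nonzero element of $\F_p$ and hence a unit, so $\bar\alpha_r$ is unimodular and the Koszul-type complex $(\wedge^\bullet\Omega^1_0,\bar\alpha_r\wedge)$ is acyclic. Consequently only the diagonal orbit contributes, and it reproduces exactly $\Omega^q_{X'/Y}(\log D'/E_a)=\OO_{X'}\otimes\wedge^q\Omega^1_0$; a direct check that $C^{-1}$ sends $\omega_i$ to the class of $\omega_i$ then identifies this isomorphism with the map (\ref{f1.6}).

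The main obstacle is precisely this semistable block. Unlike the smooth case, $X'$ is singular along the semistable locus, so the $\OO_{X'}$-module structure on $F_*\Omega^\bullet_{X/Y}(\log D/E_a)$ must be analysed through the orbit decomposition above rather than through a naive monomial grading, and the acyclicity of the off-diagonal orbits has to be established over an arbitrary $\F_p$-algebra $\OO_S$; this is exactly where the unimodularity of $\bar\alpha_r$, guaranteed by the fact that nonzero elements of $\F_p\subset\OO_S$ are units, is used. Once these points are settled, assembling the factors by K\"unneth and invoking Katz's (logarithmic) Cartier isomorphism for the remaining blocks finishes the proof.
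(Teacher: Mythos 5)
Your overall route is the same as the paper's: reduce by \'etale localization and external tensor products to the three local models of Lemma \ref{1.5}, dispose of types (i) and (ii) by the classical logarithmic Cartier isomorphism of Katz, and isolate the semistable block of type (iii) as the only real issue. The difference is that the paper, after splitting $B=B_1\otimes B_2$ and reducing to $S=\Spec\F_p$, simply cites \cite[Proposition 1.5]{il90} for the semistable factor $B_1'\ra B_1$, whereas you carry out that computation explicitly via the decomposition of $F_*\OO_X$ into orbits of monomial residues under the diagonal shift, with the differential on the orbit of $r$ given by wedging with $\bar\alpha_r=\sum_i\bar r_i\,\omega_i\in\Omega^1_0$. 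That computation is in substance the content of Illusie's cited proposition, and your identification of the diagonal orbit with $\OO_{X'}$ and the unimodularity argument for the off-diagonal orbits are correct; so the proposal is essentially a self-contained version of the paper's proof rather than a different one.

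One intermediate assertion is false as stated: the off-diagonal orbit submodules of $F_*\OO_X$ are \emph{not} free over $\OO_{X'}$. Take $p=2$, $s=2$, so $B=\F_2[x_1,x_2]$ and $B'_1=\F_2[x_1^2,x_2^2,x_1x_2]$; the orbit of the residue $(0,1)$ spans $M=\F_2[x_1^2,x_2^2]x_1\oplus\F_2[x_1^2,x_2^2]x_2$, which has generic rank $1$ over $B'_1$ but needs two generators at the origin (the relations are $(x_1x_2)\cdot x_1=x_1^2\cdot x_2$ and $(x_1x_2)\cdot x_2=x_2^2\cdot x_1$), so it is not locally free over the singular ring $B'_1$. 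Fortunately your argument does not actually need freeness: on each orbit the complex is $M_O\otimes_{\F_p}(\wedge^\bullet V,\bar\alpha_r\wedge)$ with $V=\Omega^1_0\otimes\F_p$ a constant vector space, and since $M_O$ is flat over the ground ring, $\HH^q$ of the orbit is $M_O\otimes_{\F_p}\HH^q(\wedge^\bullet V,\bar\alpha_r\wedge)$, which vanishes for off-diagonal orbits by your unimodularity remark; the diagonal orbit module \emph{is} free (it is $B'_1$ itself), so the identification of the surviving cohomology with $\Omega^q_{X'/Y}(\log D'/E_a)$ stands. You should restate the step in terms of flatness over $\F_p$ (or $\OO_S$) rather than freeness over $\OO_{X'}$; with that correction the proof is complete.
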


\begin{proof}
Since (\ref{f1.6}) is compatible with \'etale topology and external tensor
products over $S$, it suffices to prove the statement for those three types
described as in the proof of Lemma \ref{1.5}.

(i) and (ii) In these cases, $f$ is smooth, hence
$\Omega_{X/Y}^\bullet(\log D/E_a)=\Omega_{X/Y}^\bullet(\log D_h)$.
Thus (\ref{f1.6}) is just the usual Cartier isomorphism
\cite[7.2]{katz}.

(iii) In this case, we can further assume $S=\Spec\F_p$.
The diagram (\ref{f1.4}) corresponds to the following diagram of rings:
\[
\xymatrix{
B \ar[r] & B' \ar[r]^F & B \\
A \ar[u]^{f^*} \ar[r]^{F_A} & A, \ar[u]^{f^{\prime *}} \ar[ur]_{f^*}
}
\]
where $A=\F_p[y]$, $B=\F_p[x_1,\cdots,x_n]$, $f^*(y)=x_1\cdots x_s$,
$F_A(y)=y^p$ and $F(x_i)=x_i^p$. If we identify $B$ with
the $A$-algebra $\F_p[x_1,\cdots,x_n,y]/(y-x_1\cdots x_s)$, then
$B'$ can be identified with the $A$-algebra $\F_p[x_1,\cdots,x_n,y]/
(y^p-x_1\cdots x_s)$ since $y\in A$ is sent to $y^p$. Thus $B'$
can also be identified with the $A$-algebra $\F_p[x_1^p,\cdots,x_s^p,
x_{s+1},\cdots,x_n,x_1\cdots x_s]$. Define $B_1=\F_p[x_1,\cdots,x_s]$,
$B_2=\F_p[x_{s+1},\cdots,x_n]$, $B'_1=\F_p[x_1^p,\cdots,x_s^p,x_1\cdots
x_s]$, and $B'_2=\F_p[x_{s+1},\cdots,x_n]$. Then $B=B_1\otimes B_2$,
$B'=B'_1\otimes B'_2$, $F:B'\ra B$ factorizes into the external tensor
product of $F_1:B'_1\ra B_1$ defined by the inclusion and $F_2:B'_2
\ra B_2$ defined by the $p$-th power map, and $B\ra B'$ factorizes into
the external tensor product of $B_j\ra B'_j$ for $j=1,2$, where $B_1
\ra B'_1$ is defined by $x_i\mapsto x_i^p$ ($1\leq i\leq s$) and
$B_2\ra B'_2$ is defined by $x_i\mapsto x_i$ ($s+1\leq i\leq n$).
To prove that (\ref{f1.6}) is an isomorphism,
it suffices to prove that
\begin{equation}
C^{-1}:\Omega^*_{B'_j/A}(\log D'/E_a)\ra \HH^*(F_*\Omega_{B_j/A}
^\bullet(\log D/E_a)) \label{f1.7}
\end{equation}
is an isomorphism for $j=1,2$.
When $j=1$, it was proved in \cite[Proposition 1.5]{il90}.
When $j=2$, (\ref{f1.7}) is just the usual Cartier isomorphism
\cite[7.2]{katz}.
\end{proof}

\begin{rem}
Note that in case (iii), $f$ is no longer smooth, $X'$ is no longer
smooth over $S$ and $F:X\ra X'$ is no longer flat.
\end{rem}

\section{Decomposition of de Rham complex with relative logarithmic
poles}\label{S2}

This section is parallel to \cite[\S 2]{il90}, and all proofs follow
Illusie's proofs very closely.

\begin{defn}\label{2.1}
Let $S$ be a scheme of characteristic $p>0$. A lifting of $S$ over $\Z/p^2\Z$
is a scheme $\wt{S}$, defined and flat over $\Z/p^2\Z$ such that $\wt{S}\times
_{\Spec\Z/p^2\Z}\Spec\F_p=S$. A lifting of the absolute Frobenius morphism
$F_S:S\ra S$ over $\wt{S}$ is an endomorphism $F_{\wt{S}}:\wt{S}\ra\wt{S}$ of
$\wt{S}$ such that $F_{\wt{S}}|_S=F_S$. A lifting of an $E$-semistable $S$-morphism
$f:X\ra Y$ with an adapted divisor $D$ over $\wt{S}$ is an $\wt{E}$-semistable
$\wt{S}$-morphism $\wt{f}:\wt{X}\ra\wt{Y}$ with an adapted divisor $\wt{D}$
as in Definition \ref{1.2}, such that $\wt{X}\times_{\wt{S}}S=X$,
$\wt{Y}\times_{\wt{S}}S=Y$, $\wt{D}\times_{\wt{S}}S=D$,
$\wt{E}\times_{\wt{S}}S=E$ and $\wt{f}|_X=f$. We say that
$\wt{f}:(\wt{X},\wt{D})\ra (\wt{Y},\wt{E}_a)$ is a lifting of
$f:(X,D)\ra (Y,E_a)$ over $\wt{S}$, if no confusion is likely.
\end{defn}

In this section, let $S$ be a scheme of characteristic $p>0$, $\wt{S}$
a lifting of $S$ over $\Z/p^2\Z$, and $F_{\wt{S}}:\wt{S}\ra\wt{S}$ a lifting
of the absolute Frobenius morphism $F_S:S\ra S$ over $\wt{S}$.
Let $f:X\ra Y$ be an $E$-semistable $S$-morphism with an adapted divisor $D$
as in Definition \ref{1.2}, and $\wt{f}:(\wt{X},\wt{D})\ra (\wt{Y},\wt{E}_a)$
a lifting of $f:(X,D)\ra (Y,E_a)$ over $\wt{S}$ as in Definition \ref{2.1}.
Let $\wt{D}_1\subset\wt{X}_1$ be the $\wt{S}$-schemes deduced from
$\wt{D}\subset\wt{X}$ by the base change $F_{\wt{S}}$, and
$\wt{F}:\wt{X}\ra\wt{X}_1$ an $\wt{S}$-morphism lifting
the relative Frobenius morphism $F:X\ra X_1$ of $X$ over $S$.
\[
\xymatrix{
\wt{X} \ar[dr] \ar[r]^{\wt{F}} & \wt{X}_1 \ar[d] \ar[r] & \wt{X} \ar[d] \\
                        & \wt{S}  \ar[r]^{F_{\wt{S}}}   & \wt{S}
}
\]
$\wt{F}$ is said to be compatible with $\wt{D}$ if
$\wt{F}^*\OO_{\wt{X}_1}(-\wt{D}_1)=\OO_{\wt{X}}(-p\wt{D})$ holds.
Locally for the \'etale topology on $X$, there exists a lifting
$\wt{F}:\wt{X}\ra\wt{X}_1$ compatible with $\wt{D}$. Indeed, if $\wt{X}$ is
\'etale over $\A^n_{\wt{S}}$ via coordinates $\{\wt{x}_1,\cdots,\wt{x}_n\}$
and $\wt{D}=\divisor_0(\wt{x}_1\cdots\wt{x}_r)$, then there exists a unique
lifting $\wt{F}$ such that $\wt{F}^*(\wt{x}_i\otimes 1)=\wt{x}_i^p$ for
$1\leq i\leq n$.

We recall the following results from \cite[4.2.3]{di}. Two compatible
liftings $\wt{F}_1,\wt{F}_2$ differ by a derivation
\[ h_{12}=(\wt{F}_2^*-\wt{F}_1^*)/p:\Omega^1_{X_1/S}(\log D_1)\ra
F_*\OO_X. \]
In fact, if $\wt{X}$ is \'etale over $\A^n_{\wt{S}}$ via coordinates
$\{\wt{x}_1,\cdots,\wt{x}_n\}$ and $\wt{D}=\divisor_0(\wt{x}_1\cdots
\wt{x}_r)$, then we can write $\wt{F}_j^*(\wt{x}_i\otimes 1)=(1+pa_{ij})
\wt{x}_i^p$ for $1\leq i\leq r$ and $j=1,2$. By an easy calculation,
we have $h_{12}(dx_i/x_i\otimes 1)=a_{i2}-a_{i1}$ for $1\leq i\leq r$.
Furthermore, any lifting $\wt{F}$ compatible with $\wt{D}$ gives rise to
a quasi-isomorphism of complexes:
\begin{equation}
\phi_{\wt{F}}: \bigoplus_{i<p}\Omega^i_{X_1/S}(\log D_1)[-i]\ra
\tau_{<p}F_*\Omega^\bullet_{X/S}(\log D), \label{f2.1}
\end{equation}
which is given in degree 1 by $\phi^1_{\wt{F}}=\wt{F}^*/p$ and
prolonged canonically through the exterior powers.

\begin{thm}\label{2.2}
Let $f:X\ra Y$ be an $E$-semistable $S$-morphism with an adapted
divisor $D$, and $\wt{f}:(\wt{X},\wt{D})\ra (\wt{Y},\wt{E}_a)$ a lifting
of $f:(X,D)\ra (Y,E_a)$ over $\wt{S}$. Let $\wt{F}_{Y/S}:\wt{Y}\ra\wt{Y}_1$
be a lifting over $\wt{S}$ of the relative Frobenius morphism $F_{Y/S}:
Y\ra Y_1$ of $Y$ over $S$, which is compatible with the divisor $\wt{E}_a$.
Then there is a canonical isomorphism in $D(X',\OO_{X'})$:
\begin{equation}
\phi_{(\wt{f},\wt{F}_{Y/S})}:\bigoplus_{i<p}\Omega^i_{X'/Y}(\log D'/E_a)
[-i]\ra\tau_{<p}F_{X/Y*}\Omega^\bullet_{X/Y}(\log D/E_a), \label{f2.2}
\end{equation}
which induces the Cartier isomorphism $C^{-1}$ (\ref{f1.6}) on $\HH^i$.
\end{thm}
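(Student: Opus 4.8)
The plan is to follow the Deligne--Illusie degree-one lifting argument \cite[4.2.3]{di}, as adapted to the semistable setting in \cite{il90}, but carried out on the quotient complex $\Omega^\bullet_{X/Y}(\log D/E_a)$. Concretely, I would build the morphism $\phi_{(\wt f,\wt F_{Y/S})}$ in degree $1$ from a local lifting of Frobenius, extend it multiplicatively through the exterior powers, glue the local pieces into a single morphism in $D(X',\OO_{X'})$, and finally identify the map it induces on $\HH^i$ with the Cartier homomorphism $C^{-1}$ of (\ref{f1.6}). Since $C^{-1}$ is an isomorphism by Proposition \ref{1.7}, this last identification upgrades $\phi_{(\wt f,\wt F_{Y/S})}$ to a quasi-isomorphism onto the truncation $\tau_{<p}$, hence to an isomorphism in $D(X',\OO_{X'})$.

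First I would construct the degree-one map locally. Working \'etale-locally on $X$ as in Lemma \ref{1.5}, I would choose a lifting $\wt F_{X/S}\colon\wt X\ra\wt X_1$ of the relative Frobenius compatible with $\wt D$ and, in addition, compatible with the fixed $\wt F_{Y/S}$ in the sense that $\wt f_1\circ\wt F_{X/S}=\wt F_{Y/S}\circ\wt f$, where $\wt f_1\colon\wt X_1\ra\wt Y_1$ is deduced from $\wt f$. Existence is checked on the three local models of Lemma \ref{1.5}: in types (i) and (ii) the morphism is smooth and this is \cite[4.2.3]{di}; in type (iii) the only new constraint is that $\wt F_{X/S}$ send the monomial $\wt x_1\cdots\wt x_s$ to the unit multiple of its $p$-th power prescribed by $\wt F_{Y/S}$, which amounts to a single congruence modulo $p^2$ on the $a_i$ and can be absorbed into the lifting of one coordinate. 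Given such $\wt F_{X/S}$, the map $\wt F_{X/S}^*/p$ of (\ref{f2.1}) sends the subsheaf $f_1^*\Omega^1_{Y_1/S}(\log E_{a1})$ of $\Omega^1_{X_1/S}(\log D_1)$ into $F_{X/S*}f^*\Omega^1_{Y/S}(\log E_a)$, by the compatibility with $\wt F_{Y/S}$; hence by the exact sequence (\ref{f1.1}) it descends to the relative quotients. Passing through the two adjunctions $(F_S^*,F_{S*})$ and $(F_Y^*,F_{Y*})$ exactly as in the construction of $C^{-1}$ preceding Proposition \ref{1.7}, I obtain $\phi^1\colon\Omega^1_{X'/Y}(\log D'/E_a)\ra F_{X/Y*}\Omega^1_{X/Y}(\log D/E_a)$ with image in the closed forms.

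Next I would extend $\phi^1$ through the exterior powers to define $\phi^i$ for every $i<p$ and assemble the morphism of complexes (\ref{f2.2}); this is a genuine cochain map because the source differential is zero and each $\phi^i$ takes values in cocycles. The remaining point is gluing: two compatible local choices $\wt F_{X/S}^{(1)},\wt F_{X/S}^{(2)}$ differ by a derivation $h_{12}\colon\Omega^1_{X_1/S}(\log D_1)\ra F_{X/S*}\OO_X$ which, because both are compatible with the \emph{same} $\wt F_{Y/S}$, vanishes on $f_1^*\Omega^1_{Y_1/S}(\log E_{a1})$ and therefore descends to a derivation on the relative sheaf $\Omega^1_{X_1/Y_1}(\log D_1/E_{a1})$. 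This descended derivation furnishes, exactly as in \cite[4.2.3]{di} and \cite{il90}, an $\OO_{X'}$-linear homotopy between the two local realizations of $\phi$, so the local maps agree in $D(X',\OO_{X'})$ and glue to a global morphism depending only on the data $(\wt f,\wt F_{Y/S})$, as the notation indicates.

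Finally, by tracing the degree-one construction through the same two adjunctions used to define $C^{-1}$, the map induced by $\phi_{(\wt f,\wt F_{Y/S})}$ on $\HH^i$ is precisely the Cartier isomorphism (\ref{f1.6}), which is bijective by Proposition \ref{1.7}; hence $\phi_{(\wt f,\wt F_{Y/S})}$ is the desired isomorphism in $D(X',\OO_{X'})$. I expect the main obstacle to lie in the gluing and cohomology identification in case (iii), where $f$ is not smooth, $X'$ is not smooth over $S$, and $F_{X/Y}$ is not flat (cf.\ the Remark following Proposition \ref{1.7}); there I must verify that the difference derivations genuinely descend to $\Omega^1_{X_1/Y_1}(\log D_1/E_{a1})$ and that the induced homotopy, as well as the map on $\HH^i$, remains compatible with the factorized, non-flat Frobenius underlying the Cartier isomorphism (\ref{f1.7}).
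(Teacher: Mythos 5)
Your proposal follows essentially the same route as the paper's proof: construct $\phi^1$ from a local lifting of the relative Frobenius compatible with both $\wt{D}$ and the fixed $\wt{F}_{Y/S}$ (so that $\phi^1_{\wt{F}}$ kills $f_1^*\Omega^1_{Y_1/S}(\log E_{a1})$ and descends to the relative quotient), extend through exterior powers, observe that two such liftings differ by a derivation vanishing on the pullback subsheaf and hence yielding a homotopy on the relative complex, globalize by the \v{C}ech procedure of \cite[2.1(d)]{di}, and verify local existence of compatible liftings on the \'etale local models, including the case where the unit factor from $\wt{F}_{Y/S}^*$ is absorbed into one coordinate of the singular fiber. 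This matches the paper's three-step argument, so the proposal is correct and not materially different.
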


\begin{proof}
The proof is analogous to that of \cite[Theorem 2.2]{il90}. It suffices
to define, for any $i<p$, $\phi^i:\Omega^i_{X'/Y}(\log D'/E_a)[-i]
\ra F_{X/Y*}\Omega^\bullet_{X/Y}(\log D/E_a)$ inducing $C^{-1}$ on $\HH^i$.
Since $\phi^i$ can be deduced from $\phi^1$ by a similar argument to that
of \cite[2.1(a)]{di}, we have only to define $\phi^1$. The definition of
$\phi^1$ is given in three steps.

(Step 1: local case) To define $\phi^1$, we first suppose that there is
a lifting $\wt{F}:\wt{X}\ra\wt{X}_1$ of the relative Frobenius morphism
$F:X\ra X_1$ of $X$ over $S$, which is compatible with $\wt{D}$ and
compatible with $\wt{F}_{Y/S}$ in the sense that the square is commutative:
\begin{equation}
\xymatrix{
\wt{X} \ar[r]^{\wt{F}} \ar[d]_{\wt{f}} & \wt{X}_1 \ar[d]^{\wt{f}_1} \\
\wt{Y} \ar[r]^{\wt{F}_{Y/S}} & \wt{Y}_1.
} \label{f2.3}
\end{equation}
The morphism $\phi^1_{\wt{F}}:\Omega^1_{X_1/S}(\log D_1)[-1]\ra
F_*\Omega^\bullet_{X/S}(\log D)$ in (\ref{f2.1}), composed with the
projection of $F_*\Omega^\bullet_{X/S}(\log D)$ onto $F_*\Omega^
\bullet_{X/Y}(\log D/E_a)$ vanishes on the subsheaf
$f_1^*\Omega^1_{Y_1/S}(\log E_{a1})[-1]$, therefore by passage to
the quotient, it defines a morphism
$\Omega^1_{X_1/Y_1}(\log D_1/E_{a1})[-1]\ra F_*\Omega^\bullet_{X/Y}
(\log D/E_a)$, and by adjunction, it defines a morphism
\begin{equation}
\phi^1:\Omega^1_{X'/Y}(\log D'/E_a)[-1]\ra F_{X/Y*}\Omega^\bullet_{X/Y}
(\log D/E_a), \label{f2.4}
\end{equation}
which induces the Cartier isomorphism $C^{-1}$ on $\HH^1$.

(Step 2: from local to global) Assume that $\wt{F}_j:\wt{X}\ra\wt{X}_1$
are liftings of the relative Frobenius of $X$ over $S$ for $j=1,2$, which
are compatible with $\wt{D}$ and compatible with $\wt{F}_{Y/S}$. Then the
derivation $(\wt{F}_2^*-\wt{F}_1^*)/p:\Omega^1_{X_1/S}(\log D_1)\ra F_*\OO_X$
vanishes on the subsheaf $f^*_1\Omega^1_{Y_1/S}(\log E_{a1})$ by the
commutativity of the square (\ref{f2.3}). Therefore by passage to the quotient
and by adjunction, it defines a homomorphism
\[ h_{12}:\Omega^1_{X'/Y}(\log D'/E_a)\ra F_{X/Y*}\OO_X. \]
A calculation analogous to that of \cite[2.1(c)]{di}
shows that $\phi^1_2-\phi^1_1=dh_{12}$ holds, where $\phi^1_j$ are the
morphisms (\ref{f2.4}) associated to $\wt{F}_j$ for $j=1,2$.
By a similar argument to that of \cite[2.1(c)]{di}, we have a relation
of transitivity: $h_{12}+h_{23}=h_{13}$ for three liftings $\wt{F}_1,
\wt{F}_2,\wt{F}_3$ of the relative Frobenius of $X$ over $S$. Working on
the \'etale topology instead of the Zariski topology on $X$, we can
construct a global morphism $\phi^1$ by the procedure of the \v{C}ech
globalization described as in \cite[2.1(d)]{di}.

(Step 3: local existence of compatible liftings) We shall prove that locally for the
\'etale topology on $X$, there exists a lifting $\wt{F}:\wt{X}\ra\wt{X}_1$ compatible
with $\wt{D}$ and compatible with $\wt{F}_{Y/S}$. Keeping the types (i), (ii)
and (iii) as in the proof of Lemma \ref{1.5} in mind, we divide the argument
into four cases.

Case (I): assume $\divisor_0(y)\subset\wt{A}$ and $\wt{F}^*_{Y/S}(y\otimes 1)=(1+pa)y^p$.
Then we define $\wt{F}^*(x_1\otimes 1)=(1+pa)x_1^p$ and
$\wt{F}^*(x_i\otimes 1)=x_i^p$ ($i\geq 2$), where $x_1$ is the coordinate
for the fiber over $\divisor_0(y)$, and $x_i$ ($i\geq 2$) are the coordinates for
the divisor $\wt{D}_h$.

Case (II): assume $\divisor_0(y)\subset\wt{E}$ and $\wt{F}^*_{Y/S}(y\otimes 1)=(1+pa)y^p$.
Then we define $\wt{F}^*(x_1\otimes 1)=(1+pa)x_1^p$ and
$\wt{F}^*(x_i\otimes 1)=x_i^p$ ($i\geq 2$), where $x_1$ is a pre-chosen
coordinate for the fiber over $\divisor_0(y)$, and $x_i$ ($i\geq 2$) are the other
coordinates for the fiber over $\divisor_0(y)$ or the coordinates for $\wt{D}_h$.

Case (III): assume $\divisor_0(y)\not\subset\wt{E}_a$ and $\wt{F}^*_{Y/S}(y\otimes 1)=y^p+pb$.
Then we define $\wt{F}^*(x_1\otimes 1)=x_1^p+pb$ and
$\wt{F}^*(x_i\otimes 1)=x_i^p$ ($i\geq 2$), where $x_1$ is the coordinate
for the fiber over $\divisor_0(y)$, and $x_i$ ($i\geq 2$) are the coordinates for
$\wt{D}_h$.

Case (IV): assume that all $x_i$ are not the coordinates for the fiber over
$\divisor_0(y)$. Then we define $\wt{F}^*(x_i\otimes 1)=x_i^p$.

It is easy to check that $\wt{F}:\wt{X}\ra\wt{X}_1$ constructed above
is a lifting of the relative Frobenius of $X$ over $S$, which is compatible
with $\wt{D}$ and compatible with $\wt{F}_{Y/S}$.
\end{proof}

\begin{rem}\label{2.3}
(i) If $f$ is smooth, then the existence of a lifting of $(X',D'_h)$
over $\wt{Y}$ such that $\wt{X}'$ is smooth over $\wt{Y}$ and
$\wt{D}'_h$ is RSNC over $\wt{Y}$, gives rise to the decomposition
of $\tau_{<p}F_{X/Y*}\Omega^\bullet_{X/Y}(\log D_h)$.
Moreover, the gerbe of splittings of $\tau_{\leq 1}F_{X/Y*}\Omega^
\bullet_{X/Y}(\log D_h)$ is canonically isomorphic to the gerbe of
liftings of $(X',D'_h)$ over $\wt{Y}$ (see \cite[4.2.3]{di}).

(ii) Under the hypotheses of Theorem \ref{2.2}, suppose that $f$ is of
relative dimension $\leq p$ and $H^{p+1}(X',(\Omega^p_{X'/Y}(\log D'/E_a))
^\vee)=0$ (this is the case for example if $Y$ is affine and $f$ is proper),
then $F_{X/Y*}\Omega^\bullet_{X/Y}(\log D/E_a)$ is decomposable, i.e.\
there is an isomorphism in $D(X',\OO_{X'})$
\[ \bigoplus_i\Omega^i_{X'/Y}(\log D'/E_a)[-i]\stackrel{\sim}{\ra}F_{X/Y*}
\Omega^\bullet_{X/Y}(\log D/E_a), \]
which induces the Cartier isomorphism $C^{-1}$ on $\HH^i$. The proof of
the decomposition is analogous to that of \cite[3.7(b) and 4.2.3]{di}.
\end{rem}

We shall state some corollaries for $\Omega^\bullet_{X/Y}(\log D/E_a)$
and omit their proofs, which are analogous to those in \cite[\S 2]{il90}.

\begin{cor}\label{2.4}
Under the hypotheses of Theorem \ref{2.2}, suppose further that $f$
is proper. Then
\begin{itemize}
\item[(i)] For any $i+j<p$, the $\OO_Y$-modules $R^jf_*\Omega^i_{X/Y}
(\log D/E_a)$ are locally free of finite type, and of formation compatible
with any base change $Z\ra Y$.
\item[(ii)] The Hodge spectral sequence $E_1^{ij}=R^jf_*\Omega^i_{X/Y}
(\log D/E_a)\Rightarrow\bR^{i+j}f_*\Omega^\bullet_{X/Y}(\log D/E_a)$
satisfies $E_1^{ij}=E_\infty^{ij}$ for any $i+j<p$.
\item[(iii)] If $f$ is of relative dimension $\leq p$, then (i) and (ii)
are valid for any $i,j$.
\end{itemize}
\end{cor}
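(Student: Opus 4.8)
The plan is to reduce the statement to a fibrewise degeneration and then propagate it over $Y$ by cohomology and base change. First I would observe that $f$ is flat (both local models of a semistable reduction are) and that each $\Omega^i_{X/Y}(\log D/E_a)$ is locally free over $\OO_X$ by Lemma \ref{1.5}; hence every term of $\Omega^\bullet_{X/Y}(\log D/E_a)$, and of $F_{X/Y*}\Omega^\bullet_{X/Y}(\log D/E_a)$, is flat over $Y$, and the quasi-isomorphism $\phi_{(\wt{f},\wt{F}_{Y/S})}$ of Theorem \ref{2.2} is of formation compatible with base change, being built from the Cartier isomorphism and the Frobenius lifting $\wt{F}_{Y/S}$. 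Since $F_{X/Y}$ is finite we have $\bR f_*=\bR f'_*\circ F_{X/Y*}$, so applying $\bR f'_*$ to $\phi_{(\wt{f},\wt{F}_{Y/S})}$ gives, for every $n<p$, an isomorphism
\[ \bR^n f_*\Omega^\bullet_{X/Y}(\log D/E_a)\cong\bigoplus_{i+j=n}R^jf'_*\Omega^i_{X'/Y}(\log D'/E_a), \]
whose formation is compatible with derived base change and which therefore restricts to each fibre; the truncation $\tau_{<p}$ is harmless here because $\tau_{\geq p}$ of the complex contributes nothing to $\bR^n f_*$ in total degree $<p$.

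Next I would pass to the fibres. Fix $y\in Y$. As all sheaves involved are $Y$-flat, the decomposition restricts to the fibre $X_y$, and the fibre of $X'$ over $y$ is the relative Frobenius twist of $X_y$ over $k(y)$; flat base change along $F_{k(y)}:k(y)\ra k(y)$ gives $\dim_{k(y)}H^j(X'_y,\Omega^i_{X'_y})=\dim_{k(y)}H^j(X_y,\Omega^i_{X_y})$. Hence the displayed isomorphism, read on the fibre, becomes $\dim_{k(y)}\bH^n(X_y,\Omega^\bullet_{X_y/k(y)})=\sum_{i+j=n}\dim_{k(y)}H^j(X_y,\Omega^i_{X_y/k(y)})$ for $n<p$. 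Since one always has $\dim\bH^n\leq\sum_{i+j=n}\dim H^j$, this equality is exactly the $E_1$-degeneration of the Hodge--de Rham spectral sequence of the fibre $X_y$ in total degree $<p$.

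To globalize I would invoke Grothendieck's cohomology-and-base-change theorem (the exchange criterion, EGA III 7.7). Granting that the de Rham sheaves $\bR^n f_*\Omega^\bullet_{X/Y}(\log D/E_a)$ are locally free and of formation compatible with base change for $n<p$, so that the function $b^n(y)=\dim_{k(y)}\bH^n(X_y,\Omega^\bullet_{X_y})$ is locally constant, the fibrewise degeneration $b^n(y)=\sum_{i+j=n}h^{ij}(y)$ together with the upper semicontinuity of each Hodge number $h^{ij}(y)=\dim_{k(y)}H^j(X_y,\Omega^i_{X_y})$ forces each $h^{ij}$ to be locally constant: a sum of upper-semicontinuous functions that equals a locally constant function admits no jump. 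The exchange criterion then yields that $R^jf_*\Omega^i_{X/Y}(\log D/E_a)$ is locally free of finite type and of formation compatible with base change for $i+j<p$, proving (i); comparing ranks in the displayed isomorphism gives $E_1^{ij}=E_\infty^{ij}$ for $i+j<p$, proving (ii). For (iii), I would localize so that $Y$ is affine, whence the vanishing hypothesis of Remark \ref{2.3}(ii) holds and, when $f$ has relative dimension $\leq p$, upgrades the truncated decomposition to a genuine isomorphism $F_{X/Y*}\Omega^\bullet_{X/Y}(\log D/E_a)\cong\bigoplus_i\Omega^i_{X'/Y}(\log D'/E_a)[-i]$ in all degrees; the same argument then runs with no restriction on $i,j$.

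The main obstacle is precisely the local freeness (equivalently, base-change compatibility) of the de Rham hypercohomology $\bR^n f_*\Omega^\bullet$ that I granted above: fibrewise degeneration alone does not force it, since the Hodge and de Rham fibre dimensions could in principle jump in tandem. Establishing it is the delicate point where the argument must follow Illusie's \S 2 closely, feeding the fibrewise degeneration into the exchange criterion by an induction on the degree so that surjectivity of the base-change maps propagates; alternatively one may derive the local freeness of $\bR^n f_*\Omega^\bullet$ from the integrable Gauss--Manin connection it carries. Once local constancy of the de Rham fibre dimensions is secured, the semicontinuity argument and the cohomology-and-base-change formalism dispatch (i)--(iii) routinely.
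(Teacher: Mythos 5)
The paper gives no proof of Corollary \ref{2.4} --- it is explicitly omitted and referred to \cite[\S 2]{il90}, which in turn reduces to \cite[4.1.2]{di} --- so your proposal must be measured against that standard argument, and it falls short at exactly the point you flag yourself. You \emph{grant} that the de Rham sheaves $\bR^nf_*\Omega^\bullet_{X/Y}(\log D/E_a)$ are locally free and of formation compatible with base change for $n<p$; everything downstream (local constancy of $b^n(y)$, hence of each $h^{ij}(y)$ by semicontinuity, hence the exchange criterion) hangs on this, and without it the argument produces nothing: both $y\mapsto b^n(y)$ and $y\mapsto\sum_{i+j=n}h^{ij}(y)$ are merely upper semicontinuous, and their pointwise equality (your fibrewise degeneration, which is correctly derived) does not force either to be locally constant. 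Neither of your proposed repairs closes this. The Gauss--Manin connection on $\bH$ has logarithmic poles along $E_a$, so Katz's criterion (a coherent module with an integrable connection on a smooth variety over a field is locally free) does not apply; and even where it did, local freeness would not yield compatibility with arbitrary --- in particular non-reduced --- base changes $Z\ra Y$, which is part of assertion (i) and cannot be seen on fibres over points at all. The ``induction on the degree propagating surjectivity of base-change maps'' is not an argument as stated: fibrewise degeneration gives no surjectivity statement over a non-reduced base to start such an induction.

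The missing idea is the reduction to $Y=\Spec A$ with $A$ artinian local (via the local criterion of flatness and passage to the limit), where the decomposition is exploited \emph{over $A$}, not just over the residue field $k$. The hypotheses of Theorem \ref{2.2} are stable under twisting by the (given, lifted) Frobenius, so the isomorphism $\bR^nf_*\Omega^\bullet\cong\bigoplus_{i+j=n}R^jf'_*\Omega^i_{X'/Y}$ can be iterated through the tower of twists $X^{(m)}=X\times_{A,F_A^m}A$; once $p^m$ exceeds the nilpotency order of $\mathfrak{m}_A$, the map $F_A^m$ factors through $k$, so $X^{(m)}$ is a \emph{constant} deformation of the closed fibre and its Hodge cohomology is visibly free of rank $h^{ij}(X_0)$ and compatible with base change. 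This supplies the lower bound $\mathrm{length}_A\,\bH^n\geq\mathrm{length}(A)\cdot\sum_{i+j=n}h^{ij}(X_0)$ that no fibrewise or semicontinuity consideration can produce; combined with the two upper bounds $\mathrm{length}_A\,\bH^n\leq\sum\mathrm{length}_AH^j(X,\Omega^i)\leq\mathrm{length}(A)\sum h^{ij}(X_0)$, a descending induction on $m$ forces all inequalities to be equalities, which is precisely (i) and (ii) at once. Your fibrewise degeneration is a consequence of this count, not a substitute for it; part (iii) is fine modulo the same point, your use of Remark \ref{2.3}(ii) to remove the truncation being correct.
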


\begin{cor}\label{2.5}
Let $K$ be a field of characteristic zero, $S=\Spec K$, $X,Y$ smooth
$S$-schemes, and $f:X\ra Y$ a proper $E$-semistable $S$-morphism with an adapted
divisor $D$ as in Definition \ref{1.2}. Then
\begin{itemize}
\item[(i)] The $\OO_Y$-modules $R^jf_*\Omega^i_{X/Y}(\log D/E_a)$ are
locally free of finite type, and of formation compatible with any base
change $T\ra Y$.
\item[(ii)] The Hodge spectral sequence $E_1^{ij}=R^jf_*\Omega^i_{X/Y}
(\log D/E_a)\Rightarrow\bR^{i+j}f_*\Omega^\bullet_{X/Y}(\log D/E_a)$
degenerates in $E_1$.
\end{itemize}
\end{cor}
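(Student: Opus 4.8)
The plan is to deduce this characteristic-zero statement from its positive-characteristic counterpart, Corollary \ref{2.4}, by the spreading-out and reduction-modulo-$p$ technique of Deligne–Illusie, exactly as in \cite[\S 2]{il90}. Note that the hypotheses on the existence of a lifting and on $\dim(X/S)<p$ are absent here precisely because they will be produced, for suitable primes $p$, by the spreading-out construction; the characteristic-zero assertion is thereby unconditional.

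First I would descend the whole configuration to a finitely generated ring. Since $X$, $Y$, $f$, $E$, and the adapted decomposition $D=E_X+D_a+D_h$ with $A=f(D_a)$ and $E_a=E+A$ involve only finitely many equations and sections, there is a finitely generated $\Z$-subalgebra $R\subset K$ and models $f_R:X_R\ra Y_R$ over $S_R=\Spec R$, together with divisors $E_R$, $D_R$, carrying all this data, whose base change along $R\ra K$ recovers the original situation. The conditions that $X_R,Y_R$ be smooth over $S_R$, that $f_R$ be $E_R$-semistable with $D_R$ adapted (so in particular that $D_R$, $D_{hR}$, and $E_R+A_R$ be RSNC over the appropriate bases), and that $f_R$ be proper, are all open or constructible; after inverting finitely many elements of $R$, and arranging moreover that $R$ be smooth over $\Z[1/N]$ for some $N$, I may assume they hold over all of $\Spec R$. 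Since $R$ is a domain, $\Spec R$ is integral, hence connected.

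Next I would pass to a closed fiber of good characteristic. A closed point $s\in\Spec R$ has residue field $\kappa(s)$ finite, of some characteristic $p$; after enlarging $N$ to contain all primes $\le\dim(X/S)$, any such $p$ satisfies $p>\dim(X/S)=\dim(X_s/\kappa(s))$. The required lifting over $\Z/p^2\Z$ is furnished automatically by the global model: because $R$ is smooth over $\Z[1/N]$ and $\Spec\kappa(s)\hookrightarrow\Spec W_2(\kappa(s))$ is a nilpotent thickening, the infinitesimal lifting criterion lifts the point $s:\Spec\kappa(s)\ra\Spec R$ to a morphism $\Spec W_2(\kappa(s))\ra\Spec R$; base-changing the entire model $f_R$ with its divisors along this map yields a lifting $\wt f_s:(\wt X_s,\wt D_s)\ra(\wt Y_s,\wt E_{as})$ of $f_s:(X_s,D_s)\ra(Y_s,E_{as})$ over $\wt S=\Spec W_2(\kappa(s))$ in the sense of Definition \ref{2.1}. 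Thus all the hypotheses of Corollary \ref{2.4} are met for $f_s$; since the relative dimension of $f$ is $\le\dim(X/S)<p$, part (iii) there applies and gives, over the closed fiber, the local freeness and base-change compatibility of the sheaves $R^jf_{s*}\Omega^i_{X_s/Y_s}(\log D_s/E_{as})$ for all $i,j$, together with the degeneration in $E_1$ of the relative Hodge spectral sequence.

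Finally I would transfer these conclusions from the closed fiber back to the generic fiber, which is the original situation over $K$. After shrinking $\Spec R$ once more, cohomology-and-base-change makes the coherent $\OO_{Y_R}$-modules $R^jf_{R*}\Omega^i_{X_R/Y_R}(\log D_R/E_{aR})$ and the hypercohomology sheaves $\bR^mf_{R*}\Omega^\bullet_{X_R/Y_R}(\log D_R/E_{aR})$ flat over $R$ and of formation compatible with the base changes $\Spec\kappa(s)\ra\Spec R$ and $\Spec K\ra\Spec R$; hence their fiber ranks are locally constant, and so constant, on the integral scheme $\Spec R$. Degeneration in $E_1$ is equivalent to the numerical identity equating, in each total degree, the sum of the Hodge ranks with the rank of the abutment; this identity holds over the good closed fiber by the previous step, and by constancy of ranks it therefore holds over the generic fiber as well, which yields (ii). The same constancy, combined with the local freeness already established over the closed fiber, gives the local freeness and base-change assertion (i) over $K$. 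The main obstacle is exactly this last transfer: one must ensure, by the appropriate shrinking and base-change theorems, that both the Hodge sheaves and the de Rham abutment are locally free and commute with base change over $\Spec R$, so that the numerical characterization of $E_1$-degeneration can be propagated from the special to the generic fiber. This is the step that genuinely consumes the positive-characteristic input and mirrors the corresponding passage in \cite[\S 2]{il90}.
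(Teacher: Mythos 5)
Your proof is correct and is essentially the argument the paper intends: the paper omits the proof of Corollary \ref{2.5}, referring to \cite[\S 2]{il90}, where the characteristic-zero statement is deduced from the characteristic-$p$ one by exactly this spreading-out over a finitely generated $\Z$-algebra smooth over $\Z[1/N]$, passage to a closed fiber of large residue characteristic with the $W_2$-lifting supplied by the infinitesimal criterion, application of Corollary \ref{2.4}(iii), and transfer back to the generic fiber via cohomology-and-base-change and the numerical criterion for $E_1$-degeneration. No substantive gap; your reduction is the standard Deligne--Illusie technique of \cite[2.7]{di}.
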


\begin{cor}\label{2.6}
Under the hypotheses of Corollary \ref{2.4}, suppose further that
$f$ is of purely relative dimension $d\leq p$ and $S$ is locally
noetherian and regular. Let $\LL$ be an $f$-ample invertible
$\OO_X$-module. Then we have
\begin{eqnarray}
R^jf_*(\LL^{-1}\otimes\Omega^i_{X/Y}(\log D/E_a))=0,
\quad \forall\,\, i+j<d, \nonumber \\
R^jf_*(\LL(-D_h)\otimes\Omega^i_{X/Y}(\log D/E_a))=0,
\quad \forall\,\, i+j>d. \nonumber
\end{eqnarray}
\end{cor}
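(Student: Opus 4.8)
The plan is to follow the Deligne--Illusie derivation of Akizuki--Nakano vanishing from the decomposition, adapted to the relative situation. Since $R^jf_*$ is local on $Y$, I may assume $Y$ is affine; then $f$ is proper, $d\leq p$, and $H^{p+1}(X',(\Omega^p_{X'/Y}(\log D'/E_a))^\vee)=0$, so Remark \ref{2.3}(ii) provides a decomposition $\bigoplus_i\Omega^i_{X'/Y}(\log D'/E_a)[-i]\stackrel{\sim}{\ra}F_{X/Y*}\Omega^\bullet_{X/Y}(\log D/E_a)$ in $D(X',\OO_{X'})$. First I would observe that the two displayed vanishings are exchanged by relative Serre duality: as $f$ is proper with $d$-dimensional Gorenstein fibres and $\Omega^d_{X/Y}(\log D/E_a)\cong\omega_{X/Y}(D_h)$ by Remark \ref{1.6}(2), one has $\Omega^i_{X/Y}(\log D/E_a)^\vee\otimes\omega_{X/Y}\cong\Omega^{d-i}_{X/Y}(\log D/E_a)(-D_h)$, whence $R^jf_*(\LL^{-1}\otimes\Omega^i_{X/Y}(\log D/E_a))$ is dual (up to an $\omega_{Y/S}$-twist) to $R^{d-j}f_*(\LL(-D_h)\otimes\Omega^{d-i}_{X/Y}(\log D/E_a))$, and $i+j<d$ corresponds exactly to $(d-i)+(d-j)>d$. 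Thus it suffices to prove the first vanishing.

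The key identity comes from the relative Frobenius $F=F_{X/Y}\colon X\ra X'$ together with the projection $\pi\colon X'\ra X$, for which $\pi\circ F=F_X$ and hence $F^*\pi^*\LL=\LL^{\otimes p}$. Writing $\LL':=\pi^*\LL$, the projection formula for the finite morphism $F$ (so $F_*$ is exact) and $f'\circ F=f$ give $\bR^mf_*(\LL^{-p}\otimes\Omega^\bullet_{X/Y}(\log D/E_a))\cong\bR^mf'_*(\LL'^{-1}\otimes F_*\Omega^\bullet_{X/Y}(\log D/E_a))$, and the decomposition turns the right-hand side into $\bigoplus_{i+j=m}R^jf'_*(\LL'^{-1}\otimes\Omega^i_{X'/Y}(\log D'/E_a))$. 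Since $Y$ is regular (being smooth over the regular scheme $S$), $F_Y$ is flat by Kunz's theorem, so flat base change along $F_Y$ identifies each summand with $F_Y^*R^jf_*(\LL^{-1}\otimes\Omega^i_{X/Y}(\log D/E_a))$; as $F_Y$ is faithfully flat this summand vanishes iff $R^jf_*(\LL^{-1}\otimes\Omega^i_{X/Y}(\log D/E_a))=0$. Consequently, the first vanishing in total degree $m$ is equivalent to $\bR^mf_*(\LL^{-p}\otimes\Omega^\bullet_{X/Y}(\log D/E_a))=0$.

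It then remains to prove $\bR^mf_*(\LL^{-p}\otimes\Omega^\bullet_{X/Y}(\log D/E_a))=0$ for $m<d$. The twist is legitimate because $\LL^{-p}=F^*\LL'^{-1}$ carries the canonical connection with vanishing $p$-curvature, so $\LL^{-p}\otimes\Omega^\bullet_{X/Y}(\log D/E_a)$ is a genuine complex, and the same holds for all iterated relative Frobenius twists $\LL^{-p^{s}}$. I would argue by descending induction on the Frobenius power, using two facts: (a) the Hodge spectral sequence $E_1^{ij}=R^jf_*(\MM\otimes\Omega^i_{X/Y}(\log D/E_a))\Rightarrow\bR^{i+j}f_*(\MM\otimes\Omega^\bullet_{X/Y}(\log D/E_a))$ shows that vanishing of all $E_1$-terms with $i+j=m$ forces $\bR^mf_*(\MM\otimes\Omega^\bullet)=0$; and (b) the equivalence of the previous paragraph, applied with $\LL$ replaced by the $f$-ample bundle $\LL^{p^{s}}$, reads: the first vanishing in degree $m$ for $\LL^{p^{s}}$ is equivalent to $\bR^mf_*(\LL^{-p^{s+1}}\otimes\Omega^\bullet)=0$. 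Combining (a) and (b), the vanishing of $R^jf_*(\LL^{-p^{s+1}}\otimes\Omega^i)$ for all $i+j=m$ implies that for $\LL^{-p^{s}}$, so the case of $\LL^{-p^{s+1}}$ descends to that of $\LL^{-p^{s}}$ and finally to $\LL^{-1}$. The base case is supplied by vanishing for large powers: for $m<d$ and $i+j=m$ one has $j\leq m<d$, so $d-j\geq1$, and relative Serre duality rewrites $R^jf_*(\LL^{-p^{r}}\otimes\Omega^i)$ as the dual of $R^{d-j}f_*(\LL^{p^{r}}\otimes\Omega^{d-i}(-D_h))$, which vanishes for all sufficiently large $r$ by Serre's theorem (relative Serre vanishing, EGA III), since $\LL$ is $f$-ample and $d-j>0$.

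The main obstacle is the bookkeeping around the relative Frobenius rather than any single deep point: one must set up the projection-formula identity and the flat-base-change isomorphism along $F_Y$ correctly (the latter relying essentially on the regularity hypothesis, through Kunz's theorem), and then run the descending induction while tracking the de Rham structure on each $\LL^{-p^{s}}$-twist through the iterated relative Frobenius. All of these are the relative analogues of the corresponding steps in \cite[\S2]{il90}, and the remaining verifications are routine.
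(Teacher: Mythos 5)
Your overall architecture is exactly the intended one: the paper omits this proof and defers to \cite[\S 2]{il90}, and what you describe --- reduce to $Y$ affine so that Remark \ref{2.3}(ii) gives the decomposition, use the projection formula along the finite relative Frobenius $F_{X/Y}$ together with flat base change along $F_Y$ (correctly traced to Kunz and hence to the regularity hypothesis on $S$), and run the descending induction on Frobenius powers with relative Serre vanishing supplying the base case --- is that argument. The part of your proof establishing $R^jf_*(\LL^{-1}\otimes\Omega^i_{X/Y}(\log D/E_a))=0$ for $i+j<d$ is correct as written; in the base case the duality you invoke is unproblematic because for $r\gg 0$ all higher direct images of $\LL^{p^r}\otimes\Omega^{d-i}(-D_h)$ vanish, so $\bR f_*$ of it is a single locally free sheaf in degree $0$ and Grothendieck duality cleanly concentrates $\bR f_*(\LL^{-p^r}\otimes\Omega^i)$ in degree $d$.

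The step that needs more care is the opening claim that the two displayed vanishings ``are exchanged by relative Serre duality,'' with $R^jf_*(\LL^{-1}\otimes\Omega^i)$ dual to $R^{d-j}f_*(\LL(-D_h)\otimes\Omega^{d-i})$. Over a base that is not a field there is no such degree-by-degree duality of coherent sheaves in general: Grothendieck duality only gives $\bR f_*(\LL(-D_h)\otimes\Omega^{d-i})[d]\cong\bR\mathcal{H}\!om_Y(\bR f_*(\LL^{-1}\otimes\Omega^i),\OO_Y)$, and knowing that the complex $K=\bR f_*(\LL^{-1}\otimes\Omega^i)$ has cohomology concentrated in degrees $\geq d-i$ does not bound the degrees in which $\bR\mathcal{H}\!om_Y(K,\OO_Y)$ has cohomology (the relevant input is the Tor-amplitude of $K$, which is a priori only $[0,d]$). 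Moreover you cannot sidestep this by running the Frobenius descent directly on the positive twist, because powers of $\LL(-D_h)$ need not be $f$-ample, so Serre vanishing gives no base case there; the duality step therefore carries real weight in this corollary. The repair is to upgrade the first vanishing to a statement on fibres: every ingredient of your argument (the decomposition of Remark \ref{2.3}(ii), the Hodge spectral sequence, the projection formula, Serre vanishing) is compatible with base change to $\Spec k(y)$, which is regular, so $H^j(X_y,\LL^{-1}\otimes\Omega^i|_{X_y})=0$ for $i+j<d$ and every $y\in Y$. Since the fibres are projective, equidimensional and Gorenstein, honest Serre duality over $k(y)$ then gives $H^j(X_y,\LL(-D_h)\otimes\Omega^i|_{X_y})=0$ for $i+j>d$, and because for fixed $i$ this is an upward-closed range of $j$, the standard descending induction via cohomology and base change converts the fibrewise vanishing into $R^jf_*(\LL(-D_h)\otimes\Omega^i)=0$ for $j>d-i$. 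With this substitution your proof is complete and coincides with the one the paper intends.
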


\begin{cor}\label{2.7}
Under the hypotheses of Corollary \ref{2.5}, let $\LL$ be an $f$-ample
invertible $\OO_X$-module. Then we have
\begin{eqnarray}
R^jf_*(\LL^{-1}\otimes\Omega^i_{X/Y}(\log D/E_a))=0,
\quad \forall\,\, i+j<d, \nonumber \\
R^jf_*(\LL(-D_h)\otimes\Omega^i_{X/Y}(\log D/E_a))=0,
\quad \forall\,\, i+j>d. \nonumber
\end{eqnarray}
\end{cor}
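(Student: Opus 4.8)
The plan is to deduce this characteristic-zero statement from its positive-characteristic counterpart Corollary \ref{2.6} by the standard spreading-out and reduction-modulo-$p$ technique of Deligne--Illusie \cite{di}, which is exactly the template the present section is following. Set $\GG=\LL^{-1}\otimes\Omega^i_{X/Y}(\log D/E_a)$; the case of $\LL(-D_h)\otimes\Omega^i_{X/Y}(\log D/E_a)$ is handled identically using the second vanishing of Corollary \ref{2.6}, and in fact follows from the first by relative Serre duality, since the isomorphism $\Omega^d_{X/Y}(\log D/E_a)\cong\omega_{X/Y}(D_h)$ of Remark \ref{1.6}(2) identifies $R^jf_*(\LL^{-1}\otimes\Omega^i_{X/Y}(\log D/E_a))$ with the dual of $R^{d-j}f_*(\LL(-D_h)\otimes\Omega^{d-i}_{X/Y}(\log D/E_a))$. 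First I would pick a finitely generated $\Z$-subalgebra $R\subset K$ and spread the entire situation out to an $R$-model $f_R:X_R\ra Y_R$ with divisors $D_R$, $E_R$, an invertible sheaf $\LL_R$, and $\GG_R=\LL_R^{-1}\otimes\Omega^i_{X_R/Y_R}(\log D_R/E_{aR})$, arranging (after shrinking $\Spec R$) that $X_R,Y_R$ are smooth over $R$, that $f_R$ is proper, flat and $E_R$-semistable of purely relative dimension $d$ with $D_R$ adapted, that $\LL_R$ is $f_R$-ample, and that $\GG_R$ is locally free on $X_R$ and hence flat over $Y_R$. By flat base change along $R\ra K$, proving $R^jf_*\GG=0$ is equivalent to showing that the coherent sheaf $\MM:=R^jf_{R*}\GG_R$ on $Y_R$ vanishes on the generic fibre $Y_K$ of $\pi:Y_R\ra\Spec R$.

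Next I would reduce modulo a large prime. For all but finitely many primes $p$ the reduction $S_0:=\Spec(R/pR)$ is regular (generic smoothness of $R$ over $\Z$, after one more shrinking of $\Spec R$), and $\wt{S}_0:=\Spec(R/p^2R)$ is flat over $\Z/p^2\Z$ with $\wt{S}_0\times_{\Z/p^2\Z}\F_p=S_0$ because $R$ is $\Z$-torsion-free. The $R/p^2R$-model $X_{R/p^2R}\ra Y_{R/p^2R}$ together with its divisors is then a lifting over $\Z/p^2\Z$ of $f_{S_0}:X_{S_0}\ra Y_{S_0}$ in the sense of Definition \ref{2.1}, the semistable, adapted and $f$-ample data all being preserved under base change. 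Fixing such a prime $p>\dim(X/S)$, every hypothesis of Corollary \ref{2.6} is satisfied over the regular base $S_0$, so $R^jf_{S_0*}\GG_{S_0}=0$ for all $i+j<d$.

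The heart of the argument is to transport this vanishing back to $\MM$. Since $\GG_R$ is flat over $Y_R$ and $p$ is a nonzerodivisor on $\OO_{Y_R}$ (as $Y_R$ is smooth over the $\Z$-flat ring $R$), the cartesian square defining $X_{S_0}$ over $X_R$ and $Y_{S_0}=Y_R\otimes_{\Z}\F_p$ over $Y_R$ is Tor-independent, so derived base change yields
\[ \bR f_{S_0*}\GG_{S_0}\;\cong\;\bR f_{R*}\GG_R\otimes^{\bR}_{\OO_{Y_R}}\OO_{Y_R}/p. \]
Resolving $\OO_{Y_R}/p$ by $[\OO_{Y_R}\stackrel{p}{\ra}\OO_{Y_R}]$ and taking cohomology sheaves gives a universal-coefficient short exact sequence, whose left-hand term provides an injection $\MM/p\MM\hookrightarrow R^jf_{S_0*}\GG_{S_0}=0$. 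Hence $\MM/p\MM=0$, and by Nakayama at points of $Y_R$ lying over $V(p)$ this means $\Supp\MM\cap\pi^{-1}(V(p))=\emptyset$, for each of the cofinitely many good primes $p$.

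Finally I would conclude by a support argument. The image $\pi(\Supp\MM)$ is constructible in $\Spec R$ by Chevalley's theorem and, by the previous step, is disjoint from $V(p)$ for all but finitely many $p$. If it contained the generic point of the irreducible scheme $\Spec R$ it would contain a dense open subset, which meets $V(p)$ for infinitely many $p$ since $\Spec R\ra\Spec\Z$ is dominant, a contradiction. Therefore $\MM$ vanishes on $Y_K$, giving the desired relative vanishing. I expect the main obstacle to be precisely the base-change bookkeeping of the third paragraph: verifying Tor-independence so that the characteristic-$p$ vanishing of $R^jf_{S_0*}\GG_{S_0}$ genuinely forces $\MM/p\MM=0$, and simultaneously ensuring that smoothness, flatness, semistability, adaptedness, $f$-ampleness and the $\Z/p^2\Z$-lifting all persist for cofinitely many primes after the successive shrinkings of $\Spec R$. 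These are routine but must be coordinated carefully.
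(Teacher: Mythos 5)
Your argument is correct and is exactly the proof the paper intends: Corollary \ref{2.7} is stated with proof omitted as ``analogous to those in \cite[\S 2]{il90}'', where the characteristic-zero corollaries are deduced from their characteristic-$p$ counterparts (here Corollary \ref{2.6}) by precisely this spreading-out and reduction-modulo-$p$ argument, and your Serre-duality remark relating the two vanishings agrees with Remark \ref{1.6}(2). The only point to make explicit is that the hypotheses of Corollary \ref{2.6} (via Theorem \ref{2.2}) include a global lifting $\wt{F}_{Y/S}$ compatible with $\wt{E}_a$, which need not exist on all of $Y_{S_0}$; since the conclusion $R^jf_*(\cdot)=0$ is local on $Y$, you should apply Corollary \ref{2.6} over an affine (\'etale) cover of $Y_{S_0}$ on which such a lifting exists.
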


\section{Variant with support}\label{S4}

In this section, let $S$ be a scheme of characteristic $p>0$, and
$f:X\ra Y$ an $E$-semistable $S$-morphism with an adapted divisor
$D$ as in Definition \ref{1.2}. For simplicity, denote $E_X+D_a$
by $D_v$. Tensoring (\ref{f1.1}) with $f^*\OO_Y(-E_a)=\OO_X(-D_v)$,
we obtain an exact sequence of locally free $\OO_X$-modules:
\begin{equation}
0\ra f^*\Omega^1_{Y/S}(E_a,0)\ra \Omega^1_{X/S}(D_v,D_h)\ra
\Omega^1_{X/Y}(D_v,D_h)\ra 0, \label{f4.1}
\end{equation}
where $\Omega^1_{Y/S}(E_a,0):=\Omega^1_{Y/S}(\log E_a)\otimes\OO_Y(-E_a)$,
$\Omega^1_{X/S}(D_v,D_h):=\Omega^1_{X/S}(\log D)\otimes\OO_X(-D_v)$, and
$\Omega^1_{X/Y}(D_v,D_h):=\Omega^1_{X/Y}(\log D/E_a)\otimes\OO_X(-D_v)$.
For any $i\geq 0$, define
\[ \Omega^i_{X/Y}(D_v,D_h)=\Omega^i_{X/Y}(\log D/E_a)
\otimes\OO_X(-D_v), \]
then it is easy to check that
$(\Omega^\bullet_{X/Y}(D_v,D_h),d)$ is a well-defined complex.

Let $F_Y$ be the absolute Frobenius of $Y$, and $F=F_{X/Y}$
the relative Frobenius of $X$ over $Y$. We have the following
commutative diagram with cartesian square:
\[
\xymatrix{
X \ar[dr]_f \ar[r]^F & X' \ar[d]^{f'} \ar[r] & X \ar[d]^{f} \\
                     & Y  \ar[r]^{F_Y}       & Y
}
\]

The differential $d$ of the complex $F_*\Omega^\bullet_{X/Y}(D_v,D_h)$
is $\OO_{X'}$-linear, so we would like to calculate its cohomology
$\OO_{X'}$-modules by a Cartier type isomorphism. Consider the Cartier
isomorphism (\ref{f1.6}) for $\Omega_{X/Y}^\bullet(\log D/E_a)$:
\[
C^{-1}:\Omega^*_{X'/Y}(\log D'/E_a)\ra\HH^*(F_*\Omega_{X/Y}^\bullet
(\log D/E_a)).
\]
Since $F_*\Omega_{X/Y}^\bullet(D_v,D_h)=F_*\Omega_{X/Y}^\bullet(\log D/E_a)
\otimes f^{\prime *}\OO_Y(-E_a)$, tensoring the above isomorphism with
$f^{\prime *}\OO_Y(-E_a)$, we have

\begin{prop}\label{4.1}
There is an isomorphism of graded $\OO_{X'}$-algebras:
\begin{equation}
C^{-1}:\Omega^*_{X'/Y}(D'_v,D'_h)\ra\HH^*(F_*\Omega^\bullet_{X/Y}
(D_v,D_h)), \label{f4.2}
\end{equation}
which is called the Cartier isomorphism of $\Omega^\bullet_{X/Y}(D_v,D_h)$.
\end{prop}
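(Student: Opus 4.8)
The plan is to deduce (\ref{f4.2}) directly from the Cartier isomorphism (\ref{f1.6}) for $\Omega^\bullet_{X/Y}(\log D/E_a)$ by twisting with the invertible sheaf $f^{\prime *}\OO_Y(-E_a)$ on $X'$, exactly along the lines indicated in the remark preceding the statement. The guiding observation is that, by the very definition in this section, $\Omega^i_{X/Y}(D_v,D_h)=\Omega^i_{X/Y}(\log D/E_a)\otimes\OO_X(-D_v)$ with $\OO_X(-D_v)=f^*\OO_Y(-E_a)$, so the whole complex $\Omega^\bullet_{X/Y}(D_v,D_h)$ is simply $\Omega^\bullet_{X/Y}(\log D/E_a)$ twisted by a line bundle pulled back from $Y$; since the relative differential is $f^{-1}\OO_Y$-linear, this twist is a well-defined complex, as already noted.

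First I would work out the effect of $F_*=F_{X/Y*}$ at the level of complexes. Using $f=f'\circ F$ from the cartesian square of this section, one has $f^*\OO_Y(-E_a)=F^*f^{\prime *}\OO_Y(-E_a)$, whence, by the projection formula,
\[
F_*\Omega^\bullet_{X/Y}(D_v,D_h)=F_*\bigl(\Omega^\bullet_{X/Y}(\log D/E_a)\otimes F^*f^{\prime *}\OO_Y(-E_a)\bigr)\cong F_*\Omega^\bullet_{X/Y}(\log D/E_a)\otimes_{\OO_{X'}}f^{\prime *}\OO_Y(-E_a),
\]
an isomorphism of complexes of $\OO_{X'}$-modules, recalling that the differential of $F_*\Omega^\bullet_{X/Y}$ is $\OO_{X'}$-linear. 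I would stress here that $f^{\prime *}\OO_Y(-E_a)$ is \emph{invertible}, hence locally free of rank one, so the projection formula is valid with no flatness hypothesis on $F$; this is the one point deserving care, since in the non-smooth type (iii) the relative Frobenius $F$ is not flat.

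Next I would pass to cohomology sheaves. Because $f^{\prime *}\OO_Y(-E_a)$ is flat (being invertible), tensoring with it commutes with $\HH^*$, so
\[
\HH^*\bigl(F_*\Omega^\bullet_{X/Y}(D_v,D_h)\bigr)\cong\HH^*\bigl(F_*\Omega^\bullet_{X/Y}(\log D/E_a)\bigr)\otimes_{\OO_{X'}}f^{\prime *}\OO_Y(-E_a).
\]
Tensoring the Cartier isomorphism (\ref{f1.6}) with $f^{\prime *}\OO_Y(-E_a)$ then produces an isomorphism whose source is $\Omega^*_{X'/Y}(\log D'/E_a)\otimes f^{\prime *}\OO_Y(-E_a)$; by the definition of the twisted complex on $X'$, with $f^{\prime *}\OO_Y(-E_a)=\OO_{X'}(-D'_v)$, this source is exactly $\Omega^*_{X'/Y}(D'_v,D'_h)$, which yields (\ref{f4.2}).

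Finally I would verify that (\ref{f4.2}) respects the graded-algebra structure. Since both $\Omega^*_{X'/Y}(D'_v,D'_h)$ and $\HH^*(F_*\Omega^\bullet_{X/Y}(D_v,D_h))$ are obtained from the graded algebras of (\ref{f1.6}) by tensoring with one and the same invertible sheaf, and since (\ref{f1.6}) is already a homomorphism of graded $\OO_{X'}$-algebras, the twisted map is multiplicative; this is routine once the identifications above are in place. I do not expect a genuine obstacle here: the entire content is that (\ref{f4.2}) is the image of (\ref{f1.6}) under a flat line-bundle twist, and the only place to remain vigilant is the use of the projection formula for the possibly non-flat relative Frobenius, which is legitimate precisely because the twisting sheaf is locally free of rank one.
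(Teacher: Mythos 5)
Your proposal is correct and follows essentially the same route as the paper: the paper likewise observes $F_*\Omega_{X/Y}^\bullet(D_v,D_h)=F_*\Omega_{X/Y}^\bullet(\log D/E_a)\otimes f^{\prime *}\OO_Y(-E_a)$ and obtains (\ref{f4.2}) by tensoring the Cartier isomorphism (\ref{f1.6}) with the invertible sheaf $f^{\prime *}\OO_Y(-E_a)$. Your added remarks on the projection formula for the non-flat relative Frobenius and on compatibility with the graded-algebra structure only make explicit what the paper leaves implicit.
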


Tensoring (\ref{2.2}) with $f^{\prime *}\OO_Y(-E_a)$, we have

\begin{thm}\label{4.2}
Let $f:X\ra Y$ be an $E$-semistable $S$-morphism with an adapted
divisor $D$, and $\wt{f}:(\wt{X},\wt{D})\ra (\wt{Y},\wt{E}_a)$ a lifting
of $f:(X,D)\ra (Y,E_a)$ over $\wt{S}$. Let $\wt{F}_{Y/S}:\wt{Y}\ra\wt{Y}_1$
be a lifting over $\wt{S}$ of the relative Frobenius morphism $F_{Y/S}:
Y\ra Y_1$ of $Y$ over $S$, which is compatible with the divisor $\wt{E}_a$.
Then there is a canonical isomorphism in $D(X',\OO_{X'})$:
\begin{equation}
\phi_{(\wt{f},\wt{F}_{Y/S})}:\bigoplus_{i<p}\Omega^i_{X'/Y}(D'_v,D'_h)
[-i]\stackrel{\sim}{\ra}\tau_{<p}F_*\Omega^\bullet_{X/Y}(D_v,D_h),
\label{f4.3}
\end{equation}
which induces the Cartier isomorphism $C^{-1}$ (\ref{f4.2}) on $\HH^i$.
\end{thm}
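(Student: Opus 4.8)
The plan is to deduce (\ref{f4.3}) directly from Theorem \ref{2.2} by tensoring the isomorphism (\ref{f2.2}) with the invertible $\OO_{X'}$-module $f^{\prime*}\OO_Y(-E_a)$, exactly as announced in the sentence preceding the statement. Since tensoring by a locally free sheaf is an exact functor, it commutes with the formation of cohomology sheaves and hence with the good-truncation functor $\tau_{<p}$, so the only real work is to identify the two sides after the twist.

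First I would record the twisted identifications on both $X$ and $X'$. On $X$ one has $f^*\OO_Y(-E_a)=\OO_X(-D_v)$, and by the definitions of Section \ref{S4} the complex $\Omega^\bullet_{X/Y}(D_v,D_h)$ is by construction $\Omega^\bullet_{X/Y}(\log D/E_a)\otimes\OO_X(-D_v)$. The analogous relations hold on $X'$, namely $f^{\prime*}\OO_Y(-E_a)=\OO_{X'}(-D'_v)$ and $\Omega^i_{X'/Y}(D'_v,D'_h)=\Omega^i_{X'/Y}(\log D'/E_a)\otimes\OO_{X'}(-D'_v)$, proved by the same local computation. Consequently, tensoring the left-hand side of (\ref{f2.2}) with $f^{\prime*}\OO_Y(-E_a)$ turns each summand $\Omega^i_{X'/Y}(\log D'/E_a)[-i]$ into $\Omega^i_{X'/Y}(D'_v,D'_h)[-i]$, which is exactly the left-hand side of (\ref{f4.3}).

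For the right-hand side I would invoke the projection formula for the relative Frobenius $F=F_{X/Y}$. Since $f=f'\circ F$, we have $F^*f^{\prime*}\OO_Y(-E_a)=f^*\OO_Y(-E_a)=\OO_X(-D_v)$, whence $F_*\Omega^\bullet_{X/Y}(\log D/E_a)\otimes f^{\prime*}\OO_Y(-E_a)=F_*\Omega^\bullet_{X/Y}(D_v,D_h)$; this is precisely the identity already recorded just before Proposition \ref{4.1}. Because the twist by the invertible sheaf $f^{\prime*}\OO_Y(-E_a)$ is exact, it commutes with $\tau_{<p}$, so that $\bigl(\tau_{<p}F_*\Omega^\bullet_{X/Y}(\log D/E_a)\bigr)\otimes f^{\prime*}\OO_Y(-E_a)=\tau_{<p}F_*\Omega^\bullet_{X/Y}(D_v,D_h)$, the right-hand side of (\ref{f4.3}). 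Tensoring the isomorphism (\ref{f2.2}) by $f^{\prime*}\OO_Y(-E_a)$ therefore yields the desired isomorphism (\ref{f4.3}) in $D(X',\OO_{X'})$.

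Finally, for the compatibility on $\HH^i$: since the Cartier isomorphism (\ref{f4.2}) of Proposition \ref{4.1} was itself obtained by tensoring (\ref{f1.6}) with $f^{\prime*}\OO_Y(-E_a)$, the map induced by the tensored isomorphism on cohomology sheaves is the twist of the map induced by (\ref{f2.2}), which by Theorem \ref{2.2} is $C^{-1}$ of (\ref{f1.6}); after the twist this is exactly $C^{-1}$ of (\ref{f4.2}). There is essentially no obstacle here: the entire content is already packaged in Theorem \ref{2.2} and Proposition \ref{4.1}, and the only point demanding (routine) care is that tensoring by an invertible sheaf is exact and therefore commutes with both $\tau_{<p}$ and the passage to cohomology sheaves.
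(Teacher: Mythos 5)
Your proposal is correct and is essentially identical to the paper's own argument, which states Theorem \ref{4.2} as the result of tensoring the isomorphism (\ref{f2.2}) of Theorem \ref{2.2} with $f^{\prime*}\OO_Y(-E_a)$. You have merely spelled out the routine verifications (exactness of the twist, compatibility with $\tau_{<p}$ and with $\HH^i$, and the identifications from Section \ref{S4}) that the paper leaves implicit.
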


\begin{rem}\label{4.6}
Under the hypotheses of Theorem \ref{4.2}, suppose that $f$ is of
relative dimension $\leq p$ and $H^{p+1}(X',(\Omega^p_{X'/Y}(D'_v,D'_h))
^\vee)=0$ (this is the case for example if $Y$ is affine and $f$ is proper),
then $F_*\Omega^\bullet_{X/Y}(D_v,D_h)$ is decomposable, i.e.\
there is an isomorphism in $D(X',\OO_{X'})$:
\[ \bigoplus_i\Omega^i_{X'/Y}(D'_v,D'_h)[-i]\stackrel{\sim}{\ra}F_*
\Omega^\bullet_{X/Y}(D_v,D_h), \]
which induces the Cartier isomorphism $C^{-1}$ on $\HH^i$. The proof of
the decomposition is analogous to that of \cite[3.7(b) and 4.2.3]{di}.
\end{rem}

We shall state some corollaries for $\Omega^\bullet_{X/Y}(D_v,D_h)$
and omit their proofs, which are analogous to those in \cite[\S 2]{il90}.

\begin{cor}\label{4.7}
Under the hypotheses of Theorem \ref{4.2}, suppose further that $f$
is proper. Then
\begin{itemize}
\item[(i)] For any $i+j<p$, the $\OO_Y$-modules $R^jf_*\Omega^i_{X/Y}
(D_v,D_h)$ are locally free of finite type, and of formation compatible
with any base change $Z\ra Y$.
\item[(ii)] The Hodge spectral sequence $E_1^{ij}=R^jf_*\Omega^i_{X/Y}
(D_v,D_h)\Rightarrow\bR^{i+j}f_*\Omega^\bullet_{X/Y}(D_v,D_h)$ satisfies
$E_1^{ij}=E_\infty^{ij}$ for any $i+j<p$.
\item[(iii)] If $f$ is of relative dimension $\leq p$, then (i) and (ii)
are valid for any $i,j$.
\end{itemize}
\end{cor}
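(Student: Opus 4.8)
The plan is to follow the Deligne--Illusie argument reproduced in \cite[\S 2]{il90}, deducing the numerical and local-freeness statements from the decomposition of Remark \ref{4.6}. Write $\Omega^i:=\Omega^i_{X/Y}(D_v,D_h)$ and $\Omega^{\prime i}:=\Omega^i_{X'/Y}(D'_v,D'_h)$ for brevity. Since all three assertions are local on $Y$ and compatible with restriction to affine opens, I would first reduce to the case where $Y$ is affine; then $H^{p+1}(X',(\Omega^{\prime p})^\vee)=0$ by the remark, so Remark \ref{4.6} supplies a global decomposition $\bigoplus_{i<p}\Omega^{\prime i}[-i]\stackrel{\sim}{\ra}\tau_{<p}F_*\Omega^\bullet_{X/Y}(D_v,D_h)$ in $D(X',\OO_{X'})$, and when $f$ has relative dimension $\le p$ the full, untruncated decomposition $\bigoplus_i\Omega^{\prime i}[-i]\stackrel{\sim}{\ra}F_*\Omega^\bullet_{X/Y}(D_v,D_h)$, so that in case (iii) no truncation intervenes.

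Next I would pass from $X$ to $X'$ using the factorization $f=f'\circ F$ visible in diagram (\ref{f1.4}), where $F=F_{X/Y}$ is finite, hence affine, so that $\bR f_*=\bR f'_*\circ F_*$. Applying $\bR f'_*$ to the decomposition yields, in the range $i+j<p$ (resp.\ for all $i+j$ in case (iii)), a canonical isomorphism
\begin{equation*}
\bR^n f_*\Omega^\bullet_{X/Y}(D_v,D_h)\stackrel{\sim}{\ra}
\bigoplus_{i+j=n}R^jf'_*\Omega^{\prime i}.
\end{equation*}
I would then identify the right-hand terms with the Hodge sheaves of $f'$: since $f'$ is the base change of $f$ along $F_Y\colon Y\ra Y$ and $\Omega^{\prime i}$ is the pullback of $\Omega^i$ by (\ref{f1.2}), the sheaves $R^jf'_*\Omega^{\prime i}$ are controlled by $R^jf_*\Omega^i$ through this base change. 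Moreover the $\OO_{X'}$-linear Cartier isomorphism of Proposition \ref{4.1} identifies the graded pieces of $F_*\Omega^\bullet_{X/Y}(D_v,D_h)$ with the $\Omega^{\prime i}$, so the displayed splitting is compatible with the Hodge filtration and hence with the Hodge spectral sequence $E_1^{ij}=R^jf_*\Omega^i\Rightarrow\bR^{i+j}f_*\Omega^\bullet_{X/Y}(D_v,D_h)$.

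With this splitting in hand, local freeness and $E_1$-degeneration would be obtained together by the standard bootstrap. I would argue fibrewise first: for a point $y\in Y$ the decomposition restricts to the fibre, and the finiteness of $F$ together with flat base change along the Frobenius of $k(y)$ identifies $\dim_{k(y)}H^j(X'_y,\Omega^{\prime i}|_{X'_y})$ with $\dim_{k(y)}H^j(X_y,\Omega^i|_{X_y})$; the resulting equality $\sum_{i+j=n}\dim_{k(y)}H^j(X_y,\Omega^i|_{X_y})=\dim_{k(y)}\bH^n$ forces the fibrewise spectral sequence to degenerate. Then upper semicontinuity and cohomology-and-base-change (Grauert's criterion) promote this to the local freeness of $R^jf_*\Omega^i$ for $i+j<p$ and to compatibility with an arbitrary base change $Z\ra Y$, whence $E_1^{ij}=E_\infty^{ij}$. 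In case (iii), where the decomposition is untruncated, the same argument applies for all $(i,j)$.

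The hard part will be precisely this passage from the derived-category decomposition to the sheaf-level conclusions of local freeness and degeneration. A priori the Hodge sheaves $R^jf_*\Omega^i$ need not be locally free, so the base change along $F_Y$ relating $R^jf'_*\Omega^{\prime i}$ to $R^jf_*\Omega^i$ is not automatic; this is the apparent circularity that Deligne and Illusie resolve by the semicontinuity--Grauert bootstrap, and the same care must be exercised here, keeping track of the truncation range $i+j<p$ so that the decomposition of Remark \ref{4.6} genuinely controls the relevant terms.
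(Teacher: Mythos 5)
Your outline is the route the paper itself intends: the text states Corollary \ref{4.7} with the proof omitted as ``analogous to those in \cite[\S 2]{il90}'', and the argument there is precisely the Deligne--Illusie bootstrap you describe --- reduce to $Y$ affine, invoke the truncated decomposition of Theorem \ref{4.2} (the untruncated one of Remark \ref{4.6} only being needed for part (iii)), factor $\bR f_*=\bR f'_*\circ F_*$ through the finite morphism $F=F_{X/Y}$, and compare fibres via the field homomorphism $F\colon k(y)\ra k(y)$ to obtain $\sum_{i+j=n}\dim_{k(y)}H^j(X_y,\Omega^i_{X/Y}(D_v,D_h)|_{X_y})=\dim_{k(y)}\bH^n(X_y,\Omega^\bullet_{X/Y}(D_v,D_h)|_{X_y})$ for $n<p$. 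Up to and including the fibrewise degeneration, your proposal is correct and matches the reference.

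The gap is in the concluding step, which you yourself flag as the hard part but then dispatch with ``upper semicontinuity and cohomology-and-base-change (Grauert's criterion)''. This does not work in the stated generality, for two reasons. First, Grauert's criterion requires the base to be reduced, whereas here $Y$ is only smooth over an arbitrary characteristic-$p$ scheme $S$ and may well carry nilpotents (and over a non-reduced base, constancy of fibre dimension does not imply local freeness of $R^jf_*$); note also that the conclusion asserts compatibility with \emph{arbitrary} base changes $Z\ra Y$, so one cannot simply pass to $Y_{\mathrm{red}}$. Second, even granting reducedness, Grauert takes as input the local constancy of $y\mapsto\dim_{k(y)}H^j(X_y,\Omega^i|_{X_y})$, and the pointwise equality you derive from the decomposition does not by itself establish this: upper semicontinuity of each $h^{ij}$ together with the identity $h^n_{DR}=\sum_{i+j=n}h^{ij}$ gives nothing unless one already knows some of these functions are locally constant. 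The sources \cite[Cor.~2.4]{di} and \cite[\S 2]{il90} avoid both difficulties by working not with the individual cohomology sheaves but with bounded complexes of finite free modules representing $\bR f_*\Omega^i_{X/Y}(D_v,D_h)$ and $\bR f_*\Omega^\bullet_{X/Y}(D_v,D_h)$ universally (the formalism of EGA III \S 7, valid over any noetherian base), reading the decomposition directly on these perfect complexes localized at each point of $Y$; freeness, base-change compatibility and $E_1^{ij}=E_\infty^{ij}$ then come out together. You should substitute this mechanism for the semicontinuity--Grauert step; the remainder of your reduction, including part (iii) with the untruncated decomposition, then goes through as written.
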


\begin{cor}\label{4.8}
Let $K$ be a field of characteristic zero, $S=\Spec K$, $X,Y$ smooth
$S$-schemes, and $f:X\ra Y$ a proper $E$-semistable $S$-morphism with an
adapted divisor $D$ as in Definition \ref{1.2}. Then
\begin{itemize}
\item[(i)] The $\OO_Y$-modules $R^jf_*\Omega^i_{X/Y}(D_v,D_h)$ are
locally free of finite type, and of formation compatible with any base
change $T\ra Y$.
\item[(ii)] The Hodge spectral sequence $E_1^{ij}=R^jf_*\Omega^i_{X/Y}
(D_v,D_h)\Rightarrow\bR^{i+j}f_*\Omega^\bullet_{X/Y}(D_v,D_h)$
degenerates in $E_1$.
\end{itemize}
\end{cor}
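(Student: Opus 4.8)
The plan is to deduce the characteristic-zero assertions from their positive-characteristic counterpart, Corollary \ref{4.7}, by the spreading-out and reduction-modulo-$p$ technique of Deligne--Illusie. First I would spread the entire datum out over a finitely generated $\Z$-algebra. By the limit arguments of EGA IV \S 8 there exist a finitely generated $\Z$-subalgebra $A\subset K$, smooth proper $S_A$-schemes $X_A,Y_A$ over $S_A=\Spec A$, and a proper $E_A$-semistable $S_A$-morphism $f_A:X_A\ra Y_A$ with an adapted divisor $D_A=E_{X_A}+D_{a,A}+D_{h,A}$, whose base change along $A\ra K$ recovers $(X,Y,f,D,E)$; here semistability and the adaptedness of $D$, being \'etale-local product conditions on the fibres, survive all the base changes in play. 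After inverting finitely many elements of $A$ and an integer $N$ I may further assume that $A$ is smooth over $\Z[1/N]$ (in particular flat over $\Z$), and, by generic flatness together with the theorem on cohomology and base change over $S_A$, that the sheaves $R^jf_{A*}\Omega^i_{X_A/Y_A}(D_v,D_h)$ and $\bR^{i+j}f_{A*}\Omega^\bullet_{X_A/Y_A}(D_v,D_h)$ (with the evident spread-out divisors) are flat over $S_A$ and of formation compatible with base change $S_A'\ra S_A$; thus for every $s\in\Spec A$ their restrictions to $Y_s$ are the corresponding Hodge and de Rham sheaves of $f_s:X_s\ra Y_s$.

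Next I would reduce modulo a large prime. Choose a prime $p>\dim(X/K)=n$ with $p\nmid N$, and put $S=\Spec(A/pA)$, $\wt S=\Spec(A/p^2A)$. Since $A$ is flat over $\Z$, $\wt S$ is flat over $\Z/p^2\Z$ and $\wt S\times_{\Z/p^2\Z}\F_p=S$, so $\wt S$ is a lifting of $S$ over $\Z/p^2\Z$; smoothness of $A$ over $\Z[1/N]$ furnishes a lifting $F_{\wt S}$ of the absolute Frobenius of $S$, and compatible liftings of the relative Frobenius of $Y_S$ \'etale-locally on $Y_S$, which is enough since the assertions to be proved are local on $Y$. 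The base change $\wt f:=f_A\times_A(A/p^2A)$ is an $\wt E$-semistable lifting of $f_S:=f_A\times_A(A/pA)$ over $\wt S$ in the sense of Definition \ref{2.1}, with adapted divisor $\wt D$. Because $\dim(X_S/S)=n<p$, Corollary \ref{4.7}(iii) applies to $f_S$ and gives, for all $i,j$, that $R^jf_{S*}\Omega^i_{X_S/Y_S}(D_v,D_h)$ is locally free on $Y_S$ of formation compatible with base change, and that the relative Hodge spectral sequence of $f_S$ over $Y_S$ degenerates at $E_1$.

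Finally I would transfer these conclusions to the characteristic-zero fibre. The primes $p>n$ with $p\nmid N$ give a Zariski-dense set of closed points of $\Spec A$, since $A$ dominates $\Spec\Z$. By semicontinuity and constructibility, the set of $s\in\Spec A$ for which every Hodge sheaf $R^jf_{s*}\Omega^i_{X_s/Y_s}(D_v,D_h)$ is locally free on $Y_s$ and the relative spectral sequence of $f_s$ degenerates at $E_1$ is constructible; by the previous step it contains a dense set, hence the generic point $\eta$, and base change along $k(\eta)\hookrightarrow K$ then yields (i) and (ii) over $Y_K$. At the numerical level the degeneration transfers because the spectral sequence always satisfies $\sum_{i+j=n}\mathrm{rk}\,E_1^{ij}\ge\mathrm{rk}\,\bR^{n}f_*\Omega^\bullet_{X/Y}(D_v,D_h)$ locally on $Y$, with equality precisely in the degenerate case, so that the equality forced over $S$ persists to $K$. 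The main obstacle is exactly this transfer: one must arrange the spreading-out so that all Hodge and de Rham sheaves are flat over $S_A$ with base-change-compatible formation and of locally constant rank, making the comparison of ranks between the characteristic-$p$ and characteristic-zero fibres legitimate; by contrast, producing the $W_2$-lifting $\wt f$, the Frobenius lifts, and the bound $\dim(X/S)<p$ is routine.
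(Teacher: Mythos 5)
Your proposal is correct and follows essentially the same route the paper intends: the paper omits the proof of Corollary \ref{4.8}, referring to the arguments of \cite[\S 2]{il90}, which are exactly the spreading-out and reduction-modulo-$p$ technique you describe, reducing to the positive-characteristic statement (Corollary \ref{4.7}) via a $\Z/p^2\Z$-lifting for a large prime $p>\dim(X/S)$ and transferring local freeness and $E_1$-degeneration back to the generic fibre by flatness, base-change compatibility and the rank count. You also correctly isolate the only delicate point, namely arranging the spread-out Hodge and de Rham sheaves to be flat over the arithmetic base with formation compatible with base change.
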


\begin{cor}\label{4.9}
Under the hypotheses of Corollary \ref{4.7}, suppose further that
$f$ is of purely relative dimension $d\leq p$ and $S$ is locally
noetherian and regular. Let $\LL$ be an $f$-ample invertible
$\OO_X$-module. Then we have
\begin{eqnarray}
R^jf_*(\LL^{-1}\otimes\Omega^i_{X/Y}(D_v,D_h))=0,
\quad \forall\,\, i+j<d, \nonumber \\
R^jf_*(\LL(2D_v-D_h)\otimes\Omega^i_{X/Y}(D_v,D_h))=0,
\quad \forall\,\, i+j>d. \nonumber
\end{eqnarray}
\end{cor}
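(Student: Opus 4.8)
The plan is to deduce Corollary \ref{4.9} from the decomposition and Cartier isomorphism established in Theorem \ref{4.2} and Proposition \ref{4.1}, following the strategy by which the classical Kodaira–Akizuki–Nakano type vanishing is proved in \cite[\S 2]{il90} and \cite{di}. Since these are the vanishing statements whose $\Omega^\bullet_{X/Y}(\log D/E_a)$-analogues are recorded in Corollaries \ref{2.6} and \ref{2.7}, I expect the argument here to be the same one, merely transported through the twist by $\OO_X(-D_v)$ that turns $\Omega^\bullet_{X/Y}(\log D/E_a)$ into $\Omega^\bullet_{X/Y}(D_v,D_h)$, so the real content is already contained in Theorem \ref{4.2}.

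First I would treat the first vanishing, $R^jf_*(\LL^{-1}\otimes\Omega^i_{X/Y}(D_v,D_h))=0$ for $i+j<d$. The standard device is to apply the decomposition of Remark \ref{4.6} after twisting by an $f$-ample line bundle. More precisely, one works relatively over $Y$ (which, being locally noetherian and regular, may be assumed local or even affine so that the hypothesis $H^{p+1}(X',(\Omega^p_{X'/Y}(D'_v,D'_h))^\vee)=0$ of Remark \ref{4.6} holds and the complex is decomposable). Twisting the relative de Rham complex $F_*\Omega^\bullet_{X/Y}(D_v,D_h)$ by $\LL^{-1}$ and using that $F=F_{X/Y}$ is finite, one reduces the desired higher direct image vanishing to a vanishing for the individual graded pieces $\Omega^i_{X'/Y}(D'_v,D'_h)$ tensored with an $f'$-ample bundle, via the projection formula and the fact that $F_*\LL^{-p}$ is compatible with the Frobenius twist. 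Running the Serre-type induction on the relative dimension $d$ together with the hypercohomology spectral sequence furnished by the decomposition gives the first vanishing in the range $i+j<d$.

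The second vanishing, $R^jf_*(\LL(2D_v-D_h)\otimes\Omega^i_{X/Y}(D_v,D_h))=0$ for $i+j>d$, I would obtain from the first by relative Serre duality. The relative dualizing sheaf is identified in Remark \ref{1.6}(2) as $\Omega^d_{X/Y}(\log D/E_a)\cong\omega_{X/Y}(D_h)$; combined with the perfect pairing $\Omega^i_{X/Y}(\log D/E_a)\otimes\Omega^{d-i}_{X/Y}(\log D/E_a)\to\Omega^d_{X/Y}(\log D/E_a)$ coming from wedge product, one gets $\Omega^{d-i}_{X/Y}(\log D/E_a)\cong(\Omega^i_{X/Y}(\log D/E_a))^\vee\otimes\omega_{X/Y}(D_h)$. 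Untwisting by $\OO_X(-D_v)$ on both factors to pass to the sheaves $\Omega^\bullet_{X/Y}(D_v,D_h)$ is exactly what converts the Serre-dual of the first vanishing into a statement about $\LL(2D_v-D_h)$: the two factors of $-D_v$ from the two complexes add to $-2D_v$, which under duality becomes the $+2D_v$ appearing in the statement, and the $-D_h$ comes from $\omega_{X/Y}(D_h)^\vee$ paired against the dualizing sheaf $\omega_{X/Y}$. Applying relative duality to the first vanishing in the complementary range $(d-i)+(d-j)<d$, i.e.\ $i+j>d$, then yields the asserted second vanishing.

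The main obstacle, and the only place demanding care, is bookkeeping the divisor twists so that the duality computation produces precisely $\LL(2D_v-D_h)$ rather than some neighbouring twist; the sign conventions for $D_v$ versus $D_h$ in the definitions $\Omega^i_{X/Y}(D_v,D_h)=\Omega^i_{X/Y}(\log D/E_a)\otimes\OO_X(-D_v)$ must be tracked through both the wedge pairing and the identification of $\omega_{X/Y}(D_h)$. Everything else is formal: the homological input is entirely supplied by the decomposition of Theorem \ref{4.2} and Remark \ref{4.6} and the Cartier isomorphism of Proposition \ref{4.1}, exactly as in the corresponding $\log D/E_a$ corollaries, so as the paper itself notes these proofs run parallel to \cite[\S 2]{il90} and may be omitted.
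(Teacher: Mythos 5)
Your proposal is correct and follows essentially the route the paper intends: the paper omits the proof of Corollary \ref{4.9}, stating only that it is analogous to the arguments of \cite[\S 2]{il90}, and those arguments are precisely the ones you describe --- the decomposition of Remark \ref{4.6} over affine opens of $Y$ combined with the $\LL\mapsto\LL^p$ iteration and Serre vanishing for the range $i+j<d$, then relative Serre duality with $\omega_{X/Y}\cong\Omega^d_{X/Y}(\log D/E_a)(-D_h)$ for the range $i+j>d$. Your bookkeeping of the twists is also right: dualizing $\Omega^i_{X/Y}(D_v,D_h)$ undoes the $-D_v$ twist twice, producing the $2D_v$, and the $-D_h$ comes from $\omega_{X/Y}(D_h)^{-1}$ in the wedge-pairing identification.
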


\begin{cor}\label{4.10}
Under the hypotheses of Corollary \ref{4.8}, let $\LL$ be an $f$-ample
invertible $\OO_X$-module. Then we have
\begin{eqnarray}
R^jf_*(\LL^{-1}\otimes\Omega^i_{X/Y}(D_v,D_h))=0,
\quad \forall\,\, i+j<d, \nonumber \\
R^jf_*(\LL(2D_v-D_h)\otimes\Omega^i_{X/Y}(D_v,D_h))=0,
\quad \forall\,\, i+j>d. \nonumber
\end{eqnarray}
\end{cor}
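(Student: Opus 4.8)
The plan is to follow Illusie and deduce this characteristic-zero statement from its positive-characteristic counterpart, Corollary \ref{4.9}, by reduction modulo $p$. The decisive point is that, unlike Corollary \ref{4.9}, the hypotheses of Corollary \ref{4.8} contain no lifting assumption: after spreading the data out over a finitely generated $\Z$-algebra and specializing at a closed point of large residue characteristic, a lifting over $\Z/p^2\Z$ is supplied \emph{for free} by the chosen model, which is exactly what makes Corollary \ref{4.9} applicable.

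First I would spread out. Since $X$, $Y$, $f$, the decomposition $D=E_X+D_a+D_h$, the divisor $E_a$, and the $f$-ample sheaf $\LL$ are all of finite presentation over $K$, there is a finitely generated $\Z$-subalgebra $R\subset K$ and a model over $S_R=\Spec R$: smooth $S_R$-schemes $X_R,Y_R$, a proper $E_R$-semistable morphism $f_R:X_R\ra Y_R$ of pure relative dimension $d$ with an adapted divisor $D_R$, and an $f_R$-ample invertible sheaf $\LL_R$, whose base change along $R\ra K$ recovers the original data. After inverting finitely many elements of $R$, I would further arrange that $S_R\ra\Spec\Z$ is smooth, that all the structures above hold fibrewise over $S_R$, that the sheaves $\Omega^i_{X_R/Y_R}(D_{vR},D_{hR})$ are locally free, and, by generic flatness, that the formation of the two higher direct images appearing in the statement commutes with every base change $s\ra S_R$.

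Next I would specialize modulo $p$. As $\Spec R$ is Jacobson and dominates $\Spec\Z$, there exist closed points $s\in\Spec R$ with $\ch k(s)=p$ arbitrarily large, in particular with $p>d$. Because $S_R/\Z$ is smooth, the thickening $s\hookrightarrow\Spec W_2(k(s))=:\wt S$ lifts to a morphism $\wt S\ra S_R$, and pulling the whole model back along it yields a lifting $\wt f_s:(\wt X_s,\wt D_s)\ra(\wt Y_s,\wt E_{as})$ over $\Z/p^2\Z$ in the sense of Definition \ref{2.1}. Hence the fibre $f_s:X_s\ra Y_s$ with $\LL_s$ meets every hypothesis of Corollary \ref{4.9}: it is proper and $E_s$-semistable of pure relative dimension $d\le p$ over the regular base $\Spec k(s)$, $\LL_s$ is $f_s$-ample, and a lifting over $\Z/p^2\Z$ exists (vanishing being local on the base, one may take $Y_s$ affine to secure the auxiliary lifting of the relative Frobenius of $Y_s$). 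Corollary \ref{4.9} then gives
\[ R^jf_{s*}(\LL_s^{-1}\otimes\Omega^i_{X_s/Y_s}(D_{vs},D_{hs}))=0\quad(i+j<d) \]
together with the companion vanishing for $\LL_s(2D_{vs}-D_{hs})$ when $i+j>d$, for all such $s$.

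Finally I would descend to characteristic zero. By the base-change property arranged above, each group $R^jf_{s*}(\cdots)$ is the restriction to the fibre $Y_s$ of the coherent sheaf $\GG:=R^jf_{R*}(\cdots)$ on $Y_R$, so $\GG$ vanishes on $Y_s$ for every closed point $s$ of large residue characteristic. The image of $\Supp\GG$ in $\Spec R$ is constructible, and such points $s$ are dense in $\Spec R$; were $\Supp\GG$ to meet the generic fibre $Y_K$, its image would be a dense constructible subset of $\Spec R$ and would therefore contain one of these $s$, contradicting $\GG|_{Y_s}=0$. Thus $\GG|_{Y_K}=0$, which is the asserted vanishing over $K$; the second vanishing follows in the same way, or, using $\Omega^d_{X/Y}(\log D/E_a)\cong\omega_{X/Y}(D_h)$ from Remark \ref{1.6} and the relative Gorenstein property of the semistable $f$, by recognizing it as the relative Grothendieck--Serre dual of the first. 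The main obstacle is precisely this last descent: one must set up, during the spreading-out, generic flatness together with cohomological base change so that $R^jf_{s*}$ genuinely computes the fibre of $R^jf_{R*}$; granting this and the density of large-characteristic closed points carrying the automatic $\Z/p^2\Z$-lifting, the conclusion is the standard semicontinuity argument of \cite[\S 2]{il90}.
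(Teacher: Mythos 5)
Your proposal is correct and follows essentially the same route as the paper: the paper omits the proof with a pointer to \cite[\S 2]{il90}, where the characteristic-zero statements are deduced from their positive-characteristic counterparts by exactly this spreading-out and reduction modulo $p$ argument of \cite[2.7]{di} (automatic $W_2$-lifting from a model smooth over $\Z$, local choice of a Frobenius lifting on the affine base to invoke Corollary \ref{4.9}, and descent via generic base change and constructibility of the support). Your closing remark that the second vanishing is the relative Grothendieck--Serre dual of the first is also consistent with the twist $\LL(2D_v-D_h)$ appearing in the statement, though running the same specialization argument on the second line of Corollary \ref{4.9} suffices.
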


\section{Decomposition of de Rham complex with smooth horizontal
coefficients}\label{S5}

This section is parallel to \cite[\S 3]{il90}, and all proofs follow
Illusie's proofs very closely.

In this section, let $S$ be a scheme of characteristic $p>0$, and
$f:X\ra Y$ an $E$-semistable $S$-morphism with an adapted divisor
$D$ as in Definition \ref{1.2}. Then we have the following
exact sequence of locally free $\OO_X$-modules:
\begin{equation}
0\ra f^*\Omega^1_{Y/S}(\log E_a)\ra \Omega^1_{X/S}(\log D)\ra
\Omega^1_{X/Y}(\log D/E_a)\ra 0. \label{f5.1}
\end{equation}
By definition, $\Omega^i_{X/Y}(\log D/E_a)=\wedge^i\Omega^1_{X/Y}
(\log D/E_a)$ for any $i\geq 0$. Then we have
$\Omega^d_{X/Y}(\log D/E_a)=\omega_{X/Y}(D_h)$ and the de Rham
complex $(\Omega^\bullet_{X/Y}(\log D/E_a),d)$, where
\[
d:\Omega^i_{X/Y}(\log D/E_a)\ra\Omega^{i+1}_{X/Y}(\log D/E_a)
\]
is the ordinary differential map.

\begin{defn}\label{5.1}
Define $\bH=\oplus_i\bR^if_*\Omega^\bullet_{X/Y}(\log D/E_a)$ to be a
graded $\OO_Y$-module. The Koszul filtration of $\Omega^\bullet_{X/S}
(\log D)$ associated to (\ref{f5.1}) is defined as follows:
\[
K^i\Omega^\bullet_{X/S}(\log D)=\im(f^*\Omega^i_{Y/S}(\log E_a)\otimes
\Omega^{\bullet-i}_{X/S}(\log D)\ra\Omega^\bullet_{X/S}(\log D)).
\]
Then $K^i\Omega^\bullet_{X/S}(\log D)$ are subcomplexes of $\Omega^\bullet
_{X/S}(\log D)$ and induce a decreasing filtration of $\Omega^\bullet_{X/S}
(\log D)$:
\[
\cdots\supseteq K^i\Omega^\bullet_{X/S}(\log D)\supseteq
K^{i+1}\Omega^\bullet_{X/S}(\log D)\supseteq \cdots.
\]
It is easy to show that $K^i\Omega^j_{X/S}(\log D)$ is locally free
for any $i,j$ and the associated graded complex $\gr^i_K\Omega^\bullet
_{X/S}(\log D)=K^i/K^{i+1}=f^*\Omega^i_{Y/S}(\log E_a)\otimes\Omega^
{\bullet-i}_{X/Y}(\log D/E_a)$. The exact sequence
\[
0\ra K^1/K^2\ra K^0/K^2\ra K^0/K^1\ra 0
\]
is a short exact sequence of complexes:
\[
0\ra f^*\Omega^1_{Y/S}(\log E_a)\otimes\Omega^{\bullet-1}_{X/Y}(\log D/E_a)
\ra \Omega^\bullet_{X/S}(\log D)/K^2\ra
\Omega^{\bullet}_{X/Y}(\log D/E_a)\ra 0,
\]
which induces a morphism in $D(X)$:
\begin{equation}
\Omega^{\bullet}_{X/Y}(\log D/E_a)\ra f^*\Omega^1_{Y/S}(\log E_a)\otimes
\Omega^{\bullet}_{X/Y}(\log D/E_a). \label{f5.2}
\end{equation}

Applying $\oplus_i\bR^if_*$ to (\ref{f5.2}), we obtain the Gauss-Manin
connection
\begin{equation}
\nabla:\bH\ra\Omega^1_{Y/S}(\log E_a)\otimes\bH, \label{f5.3}
\end{equation}
and we can show that $\nabla$ is an integrable connection with logarithmic
poles along $E_a$. The complex
\begin{equation}
\Omega^\bullet_{Y/S}(\log E_a)(\bH)=(\bH\stackrel{\nabla}{\ra}
\Omega^1_{Y/S}(\log E_a)\otimes\bH\stackrel{\nabla}{\ra}\cdots
\stackrel{\nabla}{\ra}\Omega^i_{Y/S}(\log E_a)\otimes\bH
\stackrel{\nabla}{\ra}\cdots) \label{f5.4}
\end{equation}
is called the de Rham complex of $Y$ over $S$ with logarithmic poles along $E_a$
and coefficients in the Gauss-Manin system $\bH$. In fact, the Koszul filtration
of $\Omega^\bullet_{X/S}(\log D)$ and the derived functor $\bR f_*$ give rise to
a spectral sequence:
\begin{eqnarray}
E_1^{ij} & = & \bR^{i+j}f_*(\gr^i_K\Omega^\bullet_{X/S}(\log D)) \nonumber \\
& = & \Omega^i_{Y/S}(\log E_a)\otimes\bR^j f_*\Omega^\bullet_{X/Y}
(\log D/E_a)\Rightarrow\bR^{i+j} f_*\Omega^\bullet_{X/S}(\log D). \label{f5.5}
\end{eqnarray}
Then the de Rham complex $\Omega^\bullet_{Y/S}(\log E_a)(\bH)$ is just
the direct sum of the horizontal lines of $E_1^{ij}$ and the Gauss-Manin
connection $\nabla$ is just the direct sum of the differential operators
$d_1:E_1^{ij}\ra E_1^{i+1,j}$.
\end{defn}

\vskip 3mm
\noindent {\bf Variant.}
By definition, for any $i\geq 0$, $\Omega^i_{X/S}(D_v,D_h)=\Omega^i_{X/S}
(\log D)\otimes\OO_X(-D_v)$ and $\Omega^i_{X/Y}(D_v,D_h)=\Omega^i_{X/Y}
(\log D/E_a)\otimes\OO_X(-D_v)$.
Define $\bH^\dag=\oplus_i\bR^if_*\Omega^\bullet_{X/Y}(D_v,D_h)$ to be a
graded $\OO_Y$-module. The Koszul filtration of $\Omega^\bullet_{X/S}
(D_v,D_h)$ associated to (\ref{f5.1}) is defined as follows:
\[
K^i\Omega^\bullet_{X/S}(D_v,D_h)=\im(f^*\Omega^i_{Y/S}(\log E_a)\otimes
\Omega^{\bullet-i}_{X/S}(D_v,D_h)\ra\Omega^\bullet_{X/S}(D_v,D_h)).
\]
Then $K^i\Omega^\bullet_{X/S}(D_v,D_h)$ are subcomplexes of $\Omega^\bullet
_{X/S}(D_v,D_h)$ and induce a decreasing filtration of $\Omega^\bullet_{X/S}
(D_v,D_h)$:
\[
\cdots\supseteq K^i\Omega^\bullet_{X/S}(D_v,D_h)\supseteq
K^{i+1}\Omega^\bullet_{X/S}(D_v,D_h)\supseteq \cdots.
\]
It is easy to show that $K^i\Omega^j_{X/S}(D_v,D_h)$ is locally free
for any $i,j$ and the associated graded complex $\gr^i_K\Omega^\bullet
_{X/S}(D_v,D_h)=K^i/K^{i+1}=f^*\Omega^i_{Y/S}(\log E_a)\otimes\Omega^
{\bullet-i}_{X/Y}(D_v,D_h)$. The exact sequence
\[
0\ra K^1/K^2\ra K^0/K^2\ra K^0/K^1\ra 0
\]
is a short exact sequence of complexes:
\[
0\ra f^*\Omega^1_{Y/S}(\log E_a)\otimes\Omega^{\bullet-1}_{X/Y}(D_v,D_h)
\ra \Omega^\bullet_{X/S}(D_v,D_h)/K^2\ra
\Omega^{\bullet}_{X/Y}(D_v,D_h)\ra 0,
\]
which induces a morphism in $D(X)$:
\begin{equation}
\Omega^{\bullet}_{X/Y}(D_v,D_h)\ra f^*\Omega^1_{Y/S}(\log E_a)\otimes
\Omega^{\bullet}_{X/Y}(D_v,D_h). \label{f5.6}
\end{equation}

Applying $\oplus_i\bR^if_*$ to (\ref{f5.6}), we obtain the Gauss-Manin
connection
\begin{equation}
\nabla:\bH^\dag\ra\Omega^1_{Y/S}(\log E_a)\otimes\bH^\dag, \label{f5.7}
\end{equation}
and we can show that $\nabla$ is an integrable connection with logarithmic
poles along $E_a$. The complex
\begin{equation}
\Omega^\bullet_{Y/S}(\log E_a)(\bH^\dag)=(\bH^\dag\stackrel{\nabla}{\ra}
\Omega^1_{Y/S}(\log E_a)\otimes\bH^\dag\stackrel{\nabla}{\ra}\cdots
\stackrel{\nabla}{\ra}\Omega^i_{Y/S}(\log E_a)\otimes\bH^\dag
\stackrel{\nabla}{\ra}\cdots) \label{f5.8}
\end{equation}
is called the de Rham complex of $Y$ over $S$ with logarithmic poles along $E_a$
and coefficients in the Gauss-Manin system $\bH^\dag$. In fact, the Koszul filtration
of $\Omega^\bullet_{X/S}(D_v,D_h)$ and the derived functor $\bR f_*$
give rise to a spectral sequence:
\begin{eqnarray}
E_1^{ij} & = & \bR^{i+j}f_*(\gr^i_K\Omega^\bullet_{X/S}(D_v,D_h)) \nonumber \\
& = & \Omega^i_{Y/S}(\log E_a)\otimes\bR^j f_*\Omega^\bullet_{X/Y}
(D_v,D_h)\Rightarrow\bR^{i+j} f_*\Omega^\bullet_{X/S}(D_v,D_h). \label{f5.9}
\end{eqnarray}
Then the de Rham complex $\Omega^\bullet_{Y/S}(\log E_a)(\bH^\dag)$ is just
the direct sum of the horizontal lines of $E_1^{ij}$ and the Gauss-Manin
connection $\nabla$ is just the direct sum of the differential operators
$d_1:E_1^{ij}\ra E_1^{i+1,j}$.

\begin{defn}\label{5.2}
The Hodge filtrations of $\bH$ and $\bH^\dag$ are decreasing
filtrations defined respectively by:
\begin{eqnarray*}
\Fil^i\bH &=& \im(\oplus_j\bR^jf_*\Omega^{\geq i}_{X/Y}(\log D/E_a)
\ra\oplus_j\bR^jf_*\Omega^\bullet_{X/Y}(\log D/E_a)), \\
\Fil^i\bH^\dag &=& \im(\oplus_j\bR^jf_*\Omega^{\geq i}_{X/Y}(D_v,D_h)
\ra\oplus_j\bR^jf_*\Omega^\bullet_{X/Y}(D_v,D_h)),
\end{eqnarray*}
which induce the Hodge spectral sequences:
\begin{eqnarray}
E_1^{ij} &=& R^jf_*\Omega^i_{X/Y}(\log D/E_a)\Rightarrow
\bR^{i+j}f_*\Omega^\bullet_{X/Y}(\log D/E_a), \label{f5.14} \\
E_1^{ij} &=& R^jf_*\Omega^i_{X/Y}(D_v,D_h)\Rightarrow
\bR^{i+j}f_*\Omega^\bullet_{X/Y}(D_v,D_h). \label{f5.15}
\end{eqnarray}

Note that the Gauss-Manin connection satisfies Griffiths' transverality:
\[
\nabla(\Fil^i\bH^!)\subset\Omega^1_{Y/S}(\log E_a)\otimes\Fil^{i-1}\bH^!,
\,\,\hbox{where}\,\,!=\,\,\hbox{null or}\,\,\dag,
\]
hence the Hodge filtration of $\bH^!$ induces a decreasing filtration
of the de Rham complex $\Omega^\bullet_{Y/S}(\log E_a)(\bH^!)$ by
subcomplexes:
\begin{eqnarray}
\Fil^i\Omega^\bullet_{Y/S}(\log E_a)(\bH^!)=
& (\Fil^i\bH^! & \stackrel{\nabla}{\ra} \Omega^1_{Y/S}(\log E_a)
\otimes\Fil^{i-1}\bH^!\stackrel{\nabla}{\ra} \nonumber \\
& \cdots & \stackrel{\nabla}{\ra}
\Omega^j_{Y/S}(\log E_a)\otimes\Fil^{i-j}\bH^!
\stackrel{\nabla}{\ra}\cdots). \label{f5.17}
\end{eqnarray}
Assume that the Hodge spectral sequences (\ref{f5.14}) and (\ref{f5.15})
degenerate in $E_1$. Then we have
\begin{eqnarray}
\oplus_j R^{j-i}f_*\Omega^i_{X/Y}(\log D/E_a) & \stackrel{\sim}{\ra} &
\gr^i\bH, \label{f5.18} \\
\oplus_j R^{j-i}f_*\Omega^i_{X/Y}(D_v,D_h) & \stackrel{\sim}{\ra} &
\gr^i\bH^\dag. \label{f5.19}
\end{eqnarray}
A similar argument to \cite{katz} shows that the Gauss-Manin connection
$\nabla:\gr^i\bH^!\ra\Omega^1_{Y/S}(\log E_a)\otimes\gr^{i-1}\bH^!$ can
be identified with the cup product by the Kodaira-Spencer class
$c\in\Ext^1_{\OO_X}(\Omega^1_{X/Y}(\log D/E_a),f^*\Omega^1_{Y/S}(\log E_a))$
defined by (\ref{f5.1}). For this reason, the graded complex of
$\Omega^\bullet_{Y/S}(\log E_a)(\bH^!)$ associated to the Hodge
filtration (\ref{f5.17}) is called the Kodaira-Spencer complex:
\begin{eqnarray}
\gr^i\Omega^\bullet_{Y/S}(\log E_a)(\bH^!)=
& (\gr^i\bH^! & \stackrel{\nabla}{\ra} \Omega^1_{Y/S}(\log E_a)
\otimes\gr^{i-1}\bH^!\stackrel{\nabla}{\ra} \nonumber \\
& \cdots & \stackrel{\nabla}{\ra}
\Omega^j_{Y/S}(\log E_a)\otimes\gr^{i-j}\bH^!
\stackrel{\nabla}{\ra}\cdots), \label{f5.21}
\end{eqnarray}
where $!=$ null or $\dag$.
\end{defn}

\begin{defn}\label{5.3}
The conjugate filtrations of $\bH$ and $\bH^\dag$ are increasing
filtrations defined respectively by:
\begin{eqnarray*}
\Fil_i\bH &=& \im(\oplus_j\bR^jf_*(\tau_{\leq i}\Omega^\bullet_{X/Y}
(\log D/E_a))\ra\oplus_j\bR^jf_*\Omega^\bullet_{X/Y}(\log D/E_a)), \\
\Fil_i\bH^\dag &=& \im(\oplus_j\bR^jf_*(\tau_{\leq i}\Omega^\bullet_{X/Y}
(D_v,D_h))\ra\oplus_j\bR^jf_*\Omega^\bullet_{X/Y}(D_v,D_h)),
\end{eqnarray*}
which induce the conjugate spectral sequences:
\begin{eqnarray}
_cE_2^{ij} &=& R^if_*\HH^j(\Omega^\bullet_{X/Y}(\log D/E_a))\Rightarrow
\bR^{i+j}f_*\Omega^\bullet_{X/Y}(\log D/E_a), \label{f5.22} \\
_cE_2^{ij} &=& R^if_*\HH^j(\Omega^\bullet_{X/Y}(D_v,D_h))\Rightarrow
\bR^{i+j}f_*\Omega^\bullet_{X/Y}(D_v,D_h). \label{f5.23}
\end{eqnarray}

The conjugate filtration is stable under the Gauss-Manin connection, i.e.\
\[
\nabla(\Fil_i\bH^!)\subseteq\Omega^1_{Y/S}(\log E_a)\otimes\Fil_i\bH^!,
\]
hence the conjugate filtration of $\bH^!$ induces an increasing filtration
of the de Rham complex $\Omega^\bullet_{Y/S}(\log E_a)(\bH^!)$ by
subcomplexes:
\begin{eqnarray}
\Fil_i\Omega^\bullet_{Y/S}(\log E_a)(\bH^!)=
& (\Fil_i\bH^! & \stackrel{\nabla}{\ra} \Omega^1_{Y/S}(\log E_a)
\otimes\Fil_i\bH^!\stackrel{\nabla}{\ra} \nonumber \\
& \cdots & \stackrel{\nabla}{\ra}
\Omega^j_{Y/S}(\log E_a)\otimes\Fil_i\bH^!
\stackrel{\nabla}{\ra}\cdots). \label{f5.25}
\end{eqnarray}
From the increasing filtration $\Fil_i$ of $\Omega^\bullet_{Y/S}
(\log E_a)(\bH^!)$, we obtain a decreasing filtration $\Fil_{-i}$
of $\Omega^\bullet_{Y/S}(\log E_a)(\bH^!)$, which gives rise to
a spectral sequence:
\begin{equation}
E_1^{ij}=\HH^{i+j}(\gr_{-i}\Omega^\bullet_{Y/S}(\log E_a)(\bH^!))
\Rightarrow\HH^{i+j}(\Omega^\bullet_{Y/S}(\log E_a)(\bH^!)), \label{f5.26}
\end{equation}
where $!=$ null or $\dag$.
\end{defn}

From now on, let $\wt{S}$ be a lifting of $S$ over $\Z/p^2\Z$, and
$F_{\wt{S}}:\wt{S}\ra\wt{S}$ a lifting of the absolute Frobenius
morphism $F_S:S\ra S$ over $\wt{S}$. We need the following assumptions:
\begin{asp}\label{5.4}
\begin{itemize}
\item[(i)] $f:X\ra Y$ is proper and of relative dimension $\leq p$;
\item[(ii)] $f:(X,D)\ra(Y,E_a)$ has a lifting $\wt{f}:(\wt{X},\wt{D})
\ra(\wt{Y},\wt{E}_a)$ over $\wt{S}$; and
\item[(iii)] $F_{Y/S}:Y\ra Y_1$ has a lifting $\wt{F}_{Y/S}:
\wt{Y}\ra\wt{Y}_1$ over $\wt{S}$, which is compatible with $\wt{E}_a$.
\end{itemize}
\end{asp}

Under Assumption \ref{5.4}, by Corollaries \ref{2.4} and \ref{4.7},
we have that for any $i,j$, $R^jf_*\Omega^i_{X/Y}(\log D/E_a)$ and
$R^jf_*\Omega^i_{X/Y}(D_v,D_h)$ are locally free of finite type,
and of formation compatible with any base change, and that the Hodge
spectral sequences (\ref{f5.14}) and (\ref{f5.15}) degenerate in $E_1$.
Furthermore, we have

\begin{lem}\label{5.5}
Under Assumption \ref{5.4}, the conjugate spectral sequences
(\ref{f5.22}) and (\ref{f5.23}) degenerate in $E_2$.
\end{lem}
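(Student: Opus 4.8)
The plan is to deduce both degenerations from the decomposition theorem (Remark \ref{2.3}(ii) for $\Omega^\bullet_{X/Y}(\log D/E_a)$ and Remark \ref{4.6} for the variant $\Omega^\bullet_{X/Y}(D_v,D_h)$), after first localizing on $Y$. Since the terms and the differentials $d_r$ ($r\geq 2$) of the conjugate spectral sequences (\ref{f5.22}) and (\ref{f5.23}) are $\OO_Y$-modules and $\OO_Y$-linear maps, and since the formation of $\bR f_*$ commutes with restriction to an open subset of $Y$, the vanishing of all $d_r$ may be checked on an affine open cover of $Y$. Hence I may and do assume $Y$ affine. Under this hypothesis the obstruction group of Remark \ref{2.3}(ii) vanishes: as $f$ is proper of relative dimension $\leq p$, so is $f'\colon X'\ra Y$, whence $R^qf'_*\GG=0$ for $q>p$ and any coherent $\GG$ by Grothendieck vanishing on the fibres of dimension $\leq p$; combined with the vanishing of higher cohomology of quasi-coherent sheaves on the affine $Y$, the Leray spectral sequence gives $H^{p+1}(X',(\Omega^p_{X'/Y}(\log D'/E_a))^\vee)=0$, and likewise with $(D'_v,D'_h)$ in place of $(\log D'/E_a)$. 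Thus I obtain decompositions in $D(X',\OO_{X'})$
\[
\bigoplus_i\Omega^i_{X'/Y}(\log D'/E_a)[-i]\stackrel{\sim}{\ra}
F_*\Omega^\bullet_{X/Y}(\log D/E_a),\qquad
\bigoplus_i\Omega^i_{X'/Y}(D'_v,D'_h)[-i]\stackrel{\sim}{\ra}
F_*\Omega^\bullet_{X/Y}(D_v,D_h),
\]
where $F=F_{X/Y}$.

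Next I would identify the conjugate spectral sequence with the second hypercohomology spectral sequence of $\bR f'_*$ applied to $F_*\Omega^\bullet_{X/Y}(\log D/E_a)$. As $F$ is finite, $F_*$ is exact and $\bR f_*=\bR f'_*\circ F_*$; moreover $F_*$ carries the canonical filtration $\tau_{\leq\bullet}$ of $\Omega^\bullet_{X/Y}(\log D/E_a)$ to that of $F_*\Omega^\bullet_{X/Y}(\log D/E_a)$ and commutes with $\HH^j$. Consequently the conjugate filtration on $\bR f_*\Omega^\bullet_{X/Y}(\log D/E_a)$ coincides with the filtration induced by $\tau_{\leq\bullet}$ on $\bR f'_*(F_*\Omega^\bullet_{X/Y}(\log D/E_a))$, and by the Cartier isomorphism (\ref{f1.6}) its $E_2$-terms are
\[
{}_cE_2^{ij}=R^if_*\HH^j(\Omega^\bullet_{X/Y}(\log D/E_a))
=R^if'_*\Omega^j_{X'/Y}(\log D'/E_a).
\]
The same discussion applies verbatim to $\Omega^\bullet_{X/Y}(D_v,D_h)$ using the Cartier isomorphism (\ref{f4.2}).

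Finally I would conclude by exploiting that the decomposition is filtered for $\tau$: by functoriality of $\tau_{\leq i}$ the first isomorphism above restricts to $\tau_{\leq i}F_*\Omega^\bullet_{X/Y}(\log D/E_a)\cong\bigoplus_{j\leq i}\Omega^j_{X'/Y}(\log D'/E_a)[-j]$. Applying $\bR f'_*$ therefore splits the conjugate filtration on $\bR f_*\Omega^\bullet_{X/Y}(\log D/E_a)$ as a direct sum, yielding $\bR^nf_*\Omega^\bullet_{X/Y}(\log D/E_a)\cong\bigoplus_{i+j=n}R^if'_*\Omega^j_{X'/Y}(\log D'/E_a)$ compatibly with that filtration, which is exactly the assertion ${}_cE_\infty^{ij}={}_cE_2^{ij}$; the variant (\ref{f5.23}) is identical via Remark \ref{4.6}. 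I expect the first step to be the main obstacle: one must secure the \emph{global} decomposition, which forces both the reduction to affine $Y$ and the accompanying $H^{p+1}$-vanishing, and one must then check that the derived-category splitting is genuinely filtered for $\tau$, so that it descends to an $E_2$-degeneration of the conjugate spectral sequence and not merely to a numerical equality of ranks (the latter being independently confirmed by the $E_1$-degeneration of the Hodge spectral sequences together with base change along $F_Y$).
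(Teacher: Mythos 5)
Your argument is correct, but it is not the route the paper takes. The paper's proof of Lemma \ref{5.5} is the classical counting argument: the Cartier isomorphisms (Propositions \ref{1.7} and \ref{4.1}) identify ${}_cE_2^{ij}$ with $R^if'_*\Omega^j_{X'/Y}(\log D'/E_a)\cong F_{Y/S}^*R^if_{1*}\Omega^j_{X_1/Y_1}(\log D_1/E_{a1})$, which by Corollaries \ref{2.4}/\ref{4.7} is locally free of the same rank as $R^if_*\Omega^j_{X/Y}(\log D/E_a)$; the $E_1$-degeneration of the Hodge spectral sequences (\ref{f5.14}), (\ref{f5.15}) then forces $\sum_{i+j=n}\rank{}_cE_2^{ij}=\rank\bR^nf_*\Omega^\bullet_{X/Y}(\log D/E_a)=\sum_{i+j=n}\rank{}_cE_\infty^{ij}$, and since each ${}_cE_\infty^{ij}$ is a subquotient of the locally free ${}_cE_2^{ij}$, all higher differentials vanish. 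You instead localize to affine $Y$, invoke the full decompositions of Remarks \ref{2.3}(ii) and \ref{4.6}, and use the functoriality of the canonical filtration $\tau_{\leq i}$ in $D(X')$ to split the conjugate filtration outright. Both arguments are valid and rest on the same lifting hypotheses (through Theorems \ref{2.2}/\ref{4.2}), but they consume different corollaries of them: the paper needs only the Cartier isomorphism together with local freeness and Hodge degeneration, and in exchange must run a rank count; your proof needs the stronger derived-category decomposition (whose proof, the analogue of \cite[3.7(b)]{di}, is itself nontrivial and only sketched in the paper), but in exchange the degeneration falls out structurally, with no counting and no appeal to the Hodge spectral sequence. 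Your closing caveat about checking that the splitting is filtered for $\tau$ is exactly the right point, and your $\tau_{\leq i}$-functoriality argument handles it; the only cosmetic remark is that your final parenthetical ``independently confirmed by \dots'' is in fact the paper's entire proof.
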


\begin{proof}
It is a direct consequence of the Cartier isomorphism.
For the degeneracy of (\ref{f5.22}), we use Proposition \ref{1.7}
and the degeneracy of (\ref{f5.14}).
For the degeneracy of (\ref{f5.23}), we use Proposition \ref{4.1}
and the degeneracy of (\ref{f5.15}).
\end{proof}

For convenience of the reader, we recall the following commutative
diagram with cartesian squares:
\begin{equation}
\xymatrix{
X \ar[dr]_f \ar[r]^{F=F_{X/Y}} & X' \ar[d]^{f'} \ar[r] &
X_1 \ar[d]^{f_1} \ar[r] & X \ar[d]^{f} \\
 & Y \ar[r]^{F_{Y/S}} \ar[dr] & Y_1 \ar[r] \ar[d] & Y \ar[d] \\
 & & S \ar[r]^{F_S} & S.
} \label{f5.27}
\end{equation}

In the remaining part of this section, we always assume $!=$ null or $\dag$
for $\bH^!$ unless otherwise stated. The degeneracy of the conjugate spectral
sequences (\ref{f5.22}) and (\ref{f5.23}) in $E_2$ gives rise to the following
isomorphisms:
\begin{eqnarray}
&\gr_i\bH& \stackrel{\sim}{\ra}\oplus_jR^{j-i}f_*\HH^i(\Omega^\bullet_{X/Y}
(\log D/E_a))\stackrel{\sim}{\ra}\oplus_jR^{j-i}f'_*\HH^i(F_*\Omega^\bullet
_{X/Y}(\log D/E_a)), \nonumber \\
&\gr_i\bH^\dag& \stackrel{\sim}{\ra}\oplus_jR^{j-i}f_*\HH^i(\Omega^\bullet
_{X/Y}(D_v,D_h))\stackrel{\sim}{\ra}\oplus_jR^{j-i}f'_*\HH^i(F_*\Omega^\bullet
_{X/Y}(D_v,D_h)). \nonumber
\end{eqnarray}
By the Cartier isomorphisms and the base changes in (\ref{f5.27}),
we have
\begin{eqnarray}
\gr_i\bH &\stackrel{\sim}{\ra}& \oplus_jR^{j-i}f'_*\Omega^i_{X'/Y}
(\log D'/E_a)\stackrel{\sim}{\ra} \oplus_jF^*_{Y/S}R^{j-i}f_{1*}
\Omega^i_{X_1/Y_1}(\log D_1/E_{a1}) \nonumber \\
&=& F^*_{Y/S}\gr^i\bH_1=\gr^i\bH_1\otimes\OO_Y, \label{f5.28} \\
\gr_i\bH^\dag &\stackrel{\sim}{\ra}& \oplus_jR^{j-i}f'_*\Omega^i_{X'/Y}
(D'_v,D'_h)\stackrel{\sim}{\ra} \oplus_jF^*_{Y/S}R^{j-i}f_{1*}
\Omega^i_{X_1/Y_1}(D_{v1},D_{h1}) \nonumber \\
&=& F^*_{Y/S}\gr^i\bH^\dag_1=\gr^i\bH^\dag_1\otimes\OO_Y.
\label{f5.29}
\end{eqnarray}
Furthermore, \cite[2.3.1.3]{katz} shows that the Gauss-Manin connection
satisfies $\nabla_{\gr_i}=1\otimes d$ under the above isomorphisms,
hence we obtain the following isomorphism of complexes, where the left
one is the graded complex associated to (\ref{f5.25}),
and the differential of the right one is $1\otimes d$:
\begin{equation}
F_{Y/S*}\gr_i\Omega^\bullet_{Y/S}(\log E_a)(\bH^!)\stackrel{\sim}{\ra}
\gr^i\bH^!_1\otimes F_{Y/S*}\Omega^\bullet_{Y/S}(\log E_a). \label{f5.30}
\end{equation}
Since $\gr^i\bH^!_1$ is locally free, we have the isomorphism for $E_1$
terms in (\ref{f5.26}):
\begin{eqnarray}
E_1^{-i+j,i}(F_{Y/S*}\Omega^\bullet_{Y/S}(\log E_a)(\bH^!),\Fil_\bullet)
=\HH^j(F_{Y/S*}\gr_{i-j}\Omega^\bullet_{Y/S}(\log E_a)(\bH^!)) \nonumber \\
\stackrel{\sim}{\ra}\gr^{i-j}\bH^!_1\otimes\HH^j(F_{Y/S*}\Omega^
\bullet_{Y/S}(\log E_a))\stackrel{\stackrel{C}{\sim}}{\ra}
\gr^{i-j}\bH^!_1\otimes\Omega^j_{Y_1/S}(\log E_{a1}), \nonumber
\end{eqnarray}
whose inverse is called the Cartier isomorphism for
$\Omega^\bullet_{Y/S}(\log E_a)(\bH^!)$:
\begin{equation}
C^{-1}:\gr^{i-j}\bH^!_1\otimes\Omega^j_{Y_1/S}(\log E_{a1})
\stackrel{\sim}{\ra}
E_1^{-i+j,i}(F_{Y/S*}\Omega^\bullet_{Y/S}(\log E_a)(\bH^!),\Fil_\bullet).
\label{f5.31}
\end{equation}
Note that the left hand side of (\ref{f5.31}) is the $j$-term in
the Kodaira-Spencer complex (\ref{f5.21}) of $\bH^!_1$ on $Y_1$.
It follows from \cite[3.2]{katz} that the right hand side of
(\ref{f5.31}) with the differential $d_1$ up to sign corresponds
to the Kodaira-Spencer complex.

By definition \cite[1.3.3]{de}, the delayed filtration $G_\bullet
=Dec(\Fil_\bullet)$ associated to the conjugate filtration $\Fil_
\bullet$ is an increasing filtration of the complex $\Omega^\bullet
_{Y/S}(\log E_a)(\bH^!)$, which is defined by
\[
G_i\Omega^j_{Y/S}(\log E_a)(\bH^!)=\{x\in\Omega^j_{Y/S}(\log E_a)
\otimes\Fil_{i-j}\bH^!\,|\,\nabla(x)\in\Omega^{j+1}_{Y/S}(\log E_a)
\otimes\Fil_{i-j-1}\bH^! \}.
\]
Similarly, we have also an increasing filtration of $F_{Y/S*}\Omega^
\bullet_{Y/S}(\log E_a)(\bH^!)$ by subcomplexes of $\OO_{Y_1}$-modules.
There is a natural surjective homomorphism
\[
\gr^G_i\Omega^j_{Y/S}(\log E_a)(\bH^!)\ra E_1^{-i+j,i}(\Omega^\bullet_{Y/S}
(\log E_a)(\bH^!),\Fil_\bullet),
\]
which is indeed an isomorphism and induces isomorphisms for all $r\geq 1$:
\[
E_r(\Omega^\bullet_{Y/S}(\log E_a)(\bH^!),G_\bullet)\stackrel{\sim}{\ra}
E_{r+1}(\Omega^\bullet_{Y/S}(\log E_a)(\bH^!),\Fil_\bullet).
\]

\vskip 3mm
\noindent{\bf Objective.} Under Assumption \ref{5.4}, we shall construct
a decomposition in $D(Y_1)$:
\[
G_{p-1}F_{Y/S*}\Omega^\bullet_{Y/S}(\log E_a)(\bH^!)\stackrel{\sim}{\ra}
\bigoplus_{i<p}\gr^i\Omega^\bullet_{Y_1/S}(\log E_{a1})(\bH^!_1).
\]

Fix $i<p$. For any $j\geq 0$, the decompositions in Theorems \ref{2.2}
and \ref{4.2} give rise to the morphisms in $D(X')$:
\begin{eqnarray}
&\phi^{i-j}_{(\wt{f},\wt{F}_{Y/S})}:&\Omega^{i-j}_{X'/Y}(\log D'/E_a)[-i+j]
\ra\tau_{\leq i-j}F_*\Omega^\bullet_{X/Y}(\log D/E_a), \label{f5.32} \\
&\phi^{i-j}_{(\wt{f},\wt{F}_{Y/S})}:&\Omega^{i-j}_{X'/Y}(D'_v,D'_h)[-i+j]
\ra\tau_{\leq i-j}F_*\Omega^\bullet_{X/Y}(D_v,D_h). \label{f5.33}
\end{eqnarray}
Applying $\oplus_k\bR^kf'_*$ to (\ref{f5.32}) and (\ref{f5.33}),
we obtain the homomorphisms of $\OO_Y$-modules:
\begin{eqnarray}
u^{i-j}:\gr^{i-j}\bH' &=& \oplus_kR^{k-i+j}f'_*\Omega^{i-j}_{X'/Y}
(\log D'/E_a) \nonumber \\
&\ra& \Fil_{i-j}(\oplus_k\bR^kf_*\Omega^\bullet_{X/Y}(\log D/E_a))
=\Fil_{i-j}\bH, \label{f5.34} \\
u^{i-j}:\gr^{i-j}\bH^{\dag\prime} &=& \oplus_kR^{k-i+j}f'_*\Omega
^{i-j}_{X'/Y}(D'_v,D'_h) \nonumber \\
&\ra& \Fil_{i-j}(\oplus_k\bR^kf_*\Omega^\bullet_{X/Y}(D_v,D_h))
=\Fil_{i-j}\bH^\dag. \label{f5.35}
\end{eqnarray}
On the other hand, $\wt{F}_{Y/S}$ gives rise to the homomorphism:
\[
v^1=\wt{F}^*_{Y/S}/p: \Omega^1_{Y_1/S}(\log E_{a1})\ra\ZZ^1(F_{Y/S*}
\Omega^\bullet_{Y/S}(\log E_a)),
\]
which, by exterior product, induces the homomorphism:
\begin{equation}
v^j:\Omega^j_{Y_1/S}(\log E_{a1})\ra\ZZ^j(F_{Y/S*}\Omega^\bullet_{Y/S}
(\log E_a))\subset F_{Y/S*}\Omega^j_{Y/S}(\log E_a). \label{f5.36}
\end{equation}
By adjunction of $(F^*_{Y/S},F_{Y/S*})$ and abuse of notation,
(\ref{f5.34}) and (\ref{f5.35}) yield the homomorphism:
\begin{equation}
u^{i-j}:\gr^{i-j}\bH^!_1\ra F_{Y/S*}\Fil_{i-j}\bH^!. \label{f5.37}
\end{equation}
Combining (\ref{f5.36}) and (\ref{f5.37}), we obtain the
homomorphism of $\OO_{Y_1}$-modules:
\begin{equation}
v^j\otimes u^{i-j}:\Omega^j_{Y_1/S}(\log E_{a1})\otimes\gr^{i-j}
\bH^!_1\ra F_{Y/S*}(\Omega^j_{Y/S}(\log E_a)\otimes\Fil_{i-j}\bH^!).
\label{f5.38}
\end{equation}

\begin{prop}\label{5.6}
Under Assumption \ref{5.4}, we have
\begin{itemize}
\item[(i)] The image of $v^j\otimes u^{i-j}$ is contained in
$G_iF_{Y/S*}\Omega^j_{Y/S}(\log E_a)(\bH^!)$, where $G_\bullet$
is the delayed filtration.
\item[(ii)] For any $i<p$ and any $j\geq 0$, the following square is
commutative:
\begin{equation}
\xymatrix{
\Omega^j_{Y_1/S}(\log E_{a1})\otimes\gr^{i-j}\bH^!_1
\ar[d]_{v^j\otimes u^{i-j}} \ar[r]^\nabla &
\Omega^{j+1}_{Y_1/S}(\log E_{a1})\otimes\gr^{i-j-1}\bH^!_1
\ar[d]^{v^{j+1}\otimes u^{i-j-1}} \\
F_{Y/S*}(\Omega^j_{Y/S}(\log E_a)(\bH^!)) \ar[r]^{F_{Y/S*}\nabla} &
F_{Y/S*}(\Omega^{j+1}_{Y/S}(\log E_a)(\bH^!)),
} \label{f5.39}
\end{equation}
where the upper horizontal morphism is the differential map of the
Kodaira-Spencer complex. (i) and (ii) give rise to a morphism of complexes:
\begin{equation}
(v\otimes u)^i:\gr^i\Omega^\bullet_{Y_1/S}(\log E_{a1})(\bH^!_1)
\ra G_iF_{Y/S*}\Omega^\bullet_{Y/S}(\log E_a)(\bH^!). \label{f5.40}
\end{equation}
\item[(iii)] The composition of the morphisms of complexes
\begin{eqnarray}
\gr^i\Omega^\bullet_{Y_1/S}(\log E_{a1})(\bH^!_1)\stackrel{(\ref{f5.40})}
{\ra}G_iF_{Y/S*}\Omega^\bullet_{Y/S}(\log E_a)(\bH^!)\ra \nonumber \\
\gr^G_iF_{Y/S*}\Omega^\bullet_{Y/S}(\log E_a)(\bH^!)\ra E_1^
{-i+\bullet,i}(F_{Y/S*}\Omega^\bullet_{Y/S}(\log E_a)(\bH^!),\Fil_\bullet)
\nonumber
\end{eqnarray}
induces the Cartier isomorphism (\ref{f5.31}) for $\Omega^\bullet_{Y/S}
(\log E_a)(\bH^!)$, hence is a quasi-isomorphism. Consequently,
the following morphism is a quasi-isomorphism:
\begin{equation}
\sum_{i<p}(v\otimes u)^i:\bigoplus_{i<p}\gr^i\Omega^\bullet_{Y_1/S}
(\log E_{a1})(\bH^!_1)\ra G_{p-1}F_{Y/S*}\Omega^\bullet_{Y/S}(\log E_a)
(\bH^!). \label{f5.41}
\end{equation}
\end{itemize}
\end{prop}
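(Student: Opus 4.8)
The plan is to follow the argument of \cite[\S 3]{il90}, handling the two cases $!=$ null and $!=\dag$ simultaneously, since the only geometric inputs are the decomposition theorems (Theorems \ref{2.2} and \ref{4.2}), established in both cases, together with the formal behaviour of the Hodge, conjugate and delayed filtrations. The computational engine throughout is the identity recorded in \eqref{f5.30}: under the Cartier identification $\gr_i\bH^!\stackrel{\sim}{\ra}\gr^i\bH^!_1\otimes\OO_Y$ the Gauss--Manin connection on the conjugate-associated graded is $\nabla_{\gr_i}=1\otimes d$ (following \cite[2.3.1.3]{katz}); in particular the class of $u^{i-j}(h)$ in $\gr_{i-j}\bH^!$ is horizontal, since by construction via adjunction of $(F^*_{Y/S},F_{Y/S*})$ it corresponds to $h\otimes 1$.

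For (i), I would check the condition on the image $x=v^j(\omega)\otimes u^{i-j}(h)$ of a local section $\omega\otimes h$ under \eqref{f5.38}, computing $\nabla(x)$ by the Leibniz rule. Because $v^j(\omega)$ is a cocycle in $\ZZ^j(F_{Y/S*}\Omega^\bullet_{Y/S}(\log E_a))$ by \eqref{f5.36}, it is closed, so the first term drops out and $\nabla(x)=(-1)^j v^j(\omega)\wedge\nabla(u^{i-j}(h))$. Stability of the conjugate filtration under $\nabla$ (Definition \ref{5.3}) gives a priori $\nabla(u^{i-j}(h))\in\Omega^1_{Y/S}(\log E_a)\otimes\Fil_{i-j}\bH^!$; but its image in $\Omega^1\otimes\gr_{i-j}\bH^!$ is $\nabla_{\gr_{i-j}}$ applied to the horizontal class $h\otimes 1$, hence vanishes by the $1\otimes d$ formula. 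Therefore $\nabla(u^{i-j}(h))$ already lies in $\Omega^1\otimes\Fil_{i-j-1}\bH^!$, whence $\nabla(x)\in\Omega^{j+1}_{Y/S}(\log E_a)\otimes\Fil_{i-j-1}\bH^!$, which is exactly the condition defining $G_iF_{Y/S*}\Omega^j_{Y/S}(\log E_a)(\bH^!)$.

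For (ii), the commutativity of \eqref{f5.39} is the assertion that $v\otimes u$ intertwines the Kodaira--Spencer differential on the top with $F_{Y/S*}\nabla$ on the bottom. The computation in (i) already isolates the operative term $(-1)^j v^j(\omega)\wedge\nabla(u^{i-j}(h))$, and the point is that $\nabla(u^{i-j}(h))$, read off in $\Omega^1\otimes\gr_{i-j-1}\bH^!$, is precisely the cup product of $h$ with the Kodaira--Spencer class $c$ defined by \eqref{f5.1}, by the identification of $\nabla$ on the Hodge graded with this cup product (discussed following \eqref{f5.19}) and by \cite[3.2]{katz}. Multiplicativity of $v^\bullet=\wt{F}^*_{Y/S}/p$ through the exterior powers then matches the two composites up to the sign already present in the Kodaira--Spencer complex, producing the morphism of complexes \eqref{f5.40}. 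I expect this step to be the principal obstacle: it demands careful bookkeeping of the interaction between the two independent Frobenius liftings---$\wt{F}_{Y/S}$, which yields $v$, and the relative lifting underlying Theorems \ref{2.2} and \ref{4.2}, which yields $u$---and of the signs and filtration levels, and it is here that Katz's computation \cite[3.2]{katz} and Deligne's delayed-filtration formalism \cite[1.3.3]{de} carry the real weight.

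For (iii), I would compose \eqref{f5.40} with the canonical surjection onto $\gr^G_i$ and the isomorphism $\gr^G_i\stackrel{\sim}{\ra}E_1^{-i+\bullet,i}(\Fil_\bullet)$, and verify that on each term the composite is the Cartier isomorphism \eqref{f5.31}. This factors as a tensor product of two isomorphisms already in hand: $v^j$ induces the classical Cartier isomorphism $C$ on $\HH^j(F_{Y/S*}\Omega^\bullet_{Y/S}(\log E_a))$, while $u^{i-j}$ induces the canonical identification on $\gr^{i-j}\bH^!_1$ coming from the Cartier isomorphisms of Propositions \ref{1.7} and \ref{4.1} together with the $E_2$-degeneracy of the conjugate spectral sequences (Lemma \ref{5.5}). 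Since \eqref{f5.31} is an isomorphism, the composite is a quasi-isomorphism for each fixed $i<p$. Filtering the source of \eqref{f5.41} by $\bigoplus_{i'\leq i}$ and the target by $G_\bullet$, the map respects the filtrations and induces these isomorphisms on the associated graded complexes; as both filtrations are finite on the range $i<p$, the filtered comparison shows that \eqref{f5.41} is itself a quasi-isomorphism.
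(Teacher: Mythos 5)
Your overall architecture is right and your part (i) is essentially sound: the Leibniz-rule computation, using that $v^j(\omega)$ is a cocycle and that the class of $u^{i-j}(h)$ in $\gr_{i-j}\bH^!$ is $h\otimes 1$ (hence horizontal for $\nabla_{\gr}=1\otimes d$), does show $\nabla(u^{i-j}(h))\in\Omega^1_{Y/S}(\log E_a)\otimes\Fil_{i-j-1}\bH^!$ and hence that the image lands in $G_i$. This is in fact a small inversion of the paper's logic, which derives (i) as a corollary of (ii). Part (iii) is also handled as in the paper.

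The genuine gap is in (ii), and you have correctly located it but not closed it. The square (\ref{f5.39}) must commute \emph{exactly}, as maps into $F_{Y/S*}(\Omega^{j+1}_{Y/S}(\log E_a)\otimes\bH^!)$; otherwise $(v\otimes u)^i$ is not a morphism of complexes into the subcomplex $G_iF_{Y/S*}\Omega^\bullet_{Y/S}(\log E_a)(\bH^!)$, and (\ref{f5.40}) and (\ref{f5.41}) do not even make sense as chain maps. Your proposed mechanism --- identifying $\nabla(u^{i-j}(h))$, read in $\Omega^1\otimes\gr_{i-j-1}\bH^!$, with the Kodaira--Spencer cup product via \cite[3.2]{katz} --- lives entirely on the conjugate-associated graded, so at best it proves commutativity of (\ref{f5.39}) modulo $\Omega^{j+1}_{Y/S}(\log E_a)\otimes\Fil_{i-j-2}\bH^!$. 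The discrepancy term in $\Fil_{i-j-2}$ is invisible to any graded-level argument, and nothing in the Hodge/conjugate filtration formalism forces it to vanish. This is precisely why the paper interposes Lemma \ref{5.8}: the required identity is established \emph{upstairs on $X$}, as a compatibility in the derived category between the \v{C}ech representatives $(h_{ij},f_i)$ of the splittings $\phi^{i-j}$ of Theorems \ref{2.2} and \ref{4.2} and the connecting morphisms $\partial$ of the two-step Koszul quotients $K^j/K^{j+2}(\Omega^\bullet_{X/S}(\log D))$, by exhibiting an explicit morphism of short exact sequences of \v{C}ech complexes built from the local Frobenius liftings $\wt F_i$ together with $\wt F_{Y/S}$ (Step 1), extended by antisymmetrization and multiplicativity (Steps 2--3, using Lemma \ref{5.7}). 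Applying $\oplus_k\bR^kf_{1*}$ to that statement yields (\ref{f5.39}) on the nose. Without an argument at this level --- before passing to cohomology sheaves on $Y$ --- your proof of (ii), and with it the chain-map property underlying (iii), does not go through.
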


The essential point in the proof of Proposition \ref{5.6} is the
compatibility (ii), which is deduced from a more general compatibility
in the level of derived category between the morphism $u$ and
the Gauss-Manin connection $\nabla$.

\begin{lem}\label{5.7}
The Koszul filtrations $K^\bullet$ of the complexes $\Omega^\bullet_{X/S}
(\log D)$ and $\Omega^\bullet_{X/S}(D_v,D_h)$ give rise to the short
exact sequences of complexes:
\begin{eqnarray*}
0\ra \gr^{i+1}_K\Omega^\bullet_{X/S}(\log D)\ra K^i/K^{i+2}(\Omega
^\bullet_{X/S}(\log D))\ra\gr^i_K\Omega^\bullet_{X/S}(\log D)\ra 0, \\
0\ra \gr^{i+1}_K\Omega^\bullet_{X/S}(D_v,D_h)\ra K^i/K^{i+2}(\Omega
^\bullet_{X/S}(D_v,D_h))\ra\gr^i_K\Omega^\bullet_{X/S}(D_v,D_h)\ra 0,
\end{eqnarray*}
which yield the connecting morphisms $\partial:\Gamma(i)\ra\Gamma'(i)$
in $D(X)$, where
\[
\Gamma(i)=\left\{
\begin{array}{ll}
f^*\Omega^i_{Y/S}(\log E_a)\otimes\Omega^{\bullet-i}_{X/Y}(\log D/E_a) &
\hbox{for}\,\, \Omega^\bullet_{X/S}(\log D) \\
f^*\Omega^i_{Y/S}(\log E_a)\otimes\Omega^{\bullet-i}_{X/Y}(D_v,D_h) &
\hbox{for}\,\, \Omega^\bullet_{X/S}(D_v,D_h),
\end{array} \right.
\]
\[
\Gamma'(i)=\left\{
\begin{array}{ll}
f^*\Omega^{i+1}_{Y/S}(\log E_a)\otimes\Omega^{\bullet-i}_{X/Y}(\log D/E_a) &
\hbox{for}\,\, \Omega^\bullet_{X/S}(\log D) \\
f^*\Omega^{i+1}_{Y/S}(\log E_a)\otimes\Omega^{\bullet-i}_{X/Y}(D_v,D_h) &
\hbox{for}\,\, \Omega^\bullet_{X/S}(D_v,D_h).
\end{array} \right.
\]
Then for any $i,j$, the following square is commutative:
\begin{equation}
\xymatrix{
\Gamma(i)\otimes\Gamma(j) \ar[r] \ar[d]^\pi & (\Gamma'(i)\otimes\Gamma(j))
\oplus(\Gamma(i)\otimes\Gamma'(j)) \ar[d]^{\pi+\pi} \\
\Gamma(i+j) \ar[r]^\partial & \Gamma'(i+j),
} \label{f5.42}
\end{equation}
where the upper horizontal morphism is $\partial\otimes 1+1\otimes\partial$,
and $\pi$ is the product morphism composed possibly with an isomorphism
of commutativity.
\end{lem}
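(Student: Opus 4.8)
The plan is to identify the connecting morphism $\partial$ with the off-diagonal component of the differential on the two-step quotient $K^i/K^{i+2}$, and then to read off (\ref{f5.42}) from the Leibniz rule for the wedge product. Write $A^\bullet=\Omega^\bullet_{X/S}(\log D)$; the case $A^\bullet=\Omega^\bullet_{X/S}(D_v,D_h)$ is obtained from it by tensoring with the invertible sheaf $\OO_X(-D_v)=f^*\OO_Y(-E_a)$, which is flat over $\OO_X$ and compatible with $d$, with $K^\bullet$ and with $\wedge$, so it suffices to treat $A^\bullet$. First I would choose, locally, a splitting of graded $\OO_X$-modules $K^i/K^{i+2}\cong\gr^i_K A^\bullet\oplus\gr^{i+1}_K A^\bullet$; with respect to it the differential of $K^i/K^{i+2}$ is lower triangular, and its off-diagonal entry is a genuine morphism of complexes $\gr^i_K A^\bullet\to(\gr^{i+1}_K A^\bullet)[1]=\Gamma'(i)$ which represents the connecting morphism $\partial$ of the short exact sequence. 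Explicitly, for a local section $\omega$ of $\Gamma(i)=\gr^i_K A^\bullet$ one lifts it to $\tilde\omega\in K^i/K^{i+2}$, and $\partial(\omega)$ is the $\gr^{i+1}_K$-component of $d\tilde\omega$.

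Second I would record that the Koszul filtration is multiplicative. By its very definition $K^a=\im(f^*\Omega^a_{Y/S}(\log E_a)\otimes\Omega^{\bullet-a}_{X/S}(\log D)\to A^\bullet)$, the wedge product satisfies $K^a\wedge K^b\subseteq K^{a+b}$, so $(A^\bullet,K^\bullet,\wedge)$ is a sheaf of filtered differential graded algebras and $\pi$ is the product induced on $\gr_K A^\bullet$. Since moreover $K^{i+2}\wedge K^j$ and $K^i\wedge K^{j+2}$ both lie in $K^{i+j+2}$, the wedge descends to a morphism of complexes
\[
(K^i/K^{i+2})\otimes(K^j/K^{j+2})\longrightarrow K^{i+j}/K^{i+j+2}
\]
compatible with the three short exact sequences of the lemma (those for $i$, $j$ and $i+j$): it carries the subobject $\gr^{i+1}\otimes\gr^j+\gr^i\otimes\gr^{j+1}$ into $\gr^{i+j+1}$ and induces $\pi$ on the top quotients $\gr^i\otimes\gr^j\to\gr^{i+j}$.

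Third, (\ref{f5.42}) then becomes the statement that $\partial$ is a derivation of bidegree $(1,1)$ for this product. To verify it I would lift local sections $\omega,\eta$ of $\Gamma(i),\Gamma(j)$ to $\tilde\omega,\tilde\eta$ in the respective two-step quotients, so that $\tilde\omega\wedge\tilde\eta$ lifts $\pi(\omega\otimes\eta)$, and apply $d(\tilde\omega\wedge\tilde\eta)=d\tilde\omega\wedge\tilde\eta+(-1)^{|\omega|}\tilde\omega\wedge d\tilde\eta$. Projecting to $\gr^{i+j+1}_K=K^{i+j+1}/K^{i+j+2}$ annihilates everything lying in $K^{i+j+2}$ and leaves precisely $\pi(\partial\omega\otimes\eta)+(-1)^{|\omega|}\pi(\omega\otimes\partial\eta)$; here the sign $(-1)^{|\omega|}$ is exactly the commutativity isomorphism hidden in the second occurrence of $\pi$ in the statement, so this is the commutativity of (\ref{f5.42}). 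The identical computation, carried out after tensoring with $\OO_X(-D_v)$, gives the assertion for $\Omega^\bullet_{X/S}(D_v,D_h)$.

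I expect the only real difficulty to be the sign bookkeeping: one must fix a single cochain-level representative of $\partial$ (via the off-diagonal differential on $K^i/K^{i+2}$) and then track the Koszul signs produced by the shifts $[-i]$, by the shift $[1]$ implicit in $\Gamma'$, and by permuting factors in the wedge product, so that the two copies of $\pi$ in (\ref{f5.42}) carry exactly the commutativity isomorphisms required. This is the point treated in \cite[\S 3]{il90} within the Katz--Oda formalism of \cite[3.2]{katz}, and the verification in the present, slightly more general, setting is word for word the same.
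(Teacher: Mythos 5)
Your proposal is correct and follows essentially the same route as the paper: the key point in both is that the wedge product is compatible with the Koszul filtration ($K^a\wedge K^b\subseteq K^{a+b}$), so it induces a morphism from the short exact sequence of two-step quotients of the tensor-product complex to that of $\Omega^\bullet_{X/S}(\log D)$, and the commutativity of (\ref{f5.42}) then follows from the functoriality of connecting morphisms (equivalently, your cochain-level Leibniz computation), with the $(D_v,D_h)$ case obtained by twisting by $\OO_X(-D_v)$.
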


\begin{proof}
It suffices to prove that the product morphism $\pi:\Omega^\bullet
_{X/S}(\log D)\otimes\Omega^\bullet_{X/S}(\log D)\ra\Omega^\bullet
_{X/S}(\log D)$ is compatible with the Koszul filtration. Thus we
can use the morphisms of the corresponding short exact sequences of
$K^n/K^{n+2}(\Omega^\bullet_{X/S}(\log D)\otimes\Omega^\bullet_{X/S}
(\log D))\ra K^n/K^{n+2}(\Omega^\bullet_{X/S}(\log D))$ to obtain the
conclusion. The proof for $\Omega^\bullet_{X/S}(D_v,D_h)$ is similar.
\end{proof}

Applying $\oplus_k\bR^kf_*$ to (\ref{f5.42}), we obtain the following
commutative square:
\begin{equation}
\xymatrix{
\Theta(i)\otimes\Theta(j) \ar[r] \ar[d]^\pi & (\Theta(i+1)\otimes\Theta(j))
\oplus(\Theta(i)\otimes\Theta(j+1)) \ar[d]^{\pi+\pi} \\
\Theta(i+j) \ar[r]^\nabla & \Theta(i+j+1),
} \label{f5.43}
\end{equation}
where $\Theta(i)=\Omega^i_{Y/S}(\log E_a)(\bH^!)$, the upper horizontal
morphism is $\nabla\otimes 1+1\otimes\nabla$, and $\pi$ is the product
morphism composed possibly with an isomorphism of commutativity.
The diagram (\ref{f5.43}) implies that the complex
$\Omega^\bullet_{Y/S}(\log E_a)(\bH^!)$ is a differential
graded module over $\Omega^\bullet_{Y/S}(\log E_a)$.

\begin{lem}\label{5.8}
For any $i<p$ and any $j\geq 0$, the following squares are commutative:
\[\scriptsize
\xymatrix{
f^*_1\Omega^j_{Y_1/S}(\log E_{a1})\otimes\Omega^{i-j}_{X_1/Y_1}
(\log D_1/E_{a1})[-i+j] \ar[r]^\partial \ar[d]_{v^j\otimes\phi^{i-j}} &
f^*_1\Omega^{j+1}_{Y_1/S}(\log E_{a1})\otimes\Omega^{i-j-1}_{X_1/Y_1}
(\log D_1/E_{a1})[-i+j+1] \ar[d]^{v^{j+1}\otimes\phi^{i-j-1}} \\
F_{X/S*}(f^*\Omega^j_{Y/S}(\log E_a)\otimes\Omega^\bullet_{X/Y}
(\log D/E_a)) \ar[r]^{F_{X/S*}\partial} &
F_{X/S*}(f^*\Omega^{j+1}_{Y/S}(\log E_a)\otimes\Omega^\bullet_{X/Y}
(\log D/E_a)),
}
\]
\[\scriptsize
\xymatrix{
f^*_1\Omega^j_{Y_1/S}(\log E_{a1})\otimes\Omega^{i-j}_{X_1/Y_1}
(D_{v1},D_{h1})[-i+j] \ar[r]^\partial \ar[d]_{v^j\otimes\phi^{i-j}} &
f^*_1\Omega^{j+1}_{Y_1/S}(\log E_{a1})\otimes\Omega^{i-j-1}_{X_1/Y_1}
(D_{v1},D_{h1})[-i+j+1] \ar[d]^{v^{j+1}\otimes\phi^{i-j-1}} \\
F_{X/S*}(f^*\Omega^j_{Y/S}(\log E_a)\otimes\Omega^\bullet_{X/Y}
(D_v,D_h)) \ar[r]^{F_{X/S*}\partial} &
F_{X/S*}(f^*\Omega^{j+1}_{Y/S}(\log E_a)\otimes\Omega^\bullet_{X/Y}
(D_v,D_h)),
}
\]
where $\phi^{i-j}$ are deduced from (\ref{f5.32}) and (\ref{f5.33})
by adjunction of $(F^*_{Y/S},F_{Y/S*})$, and the upper and lower
horizontal morphisms are deduced from the short exact sequences of
$K^j/K^{j+2}(\Omega^\bullet_{X/S}(\log D))$ and
$K^j/K^{j+2}(\Omega^\bullet_{X/S}(D_v,D_h))$.
\end{lem}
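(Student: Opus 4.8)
The plan is to reduce both squares to a single statement about the filtered quasi-isomorphism $\phi_{\wt{F}}$ of (\ref{f2.1}): namely, that a lift $\wt{F}$ of the relative Frobenius of $X$ over $S$ which is compatible with $\wt{D}$ and with $\wt{F}_{Y/S}$ makes $\phi_{\wt{F}}$ into a morphism of \emph{filtered} complexes for the Koszul filtrations $K^\bullet$, with associated graded pieces equal to the maps $v^j\otimes\phi^{i-j}$. Because the connecting morphism $\partial$ attached to the short exact sequences of Lemma \ref{5.7} is functorial for filtered morphisms of complexes, the commutativity of both squares then follows formally. Since the assertion is the equality of two morphisms in $D(X_1)$ built from sheaf homomorphisms, it may be checked after an \'etale base change on $X$; and the second (support) square is obtained from the first by tensoring with $\OO_X(-D_v)=f^*\OO_Y(-E_a)$ exactly as in Section \ref{S4}. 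I will therefore treat only the first square and work \'etale-locally.

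First I would pass to an \'etale neighborhood of one of the types (i)--(iii) of Lemma \ref{1.5}, on which Step 3 of the proof of Theorem \ref{2.2} produces a lift $\wt{F}:\wt{X}\ra\wt{X}_1$ compatible with $\wt{D}$ and compatible with $\wt{F}_{Y/S}$, i.e.\ making the square (\ref{f2.3}) commute. There $\phi^1_{\wt{F}}=\wt{F}^*/p$ is defined on all of $\Omega^1_{X_1/S}(\log D_1)$, and the commutativity $\wt{F}^*\circ\wt{f}_1^*=\wt{f}^*\circ\wt{F}_{Y/S}^*$ shows that $\phi^1_{\wt{F}}$ carries the subsheaf $f_1^*\Omega^1_{Y_1/S}(\log E_{a1})$ into $f^*\Omega^1_{Y/S}(\log E_a)$, where it restricts to $f^*v^1$ with $v^1=\wt{F}_{Y/S}^*/p$ as in (\ref{f5.36}). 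Consequently $\phi_{\wt{F}}$, obtained by prolonging $\phi^1_{\wt{F}}$ through the exterior powers, preserves $K^\bullet$, and the map it induces on the graded piece $\gr^j_K\Omega^i_{X_1/S}(\log D_1)=f_1^*\Omega^j_{Y_1/S}(\log E_{a1})\otimes\Omega^{i-j}_{X_1/Y_1}(\log D_1/E_{a1})$ is precisely $v^j\otimes\phi^{i-j}$: on the base factor it is $\wedge^j v^1=v^j$, and on the relative factor it is the relative map $\phi^{i-j}$ of (\ref{f5.32}), which is by definition (Step 1 of the proof of Theorem \ref{2.2}) the image of $\phi^{i-j}_{\wt{F}}$ under the projection onto relative forms.

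Next I would invoke the functoriality of the connecting morphism. Applying $F_{X/S*}$ to the short exact sequences $0\ra\gr^{j+1}_K\ra K^j/K^{j+2}\ra\gr^j_K\ra 0$ of Lemma \ref{5.7}, for both $X_1$ and $X$, the filtered morphism $\phi_{\wt{F}}$ induces a morphism between the two sequences whose outer vertical arrows realize $v^j\otimes\phi^{i-j}$ and $v^{j+1}\otimes\phi^{i-j-1}$. The connecting morphisms of the two triangles are thus intertwined by these graded pieces, which is exactly the asserted commutative square with horizontal arrows $\partial$ and $F_{X/S*}\partial$. Finally, to return to the global setting I would use that the globally defined maps $\phi^{i-j}$ and $v^j$ agree locally with those coming from any compatible $\wt{F}$ (two compatible lifts differing by a derivation that vanishes on $f_1^*\Omega^1_{Y_1/S}(\log E_{a1})$, as in Step 2 of the proof of Theorem \ref{2.2}); since equality of morphisms in $D(X_1)$ is \'etale-local, the square commutes globally.

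The main obstacle I expect is the careful bookkeeping in the second paragraph: verifying that $\phi_{\wt{F}}$ is genuinely a morphism of filtered complexes and that the induced square on connecting morphisms is the one written in the lemma, keeping track of the shifts $[-i+j]$, the truncations $\tau_{\leq i-j}$ implicit in (\ref{f5.32})--(\ref{f5.33}), and the adjunction $(F^*_{Y/S},F_{Y/S*})$ relating the relative Frobenius $F_{X/Y}$ of (\ref{f5.32}) to the $F_{X/S}$ appearing in the lemma. Once this identification of graded pieces is pinned down at the level of the explicit local lift $\wt{F}$, the commutativity of both squares is formal, exactly as in \cite[\S 3]{il90}.
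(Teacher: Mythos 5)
Your local analysis is essentially the right one, and it is the same computation that underlies the paper's Step 1: a lift $\wt{F}$ compatible with $\wt{D}$ and with $\wt{F}_{Y/S}$ sends $f_1^*\Omega^1_{Y_1/S}(\log E_{a1})$ into $f^*\Omega^1_{Y/S}(\log E_a)$, where it restricts to $f^*v^1$, hence respects the Koszul filtration and induces $v^j\otimes\phi^{i-j}_{\wt{F}}$ on graded pieces; functoriality of the connecting morphism then gives the square for that particular local lift. The reduction of the second square to the first by tensoring with $\OO_X(-D_v)=f^*\OO_Y(-E_a)$ is also fine.

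The gap is the final globalization step. Equality of morphisms in $D(X_1)$ is \emph{not} an \'etale-local property: $\Hom_{D(X_1)}(\FF[-i+j],C^\bullet)$ receives contributions from higher cohomology of the local $\mathcal{H}om$'s, so a morphism can vanish on each member of a covering without vanishing globally. This is exactly why the global $\phi^{i-j}$ is not ``a morphism that locally agrees with $\phi_{\wt{F}_i}$'' but is \emph{defined} as the \v{C}ech cochain $(h_{ij},f_i)$ with values in $F_*\check{\CC}(\UU,\Omega^\bullet_{X/Y}(\log D/E_a))$, composed with the inverse of the quasi-isomorphism to the \v{C}ech complex. Consequently the commutativity of the square must be verified for these global \v{C}ech representatives, not just for a single local lift. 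The paper's proof does precisely this: it exhibits $v=(0,f_i^Y)$, $u_1=(h_{ij}^X,f_i^X)$ and $u=(h_{ij},f_i)$ as a strictly commutative morphism of short exact sequences of \v{C}ech complexes, the essential point being that the homotopy components $h_{ij}^X$ vanish on $f_1^*\Omega^1_{Y_1/S}(\log E_{a1})$ (both $\wt{F}_i$ and $\wt{F}_j$ lie over the \emph{same} global $\wt{F}_{Y/S}$), so that $v$ has zero homotopy component and the diagram closes on the nose. Your proposal never engages with the $h_{ij}$'s, so the claim that ``the square commutes globally because it commutes locally'' does not follow; you would also need to explain how the \v{C}ech-glued $\phi$ becomes a filtered morphism (the paper avoids this for higher $i,j$ by instead using the antisymmetrization/product construction of $\phi^i$ from $\phi^1$ together with Lemma~\ref{5.7} and the module structure over $\Omega^\bullet_{Y/S}(\log E_a)$).
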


\begin{proof}
We only deal with the case for $\Omega^\bullet_{X/S}(\log D)$.
The proof divides into three steps.

(Step 1: $i=1,j=0$) Recall the definition of $\phi^1:\Omega^1_{X'/Y}
(\log D'/E_a)[-1]\ra F_*\Omega^\bullet_{X/Y}(\log D/E_a)$ given in
Theorem \ref{2.2}. We choose an \'etale covering $\UU=(U_i)_{i\in I}$
of $X$, and a lifting $\wt{F}_i:\wt{U}_i\ra\wt{U}'_i$ of $F$ compatible
with $\wt{D}$ for each $i\in I$. On $U'_i$, we take
\[
f_i=\wt{F}^*_i/p:\Omega^1_{X'/Y}(\log D'/E_a)|_{U'_i}
\ra F_*\Omega^1_{X/Y}(\log D/E_a)|_{U'_i}.
\]
On $U'_{ij}=U'_i\cap U'_j$, we take
\[
h_{ij}=(\wt{F}^*_j-\wt{F}^*_i)/p:\Omega^1_{X'/Y}(\log D'/E_a)|_{U'_{ij}}
\ra F_*\OO_X|_{U'_{ij}}.
\]
We have the following relations:
\[
df_i=0,\,\,f_j-f_i=dh_{ij}\,\,\hbox{and}\,\,h_{ij}+h_{jk}=h_{ik}.
\]
The morphism $\phi^1$ is the composition of $u=(h_{ij},f_i):\Omega^1_{X'/Y}
(\log D'/E_a)[-1]\ra F_*\check{\CC}(\UU,\Omega^\bullet_{X/Y}(\log D/E_a))$
and the inverse of the quasi-isomorphism $F_*\Omega^\bullet_{X/Y}
(\log D/E_a)\ra F_*\check{\CC}(\UU,\Omega^\bullet_{X/Y}(\log D/E_a))$.
By adjunction of $(F^*_{Y/S},F_{Y/S*})$ and abuse of notation,
we have a morphism
\begin{equation*}
u:\Omega^1_{X_1/Y_1}(\log D_1/E_{a1})[-1]\ra F_{X/S*}\check{\CC}(\UU,
\Omega^\bullet_{X/Y}(\log D/E_a)).
\end{equation*}

Similarly, the liftings $\wt{U}_i\stackrel{\wt{F}_i}{\ra}\wt{U}'_i\ra
(\wt{U}_i)_1$ of $F_{X/S}$ provide a morphism
\[
u_1:\Omega^1_{X_1/S}(\log D_1)[-1]\ra F_{X/S*}\check{\CC}
(\UU,\Omega^\bullet_{X/S}(\log D)).
\]
Since $\check{\CC}(\UU,\Omega^\bullet_{X/S}(\log D))$ coincides with
$\check{\CC}(\UU,\Omega^\bullet_{X/S}(\log D)/K^2)$ in degree $\leq 1$,
we can consider $u_1$ with values in $F_{X/S*}\check{\CC}
(\UU,\Omega^\bullet_{X/S}(\log D)/K^2)$ to obtain a morphism
\[
u_1:\Omega^1_{X_1/S}(\log D_1)[-1]\ra F_{X/S*}\check{\CC}
(\UU,\Omega^\bullet_{X/S}(\log D)/K^2).
\]

Finally, we take $v=\wt{F}^*_{Y/S}/p:\Omega^1_{Y_1/S}(\log E_{a1})\ra
F_{Y/S*}\Omega^1_{Y/S}(\log E_a)$. By adjunction of $(F^*_{Y/S},F_{Y/S*})$
and abuse of notation, we have a homomorphism
\[
v:F^*_{Y/S}\Omega^1_{Y_1/S}(\log E_{a1})\ra\Omega^1_{Y/S}(\log E_a).
\]
Applying $f^*$ to the above homomorphism and using the
commutativity of (\ref{f5.27}), we have a homomorphism
\[
v:F^*_{X/S}f^*_1\Omega^1_{Y_1/S}(\log E_{a1})
\ra f^*\Omega^1_{Y/S}(\log E_a).
\]
By adjunction of $(F^*_{X/S},F_{X/S*})$ and the composition with
a natural morphism, we have a morphism
\begin{eqnarray*}
v:f^*_1\Omega^1_{Y_1/S}(\log E_{a1})[-1] &\ra& F_{X/S*}(f^*\Omega^1_{Y/S}
(\log E_a)\otimes\check{\CC}(\UU,\Omega^{\bullet-1}_{X/Y}(\log D/E_a)))
\nonumber \\
&=& F_{X/S*}\check{\CC}(\UU,f^*\Omega^1_{Y/S}(\log E_a)\otimes\Omega^
{\bullet-1}_{X/Y}(\log D/E_a)).
\end{eqnarray*}

We shall prove that $v$, $u_1$ and $u$ fit into the following
commutative diagram with exact rows:
\[\tiny
\xymatrix{
0 \ar[r] & f^*_1\Omega^1_{Y_1/S}(\log E_{a1})[-1] \ar[r] \ar[d]^v &
\Omega^1_{X_1/S}(\log D_1)[-1] \ar[r] \ar[d]^{u_1} &
\Omega^1_{X_1/Y_1}(\log D_1/E_{a1})[-1] \ar[r] \ar[d]^{u} & 0 \\
0 \ar[r] & F_{X/S*}\check{\CC}(\UU,f^*\Omega^1_{Y/S}(\log E_a)\otimes
\Omega^{\bullet-1}_{X/Y}(\log D/E_a)) \ar[r] &
F_{X/S*}\check{\CC}(\UU,\Omega^\bullet_{X/S}(\log D)/K^2) \ar[r] &
F_{X/S*}\check{\CC}(\UU,\Omega^\bullet_{X/Y}(\log D/E_a)) \ar[r] & 0 \\
0 \ar[r] & F_{X/S*}(f^*\Omega^1_{Y/S}(\log E_a)\otimes
\Omega^{\bullet-1}_{X/Y}(\log D/E_a)) \ar[r] \ar[u]_{q.i.} &
F_{X/S*}(\Omega^\bullet_{X/S}(\log D)/K^2) \ar[r] \ar[u]_{q.i.} &
F_{X/S*}\Omega^\bullet_{X/Y}(\log D/E_a) \ar[r] \ar[u]_{q.i.} & 0.
}
\]
Since $v=(0,f^Y_i)$, $u_1=(h_{ij}^X,f^X_i)$ and $u=(h_{ij},f_i)$, the
upper diagram is commutative. The lower one is a quasi-isomorphism of
short exact sequences of complexes. The morphism of distinguished
triangles defined by this diagram gives the commutativity of the
diagram in Lemma \ref{5.8} for the case $i=1,j=0$.

(Step 2: $j=0$) Recall that $\phi^i$ ($1\leq i<p$) is deduced from
$\phi^1$ by the composition:
{\scriptsize
\[
\Omega^i_{X'/Y}(\log D'/E_a)[-i]\stackrel{a}{\ra}\Omega^1_{X'/Y}(\log D'/E_a)
^{\otimes i}[-i]\stackrel{(\phi^1)^{\otimes i}}{\ra}(F_*\Omega^\bullet_{X/Y}
(\log D/E_a))^{\otimes i}\stackrel{\pi}{\ra}F_*\Omega^\bullet_{X/Y}
(\log D/E_a),
\]}
where $\pi$ is the product map and $a$ is the antisymmetrization map
\[
a(x_1\wedge\cdots\wedge x_i)=\frac{1}{i!}\sum_{\sigma\in S_i}\sgn(\sigma)
x_{\sigma(1)}\wedge\cdots\wedge x_{\sigma(i)}.
\]
Consider the following diagram:
\[
\xymatrix{
\Omega^i_{X_1/Y_1}(\log D_1/E_{a1})[-i] \ar[r] \ar[d]_a &
f^*_1\Omega^1_{Y_1/S}(\log E_{a1})\otimes\Omega^{i-1}_{X_1/Y_1}
(\log D_1/E_{a1})[-i+1] \ar[d]^{1\otimes a} \\
\Omega^1_{X_1/Y_1}(\log D_1/E_{a1})^{\otimes i}[-i] \ar[d]_{(\phi^1)^
{\otimes i}} \ar[r] &
f^*_1\Omega^1_{Y_1/S}(\log E_{a1})\otimes(\Omega^1_{X_1/Y_1}(\log D_1/E_{a1}))
^{\otimes i-1}[-i+1] \ar[d]^{1\otimes(\phi^1)^{\otimes i-1}} \\
(F_{X/S*}\Omega^\bullet_{X/Y}(\log D/E_a))^{\otimes i} \ar[d]_\pi
\ar[r] &
F_{X/S*}(f^*\Omega^1_{Y/S}(\log E_a)\otimes(\Omega^\bullet_{X/Y}(\log D/E_a))
^{\otimes i-1}) \ar[d]^\pi \\
F_{X/S*}\Omega^\bullet_{X/Y}(\log D/E_a) \ar[r] &
F_{X/S*}(f^*\Omega^1_{Y/S}(\log E_a)\otimes\Omega^\bullet_{X/Y}(\log D/E_a)),
}
\]
where the horizontal morphisms from top to bottom are $\partial$, $\sum
(1\otimes\cdots\otimes\partial\otimes\cdots\otimes 1)$, $\sum F_{X/S*}
(1\otimes\cdots\otimes\partial\otimes\cdots\otimes 1)$ and $F_{X/S*}\partial$ respectively.
Since the map $a$ is compatible with the Koszul filtration, we obtain the
commutativity of the upper diagram. The commutativity of the mediate one
follows from (Step 1), and the commutativity of the lower one follows from
Lemma \ref{5.7}.

(Step 3: general case) It follows from (Step 2) since $\Omega^\bullet_{Y/S}
(\log E_a)(\bH^!)$ is a graded differential module over $\Omega^\bullet_{Y/S}
(\log E_a)$.
\end{proof}

\begin{proof}[Proof of Proposition \ref{5.6}]
By applying $\oplus_k\bR^kf_{1*}$ to the diagrams in Lemma \ref{5.8},
we obtain the commutativity of the diagram (\ref{f5.39}).
By definition, the image of $v^j\otimes u^{i-j}$ is already contained
in $F_{Y/S*}(\Omega^j_{Y/S}(\log E_a)\otimes\Fil_{i-j}\bH^!)$.
Since (\ref{f5.39}) is commutative, we have
\[
\nabla(\im(v^j\otimes u^{i-j}))\subset\im(v^{j+1}\otimes u^{i-j-1})
\subset F_{Y/S*}(\Omega^{j+1}_{Y/S}(\log E_a)\otimes\Fil_{i-j-1}\bH^!),
\]
hence $\im(v^j\otimes u^{i-j})\subset G_iF_{Y/S*}\Omega^\bullet_{Y/S}
(\log E_a)(\bH^!)$ by the definition of $G_i$.
By construction, $\gr^i\Omega^\bullet_{Y_1/S}(\log E_{a1})(\bH^!_1)
\ra\gr^G_iF_{Y/S*}\Omega^\bullet_{Y/S}(\log E_a)(\bH^!)$ is a
quasi-isomorphism, which implies that (\ref{f5.41}) is a
quasi-isomorphism.
\end{proof}

Furthermore, we can eliminate the hypothesis (iii) in Assumption \ref{5.4}
to obtain the following main theorem in this paper for $!=$ null or $\dag$.

\begin{thm}\label{5.9}
Let $f:X\ra Y$ be an $E$-semistable $S$-morphism with an adapted divisor
$D$, and $\wt{f}:(\wt{X},\wt{D})\ra(\wt{Y},\wt{E}_a)$ a lifting of $f:(X,D)
\ra(Y,E_a)$ over $\wt{S}$. Assume that $f$ is proper and $\dim(X/Y)\leq p$.
Then for any $i<p$, we have a morphism in $D(Y_1)$:
\begin{equation}
\phi^i=\phi^i_{\wt{f}}:\gr^i\Omega^\bullet_{Y_1/S}(\log E_{a1})(\bH^!_1)
\ra G_iF_{Y/S*}\Omega^\bullet_{Y/S}(\log E_a)(\bH^!), \label{f5.44}
\end{equation}
such that the composition of (\ref{f5.44}) with the projection onto
$\gr^G_i$ is a quasi-isomorphism, which is the Cartier isomorphism
(\ref{f5.31}).

Furthermore, for any $i<p$, we have an isomorphism in $D(Y_1)$:
\begin{equation}
\phi=\sum_{j\leq i}\phi^j:\bigoplus_{j\leq i}\gr^j\Omega^\bullet_{Y_1/S}
(\log E_{a1})(\bH^!_1)\stackrel{\sim}{\ra}G_iF_{Y/S*}\Omega^\bullet_{Y/S}
(\log E_a)(\bH^!). \label{f5.45}
\end{equation}
For any $i>\dim(X/S)$, we have $\gr^i\Omega^\bullet_{Y/S}(\log E_a)
(\bH^!)=0$. Consequently, if $\dim(X/S)<p$, then (\ref{f5.45})
gives rise to a decomposition in $D(Y_1)$:
\begin{equation}
\phi:\bigoplus_i\gr^i\Omega^\bullet_{Y_1/S}(\log E_{a1})(\bH^!_1)
\stackrel{\sim}{\ra}F_{Y/S*}\Omega^\bullet_{Y/S}(\log E_a)(\bH^!).
\label{f5.46}
\end{equation}
\end{thm}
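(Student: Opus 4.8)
The plan is to follow Illusie's argument in \cite[\S 3]{il90} as closely as possible. Proposition \ref{5.6} already yields everything we want \emph{under} Assumption \ref{5.4}, so the only genuine work is to remove hypothesis (iii) of Assumption \ref{5.4} — the global existence of a lift $\wt{F}_{Y/S}$ compatible with $\wt{E}_a$ — by gluing the local constructions in the derived category $D(Y_1)$ via the \v{C}ech procedure of \cite[2.1(d)]{di}. First I would note that, exactly as in the local construction recalled at the beginning of Section \ref{S2}, a lift $\wt{F}_{Y/S}$ of the relative Frobenius compatible with $\wt{E}_a$ always exists locally for the \'etale topology on $Y$: if $\wt{Y}$ is \'etale over $\A^e_{\wt{S}}$ with coordinates $\{\wt{y}_1,\dots,\wt{y}_e\}$ and $\wt{E}_a=\divisor_0(\wt{y}_1\cdots\wt{y}_m)$, then $\wt{F}^*_{Y/S}(\wt{y}_k\otimes 1)=\wt{y}_k^p$ defines such a lift. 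Hence I can choose an \'etale covering $(V_\alpha)$ of $Y$ together with lifts $\wt{F}_{Y/S,\alpha}$ compatible with $\wt{E}_a$ on each $V_\alpha$; on each $V_\alpha$ all of Assumption \ref{5.4} holds, and Proposition \ref{5.6} produces a morphism $\phi^i_\alpha=(v\otimes u)^i_\alpha$ landing in $G_iF_{Y/S*}\Omega^\bullet_{Y/S}(\log E_a)(\bH^!)$ whose composition with the projection onto $\gr^G_i$ is the Cartier isomorphism (\ref{f5.31}).

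Second, I would compare two such local choices over an overlap $V_{\alpha\beta}$. There the two lifts differ by a derivation $h_{\alpha\beta}=(\wt{F}^*_{Y/S,\beta}-\wt{F}^*_{Y/S,\alpha})/p\colon\Omega^1_{Y_1/S}(\log E_{a1})\ra F_{Y/S*}\OO_Y$, as recalled in \cite[4.2.3]{di}. Both $v^j$ and $u^{i-j}$ depend on $\wt{F}_{Y/S}$: the former changes by $d\circ h_{\alpha\beta}$, while the latter changes through the corresponding change of compatible lift of $F_{X/Y}$, controlled by Step 2 of the proof of Theorem \ref{2.2}. Tracking the combined variation through the computation of \cite[2.1(c)]{di}, one finds that $\phi^i_\beta$ and $\phi^i_\alpha$ differ by an explicit homotopy $H_{\alpha\beta}$, and that these homotopies satisfy the transitivity relation $H_{\alpha\beta}+H_{\beta\gamma}=H_{\alpha\gamma}$ on triple overlaps. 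This is \textbf{the step I expect to be the main obstacle}: one must verify that the simultaneous variation of $v$ and of $u$ assembles into a genuine homotopy of morphisms of complexes respecting the filtration $G_\bullet$, and that the cocycle condition holds, so that the \v{C}ech globalization is legitimate and the glued morphism is independent of the choice of $\wt{F}_{Y/S}$.

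Third, granted the homotopy data $(\phi^i_\alpha,H_{\alpha\beta})$ satisfying the cocycle condition, the \v{C}ech globalization of \cite[2.1(d)]{di} assembles a single morphism $\phi^i=\phi^i_{\wt{f}}$ in $D(Y_1)$, independent of all choices, which is precisely (\ref{f5.44}). Since compatibility with the projection onto $\gr^G_i$ is a local statement, this composition is again the Cartier isomorphism (\ref{f5.31}), hence a quasi-isomorphism.

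Finally, I would deduce the decomposition. For (\ref{f5.45}) I would induct on $i$ using the distinguished triangles $G_{i-1}\ra G_i\ra\gr^G_i$: since $\phi^i$ induces the quasi-isomorphism (\ref{f5.31}) on $\gr^G_i\cong E_1^{-i+\bullet,i}$, if $\sum_{j\leq i-1}\phi^j$ is an isomorphism onto $G_{i-1}$ then so is $\sum_{j\leq i}\phi^j$ onto $G_i$, giving the isomorphism $\bigoplus_{j\leq i}\gr^j\Omega^\bullet_{Y_1/S}(\log E_{a1})(\bH^!_1)\stackrel{\sim}{\ra}G_iF_{Y/S*}\Omega^\bullet_{Y/S}(\log E_a)(\bH^!)$. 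For the last assertion, the Hodge filtration on $\bH^!$ satisfies $\gr^k\bH^!=0$ for $k>\dim(X/Y)$, since $\Omega^k_{X/Y}(\log D/E_a)=0$ (resp.\ $\Omega^k_{X/Y}(D_v,D_h)=0$) for such $k$; combined with $\Omega^j_{Y/S}(\log E_a)=0$ for $j>\dim(Y/S)$, every term $\Omega^j_{Y/S}(\log E_a)\otimes\gr^{i-j}\bH^!$ of the Kodaira-Spencer complex (\ref{f5.21}) vanishes once $i>\dim(X/S)$, whence $\gr^i\Omega^\bullet_{Y/S}(\log E_a)(\bH^!)=0$ for such $i$. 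When $\dim(X/S)<p$ this forces $G_{p-1}=F_{Y/S*}\Omega^\bullet_{Y/S}(\log E_a)(\bH^!)$, so (\ref{f5.45}) with $i=p-1$ becomes the desired decomposition (\ref{f5.46}).
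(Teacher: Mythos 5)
Your overall strategy coincides with the paper's: local existence of a lift $\wt{F}_{Y/S}$ compatible with $\wt{E}_a$, application of Proposition \ref{5.6} on each piece of an \'etale covering, comparison of the local data on overlaps via the derivation $h_{ij}=(\wt{F}^*_{Y/S,j}-\wt{F}^*_{Y/S,i})/p$, and a gluing step; your deduction of (\ref{f5.45}) by induction on the filtration $G_\bullet$ and of (\ref{f5.46}) from the vanishing of $\gr^i$ for $i>\dim(X/S)$ is also correct and matches the paper. The problem is the step you yourself flag as the main obstacle: you leave it unproved, and the structure you propose for it is not the right one. A na\"ive \v{C}ech globalization in the style of \cite[2.1(d)]{di}, with homotopies $H_{ij}$ satisfying the additive cocycle relation $H_{ij}+H_{jk}=H_{ik}$, does not work here: while $v^1_j-v^1_i=dh_{ij}$ is additive, the map $v^m$ in degree $m\geq 2$ is the $m$-th exterior power of $v^1$, so its variation on an overlap is a polynomial of degree $m$ in $h_{ij}$, and the resulting corrections to $\phi^i$ do not satisfy an additive cocycle condition. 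This is precisely why Illusie, and the paper following him, replace the homotopy formalism by the notion of a \emph{coherent system of local splittings}: the comparison datum is the single degree-one map $h_{ij}$, extended to the divided power algebra $\Gamma^\bullet\Omega^1_{Y_1/S}(\log E_{a1})$ by $x^{[m]}\mapsto h_{ij}(x)^m/m!$, and the relation to be verified is $u^n_j-u^n_i=(u_ih_{ij})\circ d$, where $d$ is the differential of the complex $NC(\gr^\bullet\bH^!_1)$ of \cite[(4.1.7)]{il90}, together with $v^1_j-v^1_i=dh_{ij}$ and $h_{ij}+h_{jk}=h_{ik}$. The paper establishes these relations by the argument of \cite[Proposition 4.3]{il90} and then invokes \cite[Theorem 4.20]{il90} to produce the morphisms $\phi^i$ of (\ref{f5.44}) with all required properties, rather than gluing by hand. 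So your proposal has the right skeleton, but the gluing machinery you sketch would not close in degrees $\geq 2$; you need Illusie's \S 4 formalism (or an equivalent reconstruction of it) at exactly the point you defer.
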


\begin{proof}
Since a lifting of the relative Frobenius morphism $F_{Y/S}:Y\ra Y_1$
always exists locally, Proposition \ref{5.6} is indeed a local version
of Theorem \ref{5.9}. The idea of the proof is to use Proposition \ref{5.6}
to obtain a coherent system of local splittings for $G_{p-1}F_{Y/S*}\Omega^\bullet
_{Y/S}(\log E_a)(\bH^!)$.

Take an \'etale covering $\UU=(U_i)_{i\in I}$ of $Y$.
By Proposition \ref{5.6}, on $U_i$ for any $i\in I$, there is a splitting $(v_i,u_i)$
of $G_{p-1}F_{Y/S*}\Omega^\bullet_{Y/S}(\log E_a)(\bH^!)|_{U_i}$, where $v_i,u_i$ are
defined as in (\ref{f5.36}) and (\ref{f5.37}) (see \cite[4.19]{il90} for the notion
``splitting''). By a similar argument to that of \cite[Proposition 4.3]{il90},
on $U_{ij}=U_i\cap U_j$ for any pair $(i,j)$, there is a homomorphism
$h_{ij}:\Omega^1_{Y_1/S}(\log E_{a1})|_{U_{ij1}}\ra F_{Y/S*}\OO_Y|_{U_{ij}}$,
such that the following conditions hold:
\begin{eqnarray*}
v^1_j-v^1_i &=& dh_{ij}\quad\hbox{on}\,\,U_{ij}, \\
h_{ij}+h_{jk} &=& h_{ik}\quad\hbox{on}\,\,U_{ijk}, \\
u^n_j-u^n_i &=& (u_ih_{ij})\circ d\quad\hbox{on}\,\,U_{ij}\,\,\hbox{for}\,\,n<p,
\end{eqnarray*}
where in the third equality, $d:\gr^n\bH^!_1\ra \oplus_{0<m\leq n}\gr^{n-m}\bH^!_1\otimes
\Gamma^m\Omega^1_{Y_1/S}(\log E_{a1})$ is the differential map of the complex $NC(\gr^\bullet\bH^!_1)$
defined as in \cite[(4.1.7)]{il90}, $u_ih_{ij}$ is given by $(u_ih_{ij})(x\otimes a)=u_i(x)h_{ij}(a)$,
and the map $h_{ij}:\Gamma^m\Omega^1_{Y_1/S}(\log E_{a1})\ra F_{Y/S*}\OO_Y$ on $U_{ij}$ is defined by
the polynomial map $x^{[m]}\mapsto h_{ij}(x)^m/m!$ for $x\in\Omega^1_{Y_1/S}(\log E_{a1})$ (note that
$\Gamma\Omega^1_{Y_1/S}(\log E_{a1})$ is the divided power algebra of $\Omega^1_{Y_1/S}(\log E_{a1})$).

Thus $G_{p-1}F_{Y/S*}\Omega^\bullet_{Y/S}(\log E_a)(\bH^!)$ has a coherent system of local splittings
$(\UU=(U_i),(v_i),(u_i),(h_{ij}))$. It follows from \cite[Theorem 4.20]{il90} that there exist morphisms
$\phi^i$ (\ref{f5.44}) satisfying all of the required properties.
\end{proof}

\begin{cor}\label{5.10}
Let $f:X\ra Y$ be an $E$-semistable $S$-morphism with an adapted
divisor $D$. Assume that $f$ is proper and $g:Y\ra S$ is proper.
Assume that $f:(X,D)\ra(Y,E_a)$ has a lifting $\wt{f}:(\wt{X},\wt{D})
\ra(\wt{Y},\wt{E}_a)$ over $\wt{S}$ and $\dim(X/S)<p$.
Then the Hodge spectral sequence for $\Omega^\bullet_{Y/S}
(\log E_a)(\bH^!)$ and $\bR g_*$ degenerates in $E_1$:
\[
E^{ij}_1=\bR^{i+j}g_*\gr^i\Omega^\bullet_{Y/S}(\log E_a)(\bH^!)
\Rightarrow \bR^{i+j}g_*\Omega^\bullet_{Y/S}(\log E_a)(\bH^!),
\]
and each $E^{ij}_1$ is locally free of finite type, and of formation
compatible with any base change.
\end{cor}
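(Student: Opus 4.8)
The plan is to deduce this from the global decomposition (\ref{f5.46}) of Theorem \ref{5.9} by the relative form of the Deligne--Illusie principle that a derived-category decomposition forces degeneration of the Hodge spectral sequence. Write $K=\Omega^\bullet_{Y/S}(\log E_a)(\bH^!)$, equipped with its Hodge filtration $\Fil$, so that $\gr^i_{\Fil}K=\gr^i\Omega^\bullet_{Y/S}(\log E_a)(\bH^!)$ is the Kodaira--Spencer complex (\ref{f5.21}), and write $K_1=\Omega^\bullet_{Y_1/S}(\log E_{a1})(\bH^!_1)$ on $Y_1$. Let $g_1:Y_1\ra S$ be the structural morphism of diagram (\ref{f5.27}), so that $g=g_1\circ F_{Y/S}$. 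Since $F_{Y/S}$ is finite, hence affine, we have $\bR g_*=\bR g_{1*}\circ F_{Y/S*}$ with no higher terms. First I would apply $\bR g_{1*}$ to (\ref{f5.46}); as $\bR g_{1*}$ commutes with finite direct sums (and the sum is finite by the vanishing $\gr^i=0$ for $i>\dim(X/S)$), this produces a canonical isomorphism in $D(S)$
\[
\bR g_*\Omega^\bullet_{Y/S}(\log E_a)(\bH^!)\stackrel{\sim}{\ra}
\bigoplus_i\bR g_{1*}\gr^i\Omega^\bullet_{Y_1/S}(\log E_{a1})(\bH^!_1),
\]
so the abutment of the Hodge spectral sequence splits as a direct sum indexed by $i$.

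The next step is to identify the right-hand summands with the $E_1$-terms. The scheme $Y_1$ is the base change of $Y$ along $F_S$, and each term $\Omega^j_{Y_1/S}(\log E_{a1})\otimes\gr^{i-j}\bH^!_1$ of $\gr^iK_1$ is the $F_S$-pullback of the corresponding term of $\gr^iK$. Here I would invoke Corollaries \ref{2.4} and \ref{4.7}, whose conclusions (being local on $Y$, hence available even without hypothesis (iii) of Assumption \ref{5.4}) show that every $R^jf_*\Omega^i_{X/Y}(\log D/E_a)$ and $R^jf_*\Omega^i_{X/Y}(D_v,D_h)$ is locally free of finite type and of formation compatible with base change; consequently each $\gr^i\bH^!$, and each term of the Kodaira--Spencer complex, is locally free, and the Hodge spectral sequences (\ref{f5.14}) and (\ref{f5.15}) degenerate in $E_1$, so the $\gr^i$ are well defined. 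Granting base change along $F_S$ for $\bR g_*\gr^iK$, one then gets $\bR g_{1*}\gr^iK_1\cong F_S^*\bR g_*\gr^iK$; since $F_S$ is the identity on the underlying topological space, $F_S^*$ preserves ranks, whence $\bR^{i+j}g_{1*}\gr^iK_1$ has the same rank as $E_1^{ij}=\bR^{i+j}g_*\gr^iK$.

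Combining the two steps gives, for each $n$, the identification $\bR^ng_*K\cong\bigoplus_{i+j=n}\bR^{i+j}g_{1*}\gr^iK_1$, hence $\sum_n\rank\bR^ng_*K=\sum_{i,j}\rank E_1^{ij}$. Since the Hodge spectral sequence always satisfies the reverse inequality (the graded of the abutment is a subquotient of $E_1$), this equality forces $E_1=E_\infty$, i.e. degeneration, and forces the $E_1$-terms to have the expected rank. The step I expect to be the main obstacle is making this rank comparison rigorous over a base $S$ that is neither a field nor reduced and along which $F_S$ is not flat: the base-change compatibility of $\bR g_*\gr^iK$ is needed to run the comparison, yet it is also part of the conclusion. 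I would break this circularity by reducing to fibers: because all the higher direct images in sight are of formation compatible with base change, it suffices to verify the numerical equality after base change to each $\Spec\kappa(s)$, $s\in S$, where it becomes the Deligne--Illusie degeneration over a field and follows from a dimension count. Having checked the equality fiberwise, I would then invoke the Grothendieck cohomology-and-base-change criterion, as packaged in \cite[\S 4]{il90}, to conclude simultaneously that the spectral sequence degenerates in $E_1$, that each $E_1^{ij}$ is locally free of finite type, and that its formation commutes with arbitrary base change.
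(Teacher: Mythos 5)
Your proposal is correct and follows essentially the same route as the paper, whose proof consists precisely of invoking the decomposition (\ref{f5.46}) together with the argument of \cite[4.1.2]{di}; you have simply written out that Deligne--Illusie argument (splitting of the abutment, rank comparison via $F_S$, reduction to fibers, and the cohomology-and-base-change criterion) in detail. Your handling of the non-flatness of $F_S$ by passing to fibers and then applying the Grothendieck criterion is exactly the standard mechanism implicit in the cited reference.
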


\begin{proof}
We can use the decomposition (\ref{f5.46}) and an analogous argument
to \cite[4.1.2]{di} to complete the proof.
\end{proof}

\begin{cor}\label{5.11}
Let $K$ be a field of characteristic zero, $S=\Spec K$, $X,Y$ proper and
smooth $S$-schemes, and $f:X\ra Y$ an $E$-semistable $S$-morphism with
an adapted divisor $D$. Then the Hodge spectral sequence for $\Omega
^\bullet_{Y/S}(\log E_a)(\bH^!)$ degenerates in $E_1$:
\[
E^{ij}_1=\cH^{i+j}(Y,\gr^i\Omega^\bullet_{Y/S}(\log E_a)(\bH^!))
\Rightarrow \cH^{i+j}(Y,\Omega^\bullet_{Y/S}(\log E_a)(\bH^!)).
\]
\end{cor}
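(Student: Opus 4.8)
The plan is to deduce the characteristic-zero degeneration from its positive-characteristic counterpart, Corollary \ref{5.10}, by the spreading-out technique of Deligne and Illusie. The starting observation is that for a bounded spectral sequence $E_1^{ij}\Rightarrow\cH^{i+j}$ over a field $\kappa$, degeneration at $E_1$ is equivalent to the purely numerical identity $\dim_\kappa\cH^n=\sum_{i+j=n}\dim_\kappa E_1^{ij}$ for all $n$, since one always has $\dim_\kappa\cH^n=\sum_{i+j=n}\dim_\kappa E_\infty^{ij}\le\sum_{i+j=n}\dim_\kappa E_1^{ij}$, with equality precisely when every differential vanishes. Hence it suffices to prove this identity over $K$, and for this I will compare dimensions over $K$ with dimensions over a finite residue field.

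First I would spread the situation out. All the data $X,Y,f,D,E$ are of finite presentation over $K$, so there is a finitely generated $\Z$-subalgebra $R\subset K$, a smooth proper $X_R\to\Spec R$, a smooth proper $Y_R\to\Spec R$, and an $E_R$-semistable morphism $f_R:X_R\to Y_R$ with adapted divisor $D_R$, all recovering the original data after applying $-\otimes_R K$. After inverting finitely many elements of $R$ I may assume that $\Spec R$ is smooth over $\Spec\Z$ (generic smoothness, using $\ch K=0$), that $X_R$ and $Y_R$ are flat over $R$, and that the semistability and adapted-divisor conditions hold fibrewise. For a closed point $s\in\Spec R$ the residue field $k(s)=\F_q$ is finite, of some characteristic $p$; since $\dim(X_s/k(s))=\dim(X/S)$ is locally constant, I restrict to the dense set of closed points with $p>\dim(X/S)$.

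The key point is that each such reduction automatically satisfies the hypotheses of Corollary \ref{5.10}. Because $\Spec R$ is smooth over $\Spec\Z$, the $\F_q$-point $s$ lifts along the square-zero surjection $W_2(\F_q)\twoheadrightarrow\F_q$ to a point $\Spec W_2(\F_q)\to\Spec R$; base-changing $X_R,Y_R,f_R,D_R,E_R$ along it produces a lifting $\wt f:(\wt X,\wt D)\ra(\wt Y,\wt E_a)$ of $f_s:(X_s,D_s)\ra(Y_s,E_{as})$ over $\wt S=\Spec W_2(\F_q)$. Together with the properness of $f_s$ and of $Y_s\to\Spec k(s)$ and the bound $\dim(X_s/k(s))<p$, Corollary \ref{5.10} applies and yields, over $k(s)$, the numerical identity $\dim_{k(s)}\cH^n=\sum_{i+j=n}\dim_{k(s)}E_1^{ij}$.

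It then remains to transport this identity to the generic point. After further shrinking $\Spec R$, generic freeness together with cohomology and base change lets me assume that the relative $E_1$-terms $\cH^{i+j}(Y_R,\gr^i\Omega^\bullet_{Y_R/R}(\log E_{aR})(\bH^!_R))$ and the relative abutments $\cH^n(Y_R,\Omega^\bullet_{Y_R/R}(\log E_{aR})(\bH^!_R))$ are locally free $R$-modules whose formation commutes with base change to every residue field; their ranks $e_{ij}$ and $r_n$ are then independent of the point. Evaluating at a good closed point and invoking Corollary \ref{5.10} gives $r_n=\sum_{i+j=n}e_{ij}$, and evaluating at the generic point gives
\[
\dim_K\cH^n = r_n = \sum_{i+j=n}e_{ij} = \sum_{i+j=n}\dim_K E_1^{ij},
\]
which is exactly the $E_1$-degeneration over $K$. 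The main obstacle I anticipate is the bookkeeping in the spreading-out step, in particular verifying that the semistable structure, the adapted-divisor conditions, and above all the $W_2(\F_q)$-liftability required by Corollary \ref{5.10} really do survive reduction at almost all closed points; the liftability is precisely what the smoothness of $\Spec R$ over $\Spec\Z$ is arranged to guarantee.
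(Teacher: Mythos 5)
Your proposal is correct and is exactly the paper's argument: the paper proves Corollary \ref{5.11} by citing Corollary \ref{5.10} together with the standard reduction modulo $p$ technique of Deligne--Illusie, which is precisely the spreading-out, $W_2(\F_q)$-lifting via smoothness over $\Z$, and dimension-count comparison that you spell out.
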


\begin{proof}
It follows from Corollary \ref{5.10} and the standard argument using
the reduction modulo $p$ technique (see \cite[2.7]{di} and \cite{il96}).
\end{proof}

\section{Applications to vanishing theorems}\label{S6}

In this section, let $k$ be a perfect field of characteristic $p>0$,
and $W_2(k)$ the ring of Witt vectors of length two of $k$. There
are some applications of the main theorem to vanishing theorems.

\begin{thm}\label{6.1}
Let $S=\Spec k$, $\wt{S}=\Spec W_2(k)$, and $X,Y$ proper and smooth
$S$-schemes. Let $f:X\ra Y$ be an $E$-semistable $S$-morphism with
an adapted divisor $D$, and $\LL$ an ample invertible sheaf on $Y$.
Assume that $f:(X,D)\ra(Y,E_a)$ has a lifting $\wt{f}:(\wt{X},\wt{D})
\ra(\wt{Y},\wt{E}_a)$ over $\wt{S}$ and $\dim(X/S)<p$. Then we have
\begin{eqnarray}
\cH^{i+j}(Y,\LL\otimes\gr^i\Omega^\bullet_{Y/S}(\log E_a)(\bH^!))=0
\,\,\hbox{for any}\,\,i+j>\dim(Y/S), \label{f6.1} \\
\cH^{i+j}(Y,\LL^{-1}\otimes\gr^i\Omega^\bullet_{Y/S}(\log E_a)(\bH^!))=0
\,\,\hbox{for any}\,\,i+j<\dim(Y/S). \label{f6.2}
\end{eqnarray}
\end{thm}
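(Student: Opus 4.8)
The plan is to adapt the Deligne--Illusie method (\cite{di}, \cite{il90}) to the present coefficient system, using the decomposition (\ref{f5.46}) of Theorem \ref{5.9} as the only geometric input. Write $n=\dim(Y/S)$, let $\LL_1$ be the pullback of $\LL$ to $Y_1$, so that $F_{Y/S}^*\LL_1\cong\LL^{\otimes p}$, and recall that $F_{Y/S}$ is finite while $(Y_1,\LL_1,\bH^!_1)\cong(Y,\LL,\bH^!)$ under the base change $F_S$ (an isomorphism since $k$ is perfect). Tensoring (\ref{f5.46}) by $\LL_1^{\otimes m}$ and applying the projection formula gives, for every integer $m$, an isomorphism $\bigoplus_i\cH^d\big(Y,\LL^{\otimes m}\otimes\gr^i\Omega^\bullet_{Y/S}(\log E_a)(\bH^!)\big)\cong\cH^d\big(Y,\LL^{\otimes pm}\otimes\Omega^\bullet_{Y/S}(\log E_a)(\bH^!)\big)$, where $\LL^{\otimes pm}=F_{Y/S}^*\LL_1^{\otimes m}$ carries its canonical Frobenius--descent connection so that the right-hand complex is genuine. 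Since a direct sum vanishes iff each summand does, this single identity is the engine of the proof; note that (\ref{f6.2}) is the case $m=-1$ of the desired vanishing of the left-hand side in degrees $d<n$. The companion statement (\ref{f6.1}) will be deduced by duality at the end.

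The heart is a descending induction on the Frobenius exponent. For $e\ge 0$ let $K(e)$ denote the assertion that $\cH^d(Y,\LL^{-p^e}\otimes\gr^i\Omega^\bullet_{Y/S}(\log E_a)(\bH^!))=0$ for all $i$ and all $d<n$, so $K(0)$ is (\ref{f6.2}). For the base case I would take $e$ large: the stupid filtration spectral sequence of the Kodaira--Spencer complex (\ref{f5.21}) reduces $K(e)$ to the vanishing of $H^{d-\ell}(Y,\LL^{-p^e}\otimes\Omega^\ell_{Y/S}(\log E_a)\otimes\gr^{i-\ell}\bH^!)$ for $d<n$, and this holds once $p^e$ is large by Serre duality together with Serre's vanishing theorem, uniformly over the finitely many locally free sheaves involved --- here the local freeness of the $R^jf_*$, hence of the $\gr^k\bH^!$, comes from Corollaries \ref{2.4} and \ref{4.7}. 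For the inductive step $K(e)\Rightarrow K(e-1)$ with $e\ge 1$, the Hodge filtration (\ref{f5.17}) of the full complex, whose graded pieces are precisely the Kodaira--Spencer complexes, yields (after the flat twist by $\LL^{-p^e}$) a spectral sequence showing that $K(e)$ forces $\cH^d(Y,\LL^{-p^e}\otimes\Omega^\bullet_{Y/S}(\log E_a)(\bH^!))=0$ for $d<n$; substituting this into the isomorphism of the previous paragraph with $m=-p^{e-1}$ and reading off the summands gives exactly $K(e-1)$. Descending from $e\gg 0$ to $e=0$ then proves (\ref{f6.2}).

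Finally I would obtain (\ref{f6.1}) from (\ref{f6.2}) by Serre duality on the smooth proper $Y$: for the bounded complex of locally free sheaves $\gr^i\Omega^\bullet_{Y/S}(\log E_a)(\bH^!)$ one has $\cH^d(Y,\LL\otimes\,\cdot\,)^\vee\cong\cH^{n-d}(Y,\LL^{-1}\otimes(\,\cdot\,)^\vee\otimes\omega_{Y/S})$, so it suffices to identify the dual complex $(\gr^i\Omega^\bullet_{Y/S}(\log E_a)(\bH))^\vee\otimes\omega_{Y/S}$ with the Kodaira--Spencer complex $\gr^{n-i}\Omega^\bullet_{Y/S}(\log E_a)(\bH^\dag)$ up to shift; this interchanges the ranges $d>n$ and $d<n$ and the two variants $!=$ null and $\dag$, which is exactly why both are carried along. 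The main obstacle I anticipate is precisely this duality: one must combine the self--duality $\Omega^\ell_{Y/S}(\log E_a)^\vee\otimes\omega_{Y/S}\cong\Omega^{n-\ell}_{Y/S}(\log E_a)(-E_a)$ with relative Serre duality for the proper morphism $f$ to match $(\gr^k\bH)^\vee$ with the graded pieces of $\bH^\dag$, and then check that this identification is compatible with the Kodaira--Spencer differentials --- this is the role played by the ``variant with support'' $\bH^\dag$ of Section \ref{S4}, and it must be verified along the lines of the duality in \cite{il90}. A secondary point requiring care is that the flat twist $\LL^{-p^e}=F_{Y/S}^*\LL_1^{-p^{e-1}}$ genuinely preserves both the Hodge filtration (\ref{f5.17}) and the decomposition (\ref{f5.46}), which is what legitimizes interchanging the twist with the two spectral sequences above.
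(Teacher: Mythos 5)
Your treatment of (\ref{f6.2}) is essentially the paper's own argument in different clothing: the paper sets $h^{ij}(\MM)=\dim\cH^{i+j}(Y,\MM\otimes\gr^i\Omega^\bullet_{Y/S}(\log E_a)(\bH^!))$, derives the inequality $\sum_{i+j=n}h^{ij}(\MM)\le\sum_{i+j=n}h^{ij}(\MM^p)$ from the decomposition (\ref{f5.46}) combined with the Hodge spectral sequence, and kills $h^{ij}(\LL^{-p^N})$ for $N\gg 0$ and $i+j<\dim(Y/S)$ by Serre vanishing applied through the stupid filtration of the Kodaira--Spencer complex; your descending induction on the Frobenius exponent is the same mechanism, and that half of your proposal is sound.

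The gap is in your deduction of (\ref{f6.1}) from (\ref{f6.2}) by Serre duality. Write $e=\dim(Y/S)$ and $d=\dim(X/Y)$. The identification you need, namely that $(\gr^i\Omega^\bullet_{Y/S}(\log E_a)(\bH))^\vee\otimes\omega_{Y/S}$ is a Kodaira--Spencer complex for $\bH^\dag$ up to shift, is false as soon as $D_h\ne 0$. Combining $\Omega^j_{Y/S}(\log E_a)^\vee\otimes\omega_{Y/S}\cong\Omega^{e-j}_{Y/S}(\log E_a)\otimes\OO_Y(-E_a)$, relative duality $(R^lf_*\FF)^\vee\cong R^{d-l}f_*(\FF^\vee\otimes\omega_{X/Y})$, the perfect pairing into $\Omega^d_{X/Y}(\log D/E_a)=\omega_{X/Y}(D_h)$ from Remark \ref{1.6}(2), and the projection formula with $f^*\OO_Y(-E_a)=\OO_X(-D_v)$, one finds that the Serre dual of $\gr^{i-j}\bH$ is built from $\Omega^{d-i+j}_{X/Y}(\log D/E_a)\otimes\OO_X(-D_h-D_v)=\Omega^{d-i+j}_{X/Y}(\log D/E_a)\otimes\OO_X(-D)$, i.e.\ the full twist by $-D$, whereas $\bH^\dag$ is defined with the twist by $-D_v$ only. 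In Illusie's situation \cite{il90} there is no horizontal part, so $D=D_v$ and the two variants are genuinely dual; here the extra $-D_h$ is precisely the new feature of the paper and it breaks the duality, so the two variants carried along are not dual to one another (their duals would be a third pair of complexes not considered in the paper). The paper sidesteps duality entirely: the amplification inequality holds for an arbitrary invertible $\MM$, and $h^{ij}(\LL^{p^N})=0$ for $i+j>\dim(Y/S)$ and $N\gg 0$ follows from Serre vanishing for the terms of the stupid filtration with $s>0$, together with the trivial vanishing $\Omega^r_{Y/S}(\log E_a)=0$ for $r>\dim(Y/S)$ when $s=0$. You should prove (\ref{f6.1}) by that direct route rather than by dualizing (\ref{f6.2}).
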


\begin{proof}
We use an analogous argument to those of \cite[2.8]{di} and
\cite[Corollary 4.16]{il90}. Let $\MM$ be an invertible sheaf on $Y$.
Define
\[
h^{ij}(\MM)=\dim\cH^{i+j}(Y,\MM\otimes\gr^i\Omega^\bullet_{Y/S}
(\log E_a)(\bH^!)).
\]
Then for all $n$, we have
\begin{equation}
\sum_{i+j=n}h^{ij}(\MM)\leq\sum_{i+j=n}h^{ij}(\MM^p). \label{f6.3}
\end{equation}
Indeed, denote by $\MM_1$ the inverse image of $\MM$ on $Y_1$, then we have
$\MM^p=F_{Y/S}^*\MM_1$. The Hodge spectral sequence
\begin{eqnarray*}
E^{ij}_1 &=& \cH^{i+j}(Y_1,\MM_1\otimes F_{Y/S*}\gr^i\Omega^\bullet_{Y/S}
(\log E_a)(\bH^!)) \\
&\Rightarrow& \cH^{i+j}(Y_1,\MM_1\otimes F_{Y/S*}\Omega
^\bullet_{Y/S}(\log E_a)(\bH^!))
\end{eqnarray*}
gives rise to the inequality
\[
\dim \cH^n(Y_1,\MM_1\otimes F_{Y/S*}\Omega^\bullet_{Y/S}(\log E_a)(\bH^!))
\leq\sum_{i+j=n}h^{ij}(\MM^p).
\]
On the other hand, by the decomposition (\ref{f5.46}), we have
\begin{eqnarray*}
&& \dim \cH^n(Y_1,\MM_1\otimes F_{Y/S*}\Omega^\bullet_{Y/S}(\log E_a)
(\bH^!)) \\
&=& \sum_{i+j=n} \dim \cH^{i+j}(Y_1,\MM_1\otimes\gr^i\Omega^\bullet
_{Y_1/S}(\log E_{a1})(\bH^!_1))=\sum_{i+j=n}h^{ij}(\MM),
\end{eqnarray*}
which proves (\ref{f6.3}).

Next, we shall prove $h^{ij}(\LL^{p^N})=0$ for $N$ sufficiently large and
for all $i+j>\dim(Y/S)$. The stupid filtration of the Kodaira-Spencer complex
\[
\gr^i\Omega^\bullet_{Y/S}(\log E_a)(\bH^!)=(\gr^i\bH^!\stackrel{\nabla}{\ra}
\Omega^1_{Y/S}(\log E_a)\otimes\gr^{i-1}\bH^!\stackrel{\nabla}{\ra}\cdots)
\]
gives rise to the following spectral sequence
\begin{eqnarray*}
E^{rs}_1=H^s(Y,\LL^{p^N}\otimes\Omega^r_{Y/S}(\log E_a)\otimes\gr^{i-r}\bH^!)
\Rightarrow\cH^{r+s}(Y,\LL^{p^N}\otimes\gr^i\Omega^\bullet_{Y/S}(\log E_a)(\bH^!)).
\end{eqnarray*}
We focus on those terms with $r+s=i+j>\dim(Y/S)$.
If $s=0$ then $\Omega^r_{Y/S}(\log E_a)=0$.
If $s>0$ then the choices of $r$ and $s$ are finite. By the Serre vanishing theorem,
we can choose $N$ sufficiently large such that $E^{rs}_1=0$ for all $r$ and $s$,
hence $h^{ij}(\LL^{p^N})=0$ holds for all $i+j>\dim(Y/S)$. Thanks to
(\ref{f6.3}), we obtain the vanishing (\ref{f6.1}).

By a similar argument, we can prove $h^{ij}(\LL^{-p^N})=0$ for $N$
sufficiently large and for all $i+j<\dim(Y/S)$. Thanks to (\ref{f6.3}),
we obtain the vanishing (\ref{f6.2}).
\end{proof}

\begin{cor}\label{6.2}
Let $K$ be a field of characteristic zero, $S=\Spec K$, and $X,Y$ proper
and smooth $S$-schemes. Let $f:X\ra Y$ be an $E$-semistable $S$-morphism
with an adapted divisor $D$, and $\LL$ an ample invertible sheaf on $Y$.
Then we have
\begin{eqnarray*}
\cH^{i+j}(Y,\LL\otimes\gr^i\Omega^\bullet_{Y/S}(\log E_a)(\bH^!))=0
\,\,\hbox{for any}\,\,i+j>\dim(Y/S), \\
\cH^{i+j}(Y,\LL^{-1}\otimes\gr^i\Omega^\bullet_{Y/S}(\log E_a)(\bH^!))=0
\,\,\hbox{for any}\,\,i+j<\dim(Y/S).
\end{eqnarray*}
\end{cor}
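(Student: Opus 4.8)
The plan is to deduce Corollary \ref{6.2} from its positive-characteristic counterpart Theorem \ref{6.1} by the standard reduction modulo $p$ technique of Deligne--Illusie, exactly as in the proof of Corollary \ref{5.11} (see \cite[2.7]{di} and \cite{il96}). Since all the objects $X,Y,f,D,E,\LL$ are of finite presentation, they are already defined over a subfield of $K$ that is finitely generated over $\Q$, and cohomology commutes with the flat field extension to $K$; so I may assume $K$ is finitely generated over $\Q$. First I would \emph{spread out}: there is a finitely generated $\Z$-subalgebra $A\subset K$ carrying proper morphisms $X_A,Y_A$ over $\Spec A$, a morphism $f_A:X_A\ra Y_A$, divisors $D_A,E_A$, and an invertible sheaf $\LL_A$, whose base change along $\Spec K\ra\Spec A$ recovers the original data. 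After inverting finitely many elements of $A$ (shrinking $\Spec A$) I may arrange simultaneously that $\Spec A$ is smooth over $\Spec\Z$, that $X_A,Y_A$ are proper and smooth over $A$, that $f_A$ is $E_A$-semistable with adapted divisor $D_A$ in the sense of Definition \ref{1.2}, that $\LL_A$ is ample, and, by generic flatness together with Corollaries \ref{4.7}, \ref{4.8} and the degeneracy results of \S\ref{S5}, that all the coherent sheaves occurring in the Kodaira--Spencer complexes $\gr^i\Omega^\bullet_{Y_A/A}(\log E_{a,A})(\bH^!_A)$ and their twists by $\LL_A^{\pm1}$ are flat over $A$ with formation compatible with base change.

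Next I would reduce at a suitable closed point. Because $A$ is finitely generated over $\Z$ with $\Spec A\ra\Spec\Z$ dominant, $\Spec A$ has closed points of arbitrarily large residue characteristic; choose a closed point $s$ with residue field $k=A/\mathfrak{m}$ finite of characteristic $p>\dim(X/S)$, noting that $\dim(X_s/k)=\dim(X/S)$ since relative dimension is stable under base change. The crucial point is that liftability is then automatic: $W_2(k)\ra k$ is a square-zero thickening (indeed $p^2=0$ in $W_2(k)$), and since $A$ is smooth, hence formally smooth, over $\Z$, the homomorphism $A\ra k$ lifts to a $\Z$-algebra homomorphism $A\ra W_2(k)$. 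Setting $\wt S=\Spec W_2(k)$, a lifting of $S=\Spec k$ over $\Z/p^2\Z$, and base changing the \emph{flat} family $X_A,Y_A,f_A,D_A,E_A$ along $\wt S\ra\Spec A$ produces for free a lifting $\wt f:(\wt X,\wt D)\ra(\wt Y,\wt E_a)$ over $\wt S$ of the reduction $f_s:(X_s,D_s)\ra(Y_s,E_{a,s})$ over $k$. Thus all hypotheses of Theorem \ref{6.1} hold at $s$.

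Finally I would apply Theorem \ref{6.1} to $f_s$ with the ample sheaf $\LL_s$, obtaining
\[
\cH^{i+j}(Y_s,\LL_s\otimes\gr^i\Omega^\bullet_{Y_s/k}(\log E_{a,s})(\bH^!_s))=0
\quad\hbox{for}\quad i+j>\dim(Y/S),
\]
and the analogous vanishing for $\LL_s^{-1}$ when $i+j<\dim(Y/S)$. By the base-change compatibility arranged above, the Kodaira--Spencer complex and its twists at $s$ are the reductions of those over $A$, so I would conclude by upper semicontinuity of the fiber dimensions of hypercohomology for the proper flat family $Y_A\ra\Spec A$ with coefficients in the bounded complex of flat coherent sheaves $\LL_A^{\pm1}\otimes\gr^i\Omega^\bullet_{Y_A/A}(\log E_{a,A})(\bH^!_A)$: the dimension at the generic point is bounded above by its value at the specialization $s$, which is $0$. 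Hence the corresponding hypercohomology over $K$ vanishes.

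I expect the main obstacle to be the bookkeeping of Paragraph one, namely arranging in a \emph{single} shrinking of $\Spec A$ that the formation of the Gauss--Manin system $\bH^!$ and of the associated graded (Kodaira--Spencer) complex commutes with base change, so that the reduction at $s$ of the characteristic-zero complex genuinely coincides with the characteristic-$p$ complex appearing in Theorem \ref{6.1}. This hinges on the $E_1$-degeneracies of Corollaries \ref{2.4} and \ref{4.7} persisting in the special fibre and being compatible with specialization; once this is secured, liftability is free from the smoothness of $A$ over $\Z$, and the conceptual core of the vanishing is entirely supplied by Theorem \ref{6.1}.
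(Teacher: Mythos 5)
Your proposal is correct and follows exactly the route the paper takes: the paper's proof of Corollary \ref{6.2} is simply the citation ``It follows from Theorem \ref{6.1} and the reduction modulo $p$ technique,'' and your argument is a faithful, correctly executed expansion of that technique (spreading out, shrinking to secure flatness and base-change compatibility, lifting via formal smoothness of $A$ over $\Z$, applying Theorem \ref{6.1} at a closed point of large residue characteristic, and concluding by semicontinuity). No gaps beyond the bookkeeping you already flag, which is handled by the standard argument of \cite[2.7]{di} and \cite{il96}.
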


\begin{proof}
It follows from Theorem \ref{6.1} and the reduction modulo $p$
technique.
\end{proof}

\begin{thm}\label{6.3}
Let $S=\Spec k$, $\wt{S}=\Spec W_2(k)$ and $X$, $Y$ proper and smooth
$S$-schemes. Let $f:X\ra Y$ be an $E$-semistable $S$-morphism with an
adapted divisor $D$, and $\LL$ an ample invertible sheaf on $Y$.
Assume that $f:(X,D)\ra(Y,E_a)$ has a lifting $\wt{f}:(\wt{X},\wt{D})
\ra(\wt{Y},\wt{E}_a)$ over $\wt{S}$ and $\dim(X/S)<p$. Then we have
\[
H^i(Y,\LL\otimes R^jf_*\omega_{X/S}(D))=0\,\,\hbox{and}\,\,
H^i(Y,\LL\otimes R^jf_*\omega_{X/S}(D_h))=0
\]
for any $i>0$ and $j\geq 0$.
\end{thm}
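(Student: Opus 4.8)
The plan is to deduce both vanishings simultaneously from Theorem~\ref{6.1}, applied to the single graded piece $\gr^i$ with $i=n:=\dim(X/S)$ of the Kodaira-Spencer complex. Write $d=\dim(X/Y)$ and $e=\dim(Y/S)$, so that $n=d+e<p$. The first observation is that the complex $\gr^n\Omega^\bullet_{Y/S}(\log E_a)(\bH^!)$ is concentrated in the single degree $e$: its term in degree $m$ is $\Omega^m_{Y/S}(\log E_a)\otimes\gr^{n-m}\bH^!$, and since $\Omega^m_{Y/S}(\log E_a)=0$ for $m>e$ while $\gr^{n-m}\bH^!=0$ for $n-m>d$ (that is, for $m<e$), only $m=e$ survives, leaving the single term $\omega_{Y/S}(E_a)\otimes\gr^d\bH^!$.

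Next I would identify that term with the relative dualizing sheaves. By Remark~\ref{1.6}(2) one has $\Omega^d_{X/Y}(\log D/E_a)=\omega_{X/Y}(D_h)$ together with the adjunction isomorphism $f^*\omega_{Y/S}(E_a)\otimes\omega_{X/Y}(D_h)\cong\omega_{X/S}(D)$; tensoring the latter by $\OO_X(-D_v)$ and using $D=D_v+D_h$ yields at once the variant $f^*\omega_{Y/S}(E_a)\otimes\Omega^d_{X/Y}(D_v,D_h)\cong\omega_{X/S}(D_h)$. Under Assumption~\ref{5.4} the Hodge spectral sequences (\ref{f5.14}) and (\ref{f5.15}) degenerate, so (after reindexing the cohomological degree) the degeneration identifies $\gr^d\bH$ with $\oplus_j R^jf_*\omega_{X/Y}(D_h)$ and $\gr^d\bH^\dag$ with $\oplus_j R^jf_*\Omega^d_{X/Y}(D_v,D_h)$. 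Applying the projection formula, legitimate since $\omega_{Y/S}(E_a)$ is invertible, then gives
\[ \omega_{Y/S}(E_a)\otimes\gr^d\bH=\bigoplus_j R^jf_*\omega_{X/S}(D), \quad \omega_{Y/S}(E_a)\otimes\gr^d\bH^\dag=\bigoplus_j R^jf_*\omega_{X/S}(D_h). \]

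Because $\gr^n\Omega^\bullet_{Y/S}(\log E_a)(\bH^!)$ sits in degree $e$, its hypercohomology collapses to ordinary cohomology, namely $\cH^{e+q}(Y,\LL\otimes\gr^n\Omega^\bullet_{Y/S}(\log E_a)(\bH^!))=H^q(Y,\LL\otimes\omega_{Y/S}(E_a)\otimes\gr^d\bH^!)$. Taking $i=n$ in (\ref{f6.1}), the hypothesis $i+j>\dim(Y/S)$ reads $q>0$, so Theorem~\ref{6.1} forces the right-hand side to vanish for every $q>0$. Since cohomology commutes with the finite direct sum over $j$, each summand vanishes, which is precisely $H^q(Y,\LL\otimes R^jf_*\omega_{X/S}(D))=0$ (from $\bH$) and $H^q(Y,\LL\otimes R^jf_*\omega_{X/S}(D_h))=0$ (from $\bH^\dag$) for all $q>0$ and $j\geq 0$.

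The only content beyond bookkeeping is the reduction to the top graded piece and its concentration in a single degree, which is exactly what turns the abstract hypercohomological vanishing of Theorem~\ref{6.1} into the concrete Koll\'ar-type statement. Once that is in place there is no genuine obstacle: the variant isomorphism for $\Omega^d_{X/Y}(D_v,D_h)$ follows from Remark~\ref{1.6}(2) merely by twisting, and the twist by $\OO_X(-D_v)=f^*\OO_Y(-E_a)$ automatically converts the $\omega_{X/S}(D)$ of the $\bH$-case into the $\omega_{X/S}(D_h)$ of the $\bH^\dag$-case, so the two assertions are handled in one stroke.
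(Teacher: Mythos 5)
Your proposal is correct and follows essentially the same route as the paper's own proof: reduce to the top graded piece $\gr^n\Omega^\bullet_{Y/S}(\log E_a)(\bH^!)$, observe it is concentrated in degree $e$ with single term $\omega_{Y/S}(E_a)\otimes\gr^d\bH^!\cong\oplus_j R^jf_*\omega_{X/S}(D)$ (resp.\ $\omega_{X/S}(D_h)$ in the $\dag$ case), and apply the vanishing (\ref{f6.1}) of Theorem \ref{6.1} with $i=n$. The only difference is that you spell out the projection-formula and Hodge-degeneration steps that the paper leaves implicit.
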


\begin{proof}
Suppose $\dim(X/S)=n$, $\dim(X/Y)=d$ and $\dim(Y/S)=e$.

(1) Consider $\gr^n\Omega^\bullet_{Y/S}(\log E_a)(\bH)$, whose $k$-th
component is
\[
\Omega^k_{Y/S}(\log E_a)\otimes\gr^{n-k}\bH=\Omega^k_{Y/S}(\log E_a)\otimes
(\oplus_l R^{l-n+k}f_*\Omega^{n-k}_{X/Y}(\log D/E_a)).
\]
Since $\Omega^k_{Y/S}(\log E_a)=0$ for any $k>e$ and $\Omega^{n-k}_{X/Y}
(\log D/E_a)=0$ for any $k<e$, we have
\begin{eqnarray*}
\gr^n\Omega^\bullet_{Y/S}(\log E_a)(\bH) &=& \omega_{Y/S}(E_a)\otimes
(\oplus_l R^{l-d}f_*\omega_{X/Y}(D_h))[-e] \\
&=& \oplus_{k\geq 0} R^kf_*\omega_{X/S}(D)[-e].
\end{eqnarray*}
In Theorem \ref{6.1}, taking $i=n$ and $r=i+j-e>0$, we have
\begin{eqnarray*}
\bigoplus_{k\geq 0}H^r(Y,\LL\otimes R^kf_*\omega_{X/S}(D))=0,\,\,
\hbox{i.e.} \\
H^r(Y,\LL\otimes R^kf_*\omega_{X/S}(D))=0\,\,\hbox{for any}\,\,r>0\,\,
\hbox{and}\,\,k\geq 0.
\end{eqnarray*}

(2)  Consider $\gr^n\Omega^\bullet_{Y/S}(\log E_a)(\bH^\dag)$, whose $k$-th
component is
\[
\Omega^k_{Y/S}(\log E_a)\otimes\gr^{n-k}\bH^\dag=\Omega^k_{Y/S}(\log E_a)
\otimes(\oplus_l R^{l-n+k}f_*\Omega^{n-k}_{X/Y}(D_v,D_h)).
\]
Since $\Omega^k_{Y/S}(\log E_a)=0$ for any $k>e$ and $\Omega^{n-k}_{X/Y}
(D_v,D_h)=0$ for any $k<e$, we have
\begin{eqnarray*}
\gr^n\Omega^\bullet_{Y/S}(\log E_a)(\bH^\dag) &=& \omega_{Y/S}(E_a)
\otimes(\oplus_l R^{l-d}f_*\omega_{X/Y}(D_h-D_v))[-e] \\
&=& \oplus_{k\geq 0} R^kf_*\omega_{X/S}(D_h)[-e].
\end{eqnarray*}
In Theorem \ref{6.1}, taking $i=n$ and $r=i+j-e>0$, we have
\begin{eqnarray}
\bigoplus_{k\geq 0}H^r(Y,\LL\otimes R^kf_*\omega_{X/S}(D_h))=0,\,\,\hbox{i.e.}
\nonumber \\
H^r(Y,\LL\otimes R^kf_*\omega_{X/S}(D_h))=0\,\,\hbox{for any}\,\,r>0\,\,
\hbox{and}\,\,k\geq 0. \label{f6.4}
\end{eqnarray}
\end{proof}

In order to give further applications, we need the following definition
\cite[Definition 2.3]{xie10}.

\begin{defn}\label{6.4}
Let $X$ be a smooth scheme over $k$. $X$ is said to be strongly
liftable over $W_2(k)$, if the following two conditions hold:

(i) $X$ is liftable over $W_2(k)$; and

(ii) there is a lifting $\wt{X}$ of $X$, such that for any prime
divisor $D$ on $X$, $(X,D)$ has a lifting $(\wt{X},\wt{D})$ over
$W_2(k)$, where $\wt{X}$ is fixed for all $D$.
\end{defn}

It was proved in \cite{xie10} and \cite{xie} that $\A^n_k$, $\PP^n_k$,
smooth projective curves, smooth projective rational surfaces, certain
smooth complete intersections in $\PP^n_k$, and smooth toric varieties
are strongly liftable over $W_2(k)$. As a consequence of Theorem
\ref{6.3}, we can obtain some vanishing results for certain strongly
liftable varieties.

\begin{cor}\label{6.5}
Let $X=X(\Delta,k)$ be a smooth projective toric variety associated to a fan
$\Delta$ with $\ch(k)=p>\dim X$, $Y$ a smooth projective variety over $k$,
$f:X\ra Y$ an $E$-semistable morphism with an adapted divisor $D$, and $\LL$
an ample invertible sheaf on $Y$. Then $f:(X,D)\ra (Y,E_a)$ has a lifting
$\wt{f}:(\wt{X},\wt{D})\ra (\wt{Y},\wt{E}_a)$ over $W_2(k)$. Consequently,
we have
\[
H^i(Y,\LL\otimes R^jf_*\omega_X(D))=0 \,\,\hbox{and}\,\,
H^i(Y,\LL\otimes R^jf_*\omega_X(D_h))=0
\]
for any $i>0$ and $j\geq 0$.
\end{cor}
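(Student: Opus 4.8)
The plan is to obtain the two vanishing statements as immediate consequences of Theorem \ref{6.3}, so that the real content of the proof lies entirely in producing the lifting $\wt{f}:(\wt{X},\wt{D})\ra(\wt{Y},\wt{E}_a)$ over $W_2(k)$. I would first record the numerical input: since $\ch(k)=p>\dim X$ and $S=\Spec k$ gives $\dim(X/S)=\dim X$, the hypothesis $\dim(X/S)<p$ of Theorem \ref{6.3} holds, and $\LL$ is ample by assumption. Granting the lifting, Theorem \ref{6.3} (applied with $\wt{S}=\Spec W_2(k)$ and $\omega_{X/S}=\omega_X$) then yields $H^i(Y,\LL\otimes R^jf_*\omega_X(D))=0$ and $H^i(Y,\LL\otimes R^jf_*\omega_X(D_h))=0$ for all $i>0$ and $j\geq 0$ word for word.

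To lift the source pair $(X,D)$, I would invoke the strong liftability of the smooth projective toric variety $X$ proved in \cite{xie10} and \cite{xie}. By Definition \ref{6.4} this furnishes a single lifting $\wt{X}$ of $X$ over $W_2(k)$ (concretely the toric scheme attached to the fan $\Delta$ over $W_2(k)$) over which every prime divisor on $X$ lifts. As the adapted divisor $D=E_X+D_a+D_h$ is RSNC, I would lift each of its prime components over this fixed $\wt{X}$ and take $\wt{D}$ to be their sum; flatness over $W_2(k)$ together with the prescribed reduction modulo $p$ preserves the normal crossing conditions of Definition \ref{1.2}, so $(\wt{X},\wt{D})$ is as required.

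The main obstacle is the remaining half: lifting $(Y,E_a)$ and the morphism $f$ compatibly with $(\wt{X},\wt{D})$, because $Y$ is only assumed smooth projective and a general smooth projective variety need not be liftable at all. Here the toric structure of $X$ must be exploited, and my approach would be to use that an $E$-semistable morphism is toroidal: by Definition \ref{1.1} it is, \'etale-locally, a product of the standard pieces $pr_1:\A^n_S\ra\A^1_S$ and $h:\A^n_S\ra\A^1_S$ with $h^*y=x_1\cdots x_n$, each of which lifts canonically over $W_2(k)$ by repeating the same monomial formulas over $\A^n_{\wt{S}}$ and carrying the coordinate divisors to their evident lifts. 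The canonical toric lifting $\wt{X}$ already pins down a coherent system of such local lifts on the torus-invariant charts of $X$; the target then acquires \'etale-locally the structure of a smooth toroidal pair $(Y,E_a)$ whose charts and transition data are monomial, and I would show that these local lifts of $Y$, of $\wt{E}_a$ and of $f$ glue. I expect this globalization --- verifying that the preferred local liftings of the morphism satisfy the cocycle condition so as to assemble into a global $\wt{f}:\wt{X}\ra\wt{Y}$ --- to be the technical heart of the argument, and the place where strong liftability of $X$, rather than mere liftability, is indispensable; once $\wt{f}$ is in hand, checking that it is $\wt{E}$-semistable with adapted divisor $\wt{D}$ is a local computation on the standard pieces.
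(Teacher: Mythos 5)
Your reduction to Theorem \ref{6.3} and your use of strong liftability to lift the pair $(X,D)$ both match the paper. The gap is in your third paragraph: you never actually produce the lifting of $Y$, of $E_a$ and of $f$, and the route you sketch is unlikely to close. The $E$-semistability of $f$ gives \'etale-local product descriptions of the \emph{morphism}, but it gives no control over the transition data of $Y$ itself: $Y$ is an arbitrary smooth projective variety, its gluing maps between charts are in no sense monomial, and the obstruction to gluing local $W_2(k)$-liftings of $Y$ lives in $H^2(Y,T_{Y})$ and need not vanish. Asserting that the preferred local lifts "glue" and deferring the cocycle verification as "the technical heart" leaves the essential claim of the corollary --- the existence of the lifting --- unproved; strong liftability of $X$ by itself gives you no handle on this descent problem on $Y$.

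The paper avoids local gluing on $Y$ altogether. It takes a general very ample divisor $H$ on $Y$ and sets $F=f^{-1}(H)$; strong liftability of the toric variety $X$ lifts $D+F$ to $\wt{D}+\wt{F}$ on $\wt{X}=X(\Delta,W_2(k))$. Replacing $F$ by a linearly equivalent torus-invariant divisor $G$, one checks that $H^0(\wt{X},\wt{G})\ra H^0(X,G)$ is surjective, and base point freeness of $|\wt{F}|$ follows from that of $|F|$ because the associated piecewise linear function satisfies $\psi_{\wt{G}}=\psi_G$ and is upper convex. The morphism defined by the lifted base-point-free linear system $|\wt{F}|$ then produces $\wt{Y}$ and $\wt{f}:\wt{X}\ra\wt{Y}$ simultaneously, after which one verifies directly that this is a lifting of $f:(X,D)\ra(Y,E_a)$ with $\wt{D}$ adapted (using the RSNC criteria of \cite{ev}). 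This global linear-system construction is the idea missing from your proposal.
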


\begin{proof}
Let $H$ be a general very ample effective divisor on $Y$ and $F=f^{-1}(H)$
the divisor on $X$. By \cite[Theorem 3.1]{xie}, $X$ is strongly liftable,
hence there are a lifting $\wt{X}=X(\Delta,W_2(k))$ of $X=X(\Delta,k)$
and a lifting $\wt{D}+\wt{F}\subset\wt{X}$ of $D+F\subset X$ over $W_2(k)$.
More precisely, let $G$ be a torus invariant divisor on $X$ determined by
the data $\{u(\sigma)\}\in\varprojlim M/M(\sigma)$ such that $G$ is linearly
equivalent to $F$. Then we can construct a torus invariant divisor $\wt{G}$
on $\wt{X}$ determined by the same data $\{u(\sigma)\}\in\varprojlim
M/M(\sigma)$ and prove that the natural map $H^0(\wt{X},\wt{G})\ra H^0(X,G)$
is surjective. Thus we can take a lifting $\wt{F}$ of $F$ such that $\wt{F}$
is linearly equivalent to $\wt{G}$.

By definition, the linear system $|F|$ is base point free, hence so is $|G|$.
By \cite[Page 68, Proposition]{fu}, the continuous piecewise linear function
$\psi_G$ on $|\Delta|$ defined in \cite[Page 66]{fu} is upper convex. Since
the functions $\psi_{\wt{G}}$ and $\psi_G$ are the same, $\psi_{\wt{G}}$ is
also upper convex. Thus the linear system $|\wt{G}|$ is base point free,
hence so is $|\wt{F}|$. Thus the linear system $|\wt{F}|$ defines
a $W_2(k)$-morphism $\wt{f}:\wt{X}\ra \wt{Y}$.

It is easy to verify that $\wt{Y}$ is a lifting of $Y$ and $\wt{f}$ is a
lifting of $f$ over $W_2(k)$. By \cite[Lemmas 8.13, 8.14]{ev} or
\cite[Lemma 2.2]{xie}, $\wt{D}$ is relatively simple normal crossing over
$W_2(k)$. Hence we can verify that $\wt{f}:\wt{X}\ra \wt{Y}$ is an
$\wt{E}$-semistable morphism and $\wt{D}$ is adapted to $\wt{f}$,
which imply that $\wt{f}:(\wt{X},\wt{D})\ra (\wt{Y},\wt{E}_a)$
is a lifting of $f:(X,D)\ra (Y,E_a)$ over $W_2(k)$.
By Theorem \ref{6.3}, we obtain the required vanishings.
\end{proof}

\begin{cor}\label{6.6}
Let $X$ be a smooth projective rational surface over $k$ with $\ch(k)=p>3$,
$f:X\ra\PP^1_k$ an $E$-semistable morphism with an adapted divisor $D$,
and $\LL$ an ample invertible sheaf on $\PP^1_k$. Then $f:(X,D)\ra (Y,E_a)$
has a lifting $\wt{f}:(\wt{X},\wt{D})\ra (\wt{Y},\wt{E}_a)$ over $W_2(k)$.
Consequently, we have
\[
H^i(\PP^1_k,\LL\otimes R^jf_*\omega_X(D))=0 \,\,\hbox{and}\,\,
H^i(\PP^1_k,\LL\otimes R^jf_*\omega_X(D_h))=0
\]
for any $i>0$ and $j\geq 0$.
\end{cor}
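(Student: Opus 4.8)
The plan is to reduce everything to Theorem \ref{6.3}. Indeed, the two asserted vanishings are exactly the conclusions of that theorem for $S=\Spec k$, $\wt S=\Spec W_2(k)$, $Y=\PP^1_k$ and $\omega_{X/S}=\omega_X$, and the hypothesis $p>3$ guarantees $\dim(X/S)=2<p$. So the only thing I must produce is a lifting $\wt f:(\wt X,\wt D)\ra(\wt Y,\wt E_a)$ of $f:(X,D)\ra(Y,E_a)$ over $W_2(k)$. I would construct it along the lines of Corollary \ref{6.5}, but replacing the toric base-point-freeness argument by the special features of rational surfaces and of pencils on $\PP^1$.

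The first step is to describe $f$ by a pencil. A morphism to $\PP^1_k$ is given by a base-point-free two-dimensional linear system, so $f$ is defined by two sections $s_0,s_1$ of $\MM:=f^*\OO_{\PP^1_k}(1)$ whose zero divisors are fibres $F_0=f^{-1}(p_0)$ and $F_1=f^{-1}(p_1)$. Choosing $p_0,p_1\in\PP^1_k$ general, I may assume $F_0$ and $F_1$ are reduced, disjoint, and such that $D+F_0+F_1$ is still simple normal crossing. By the strong liftability of the smooth projective rational surface $X$ (\cite{xie10, xie}), I fix one lifting $\wt X$ of $X$ over $W_2(k)$ for which the reduced divisor $D+F_0+F_1$ lifts to $\wt D+\wt F_0+\wt F_1\subset\wt X$; the relatively simple normal crossing property of $\wt D$ then follows as in Corollary \ref{6.5} from \cite[Lemmas 8.13, 8.14]{ev} or \cite[Lemma 2.2]{xie}. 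I take $\wt Y=\PP^1_{W_2(k)}$ as the (canonical) lifting of the target.

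The heart of the matter is to lift the pencil itself, and here I would use that a rational surface has $H^1(X,\OO_X)=0$, so that the restriction map $\Pic(\wt X)\ra\Pic(X)$ is injective. Since $F_0$ and $F_1$ are linearly equivalent fibres, the line bundles $\OO_{\wt X}(\wt F_0)$ and $\OO_{\wt X}(\wt F_1)$ have isomorphic reductions on $X$, hence are isomorphic on $\wt X$; write $\wt\MM$ for this common line bundle and $\wt s_0,\wt s_1$ for the sections cutting out $\wt F_0,\wt F_1$. Their common zero locus is $\wt F_0\cap\wt F_1$, whose reduction modulo $p$ is $F_0\cap F_1=\emptyset$; as $\wt X$ and $X$ have the same underlying space, the locus $\wt F_0\cap\wt F_1$ is empty, so by Nakayama the pencil $\langle\wt s_0,\wt s_1\rangle$ is base-point-free. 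It therefore defines a $W_2(k)$-morphism $\wt f:\wt X\ra\PP^1_{W_2(k)}=\wt Y$ whose reduction modulo $p$ is the pencil $\langle s_0,s_1\rangle$, that is, $\wt f|_X=f$.

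The remaining and, I expect, most delicate step is to verify that $\wt f$ is $\wt E$-semistable and that $\wt D$ is adapted to $\wt f$ in the sense of Definitions \ref{1.1} and \ref{1.2}; this is the main obstacle, since it demands that the \'etale-local normal form $h^*y=x_1\cdots x_n$ and all the relatively simple normal crossing conditions persist after lifting. I would check this exactly as in the concluding paragraph of Corollary \ref{6.5}: the semistable local model is \'etale-local and lifts together with its coordinates, while the RSNC conditions on $\wt D$, $\wt D_h$ and $\wt A\cup\wt E$ are inherited from the simple normal crossing property already secured for $\wt D+\wt F_0+\wt F_1$. With $\wt f:(\wt X,\wt D)\ra(\wt Y,\wt E_a)$ so recognised as a lifting of $f:(X,D)\ra(Y,E_a)$ over $W_2(k)$ and $\dim(X/S)=2<p$, Theorem \ref{6.3} applies and delivers $H^i(\PP^1_k,\LL\otimes R^jf_*\omega_X(D))=0$ and $H^i(\PP^1_k,\LL\otimes R^jf_*\omega_X(D_h))=0$ for all $i>0$ and $j\geq 0$.
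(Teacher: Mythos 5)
Your proposal is correct and reduces to Theorem \ref{6.3} exactly as the paper does, but the way you construct the lifting $\wt{f}$ is genuinely different at the key step. The paper lifts a single general fibre $F=f^{-1}(P)$ via strong liftability and then shows that the \emph{complete} linear system $|\wt{F}|$ is base point free by exploiting that $X$ and $\wt{X}$ are obtained from smooth projective toric surfaces by blowing up closed points, so that the convexity argument of Corollary \ref{6.5} (upper convexity of $\psi_{\wt{G}}$, surjectivity of $H^0(\wt{X},\wt{G})\ra H^0(X,G)$) can be imported. You instead lift \emph{two} general fibres $F_0,F_1$ and use $H^1(X,\OO_X)=0$ to get injectivity of $\Pic(\wt{X})\ra\Pic(X)$, hence an identification $\OO_{\wt{X}}(\wt{F}_0)\cong\OO_{\wt{X}}(\wt{F}_1)$, and then observe that the lifted pencil has empty base locus because its support is $F_0\cap F_1=\emptyset$. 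This avoids the toric geometry entirely and is arguably more robust: it would apply verbatim to any strongly liftable smooth projective variety with $H^1(X,\OO_X)=0$ fibred over $\PP^1$. The only loose ends are cosmetic: the isomorphism $\OO_{\wt{X}}(\wt{F}_0)\cong\OO_{\wt{X}}(\wt{F}_1)$ is canonical only up to a unit of $W_2(k)$, so one must rescale $\wt{s}_1$ by a unit lifting the appropriate constant in $k^*$ to ensure the reduction of $[\wt{s}_0:\wt{s}_1]$ is exactly $f$ rather than $f$ composed with an automorphism of $\PP^1_k$; and, as in the paper, the lifting of the reducible divisor $D+F_0+F_1$ is obtained componentwise from Definition \ref{6.4}. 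Both you and the paper defer the verification that $\wt{f}$ is $\wt{E}$-semistable with $\wt{D}$ adapted to the same references (\cite[Lemmas 8.13, 8.14]{ev}, \cite[Lemma 2.2]{xie}), so no additional gap is introduced there.
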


\begin{proof}
Let $P\in\PP^1_k$ be a general point and $F=f^{-1}(P)$ the fiber of $f$.
By \cite[Theorem 1.3]{xie10}, $X$ is strongly liftable, hence there are
a lifting $\wt{X}$ of $X$ and a lifting $\wt{D}+\wt{F}\subset\wt{X}$
of $D+F\subset X$ over $W_2(k)$. Since both $X$ and $\wt{X}$ are birational
to certain smooth projective toric surfaces through a sequence of blow-ups
along some closed points, by a similar argument to the proof of Corollary
\ref{6.5}, we can show that the linear system $|\wt{F}|$ is base point free,
which gives rise to a $W_2(k)$-morphism $\wt{f}:\wt{X}\ra \PP^1_{W_2(k)}$.
By \cite[Lemmas 8.13, 8.14]{ev} or \cite[Lemma 2.2]{xie}, it is easy to
verify that $\wt{f}:(\wt{X},\wt{D})\ra (\PP^1_{W_2(k)},\wt{E}_a)$
is a lifting of $f:(X,D)\ra (\PP^1_k,E_a)$ over $W_2(k)$.
By Theorem \ref{6.3}, we obtain the required vanishings.
\end{proof}

\small

\textsc{School of Mathematical Sciences, Fudan University,
Shanghai 200433, China}

\textit{E-mail address}: \texttt{qhxie@fudan.edu.cn}


\begin{thebibliography}{Xie10}

\bibitem[De72]{de}
P. Deligne,
Th\'{e}orie de Hodge II,
{\it Pub.\ Math.\ IHES}, {\bf 40} (1972), 5--57.

\bibitem[DI87]{di}
P. Deligne, L. Illusie,
Rel\`{e}vements modulo $p^2$ et d\'{e}composition du complexe de de Rham,
{\it Invent.\ Math.}, {\bf 89} (1987), 247--270.

\bibitem[EV92]{ev}
H. Esnault, E. Viehweg,
{\it Lectures on vanishing theorems},
DMV Seminar, vol.\ {\bf 20}, Birkh\"{a}user, 1992.

\bibitem[Fu93]{fu}
W. Fulton,
{\it Introduction to toric varieties}, Ann.\ of Math.\ Stud.,
vol.\ {\bf 131}, Princeton Univ.\ Press, 1993.

\bibitem[Il90]{il90}
L. Illusie,
R\'eduction semi-stable et d\'ecomposition de complexes
de de Rham \'a coefficients,
{\it Duke Math.\ J.}, {\bf 60} (1990), 139--185.

\bibitem[Il96]{il96}
L. Illusie,
Frobenius et d\'eg\'en\'erescence de Hodge,
in J. Bertin, J.-P. Demailly, L. Illusie and C. Peters,
{\it Introduction \`a la Th\'eorie de Hodge},
Panoramas et Synth\`eses, vol.\ {\bf 3},
Socit\'e de Mathmatiques de France, Marseilles,
113-168, 1996.

\bibitem[Ka70]{katz}
N. Katz,
Nilpotent connections and the monodromy theorem,
{\it Pub.\ Math.\ IHES}, {\bf 39} (1970), 175--232.

\bibitem[Ko95]{ko95}
J. Koll\'{a}r,
{\it Shafarevich maps and automorphic forms},
Princeton Univ.\ Press, 1995.

\bibitem[Xie10]{xie10}
Q. Xie,
Strongly liftable schemes and the Kawamata-Viehweg vanishing
in positive characteristic,
{\it Math.\ Res.\ Lett.}, {\bf 17} (2010), 563--572.

\bibitem[Xie11]{xie}
Q. Xie,
Strongly liftable schemes and the Kawamata-Viehweg vanishing
in positive characteristic II,
{\it Math.\ Res.\ Lett.}, {\bf 18} (2011), 315--328.

\end{thebibliography}
\end{document}